\documentclass{amsart}
\usepackage{amsmath,amssymb,amsthm,amscd}
\newtheorem{theo}{Theorem}

\newtheorem{lemm}[theo]{Lemma}
\newtheorem{prop}[theo]{Proposition}
\newtheorem{cor}[theo]{Corollary}

\theoremstyle{definition}
\newtheorem{rema}[theo]{Remark}

\begin{document}

\title[Transitivity of endomorphisms]{Approximate unitary equivalence of finite index endomorphisms of the AFD factors}
\author{Koichi Shimada}
\email{shimada@ms.u-tokyo.ac.jp}
\address{Department of Mathematical Sciences
University of Tokyo, Komaba, Tokyo, 153-8914, Japan}
\date{}
\begin{abstract}
We consider  two finite index endomorphisms  $\rho$, $\sigma $ of any AFD factor $M$.
We characterize the condition for  there being a sequence $\{ u_n\}$ of unitaries of the factor $M$ with $\mathrm{Ad}u_n \circ \rho \to \sigma $.
The characterization is given by using the canonical extension of endomorphisms, which is introduced by Izumi.
Our result is a generalization of the characterization of approximate innerness of endomorphisms of the AFD factors, obtained by Kawahiashi--Sutherland--Takesaki and Masuda--Tomatsu.
Our proof, which does not depend on the types of factors, is based on recent development on the Rohlin property of flows on von Neumann algebras.
\end{abstract}

\maketitle

\section{Introduction}
In this paper, we characterize the approximate innerness of the difference of two finite index endomorphisms of the AFD factors of type III (Theorem \ref{main}).
More precisely, for  two finite index endomorphisms $\rho$ ,$ \sigma $ of any AFD factor $M$, we give a good necessary and sufficient condition for there being a sequence $\{ u_n\}$ of  unitaries of $M$ with $\mathrm{Ad}u_n \circ \rho \to \sigma $ as $n\to \infty $ in the sense of Masuda--Tomatsu \cite{MT1}.
First of all, we explain the reason why we are interested in this topic.
The reason is that this result should be useful for classifying group actions.
It has been important to classify group actions on von Neumann algebras  up to cocycle conjugacy.
Since a remarkable work of Connes \cite{C}, classification of group actions on the AFD factors has greatly been developed by many researchers.
In particular, the actions of discrete amenable groups on the AFD factors are completely classified  (See Jones \cite{J}, Ocneanu \cite{O}, Sutherland--Takesaki \cite{ST}, Kawahigashi--Takesaki--Sutherland \cite{KwST} and Katayama--Sutherland--Takesaki \cite{KtST}).
It is interesting to note that although there are many different actions up to conjugacy, they are clearly classified when we ignore the difference of  cocycle conjugacy.
One of the next problems is to classify actions of continuous groups.
Among them, classification of actions of compact groups is considered to be relatively easy because the dual of a compact group is discrete.
In fact, actions of compact abelian groups on the AFD factors have  completely  been classified by using this observation (See Jones--Takesaki \cite{JT} and Kawahigashi--Takesaki \cite{KwT}).
However, when it comes to classifying actions of non-abelian compact groups, the problem is much more difficult.
One of the reasons is that the dual action of an action of a non-abelian compact group is a collection of  endomorphisms, not of automorphisms.
Hence in order to proceed with classifying actions, it is important to understand the properties of endomorphisms.
In the proof of classification theorems of group actions, whether the difference of two actions is approximated by inner automorphisms or not is very important.
Hence  we should characterize the approximate innerness of the difference of two endomorphisms of the AFD factors.

In this paper, we characterize  the approximate innerness of the difference of two endomorphisms in the sense of Masuda--Tomatsu \cite{MT1} (Theorem \ref{main}).
In Masuda--Tomatsu \cite{MT3}, they propose a conjecture of the complete invariant for actions of discrete Kac algebras on the AFD factors (Conjecture 8.2  of \cite{MT2}).
The dual of minimal actions of compact groups are ones of them.
Our main theorem implies that if two actions of discrete Kac algebras on the AFD factors of type III have the same invariants, the difference of these two actions is approximately inner (See Problem 8.3 and the preceding argument to that problem of \cite{MT3}).
Our main theorem characterizes when one endomorphism transits to another endomorphism.
Hence the theorem may also be seen as a kind of endomorphism counterpart of the main theorem of Haagerup--St\o rmer \cite{HS}, which characterizes when one normal state of a von Neumann algebra transits to another normal state.
It is important to note that our main theorem is a generalization of Theorem 1 (1) of Kawahigashi--Sutherland--Takesaki \cite{KwST} and Theorem 3.15 of Masuda--Tomatsu \cite{MT1}.
The proof of our theorem is based on recent development on the Rohlin property of flows on von Neumann algebras, which does not depend on the types of the AFD factors.
Our method is also applicable to the characterization of the central triviality of automorphisms (Theorem 1. (2) of Kawahigashi--Sutherland--Takesaki \cite{KwST}).
In appendix, we give another proof of the characterization of the central triviality, which does not depend on the types of the AFD factors.

\textbf{Acknowledgment}
The author thanks Professor Reiji Tomatsu for introducing him to this topic and for giving him useful comments and Professor Toshihiko Masuda for pointing out a mistake in the first version of this paper.
The author is also thankful to Professor Yasuyuki Kawahigashi, who is his adviser, for his useful comments on the presentation of this work.
The author is supported by Research Fellowships of the Japanese Society for the Promotion of Science for Young Scientists No.26-6590. 
This work is also supported by the Program for Leading Graduate Schools, MEXT, Japan.

\section{Preliminaries}
\subsection{Notations}
Let $M$ be a von Neumann algebra. For a normal positive linear functional $\psi $ of $M$ and $x\in M$, set
\[ \| x\| _{\psi}:=\sqrt{\psi (x^*x)},\]
\[ \| x\| _{\psi}^\sharp :=\sqrt{\frac{\psi (x^*x)+\psi (xx^*)}{2}}.\]
\begin{lemm}
\label{inequality}
Let $\lambda $ be a $\sigma$-weakly continuous linear functional of a von Neumann algebra $M$ and $\lambda =\psi v$ be its polar decomposition.
Then we have 

\[ \| \lambda a\| \leq \psi (vaa^*v^*)^{1/2}\| \lambda \|^{1/2} , \]
\[ \| a\lambda \| \leq \psi (a^*a)^{1/2}\| \lambda \|^{1/2} \]
for any $a\in M$.
\end{lemm}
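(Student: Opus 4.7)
The plan is to derive both inequalities directly from the Cauchy--Schwarz inequality for the positive normal functional $\psi$, namely $|\psi(AB)|^{2}\le \psi(AA^{*})\psi(B^{*}B)$, together with the basic identities from the polar decomposition, in particular $\psi(1)=\|\psi\|=\|\lambda\|$ and the fact that $v$ is a partial isometry (so $vv^{*}\le 1$). Throughout, I use the conventions $(\lambda a)(x)=\lambda(ax)$, $(a\lambda)(x)=\lambda(xa)$, and $\lambda(x)=\psi(vx)$.

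For the first inequality, I would compute, for any $x\in M$ with $\|x\|\le 1$,
\[ (\lambda a)(x)=\lambda(ax)=\psi(vax), \]
and then apply Cauchy--Schwarz with the factorisation $vax=(va)\cdot x$:
\[ |\psi(vax)|^{2}\le \psi(vaa^{*}v^{*})\,\psi(x^{*}x). \]
Since $x^{*}x\le \|x\|^{2}\cdot 1\le 1$ and $\psi(1)=\|\lambda\|$, we get $\psi(x^{*}x)\le \|\lambda\|$, and taking the supremum over $\|x\|\le 1$ yields the first bound.

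For the second inequality the argument is symmetric but with the roles swapped. I compute $(a\lambda)(x)=\lambda(xa)=\psi(vxa)$, and factor as $vxa=(vx)\cdot a$ to get
\[ |\psi(vxa)|^{2}\le \psi(vxx^{*}v^{*})\,\psi(a^{*}a). \]
The point now is to bound $\psi(vxx^{*}v^{*})$ by $\|\lambda\|$ when $\|x\|\le 1$: since $xx^{*}\le \|x\|^{2}\cdot 1$, we have $vxx^{*}v^{*}\le \|x\|^{2}\,vv^{*}\le 1$, so $\psi(vxx^{*}v^{*})\le \psi(1)=\|\lambda\|$. Taking the supremum over $\|x\|\le 1$ gives the second bound.

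There is no real obstacle here; the only care needed is the choice of which side to apply Cauchy--Schwarz on so that the $va$ term (respectively $a$) ends up inside $\psi(\,\cdot\,)$ in the form written in the statement, and the verification that $\psi(1)=\|\lambda\|$ (which is part of the definition of the polar decomposition). Everything else is routine operator-inequality manipulation.
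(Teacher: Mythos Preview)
Your proof is correct and follows exactly the same approach as the paper: apply the Cauchy--Schwarz inequality for $\psi$ to $\psi(vax)$ (respectively $\psi(vxa)$) and use $\psi(1)=\|\lambda\|$. The paper only writes out the first inequality and says the second ``is shown in a similar way''; your write-up simply makes that symmetric step explicit, including the observation $vxx^{*}v^{*}\le vv^{*}\le 1$.
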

\begin{proof}
By Cauchy--Schwarz's inequality, for $x\in M$, we have
\begin{align*}
|\lambda a(x)|&=|\psi (vax)| \\
               &\leq \psi (vaa^*v^*)^{1/2}\psi (x^*x)^{1/2} \\
               &\leq \psi (vaa^*v^*)^{1/2} \| \lambda \|  ^{1/2} \| x\| . 
\end{align*}
The latter inequality is shown in a similar way.
\end{proof}

\subsection{A topology of semigroups of endomorphisms}
Let $M$ be a factor of type $\mathrm{III}$. 
Let $\mathrm{End}(M)_0$ be the set of all finite index endomorphisms $\rho$ of $M$. 
Let $d(\rho ) $ be the square root of the minimal index of $M\supset \rho (M)$ and $E_\rho $ be the minimal expectation from $M$ to $\rho (M)$. Set $\phi _\rho :=\rho ^{-1} \circ E_\rho$. 
In Masuda--Tomatsu \cite{MT1}, a topology of $\mathrm{End} (M)_0$ is introduced in the following way.
We have
\[ \rho _i \to \rho \]
if, by definition, $\| \psi \circ \phi _{\rho _i} -\psi \circ \phi _\rho \| \to 0$ for any $\psi \in M_*$. 
\subsection{Canonical extension of endomorphisms}
Let $\varphi $ be a normal faithful semifinite weight of $M$ and $\sigma ^\varphi$ be the group of modular automorphisms of $\varphi$.
In Izumi \cite{I}, an extension $\tilde{\rho}$ of $\rho \in \mathrm{End}(M)_0$ 
on the continuous core $\tilde{M}:=M\rtimes _{\sigma ^\varphi} \mathbf{R}$ is introduced in the following way.
We have
\[ \tilde {\rho} (x\lambda _t^{\sigma ^\varphi})=d(\rho )^{it}\rho (x) [D\varphi \circ \phi _{\rho} :D\varphi ]_t \lambda _t^{\sigma^\phi}\]
for $t\in \mathbf{R}$, $x\in M$, where $[D\varphi \circ \phi _{\rho}:D\varphi ]_t$ is the Connes cocycle between $\varphi \circ \phi _{\rho}$ and $\varphi$. 
This extension does not depend on the choice of $\varphi$ under a specific identification (See Theorem 2.4 of Izumi \cite{I}).
The extension $\tilde{\rho}$ is said to be the canonical extension of $\rho$.

In Lemma 3.5 of Masuda--Tomatsu \cite{MT1}, it is shown that there exists a left inverse $\phi _{\tilde{\rho}}$ of $\tilde{\rho}$ satisfying
\[ \phi _{\tilde{\rho}} (x\lambda ^\varphi _t )=d(\rho )^{-it} \phi _\rho (x[D\phi :D\phi \circ \phi _\rho ]_t )\lambda _t^\varphi \]
for $x\in M$, $t\in \mathbf{R}$.

\section{The main theorem}
The main theorem of this paper is the following.
\begin{theo}
\label{main}
Let $\rho$ , $\sigma $ be endomorphisms of an AFD factor $M$ of type $\mathrm{III}$ with $d(\rho ), d(\sigma )<\infty$. 
Then the following two conditions are equivalent.

\bigskip

\textup{(1)} We have  $\phi _{\tilde{\rho}}\circ \theta _{-\log (d(\rho) /d(\sigma ))}|_{\mathcal{Z}(\tilde{M})}=\phi _{\tilde{\sigma }}|_{\mathcal{Z}(\tilde{M})}$. 
 
\textup{(2)} There exists a sequence $\{ u_n\}$ of unitaries of $M$ with $\mathrm{Ad}u_n \circ \rho \to \sigma $ as $n\to \infty$.
\end{theo}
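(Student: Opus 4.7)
The plan is to prove the two implications separately; (2)$\Rightarrow$(1) is the easier necessity direction, while (1)$\Rightarrow$(2) is the main content, using recent Rohlin-type techniques for flows.

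For (2)$\Rightarrow$(1), the key facts are that the canonical extension is functorial under inner perturbations, $\widetilde{\mathrm{Ad}\,u\circ\rho} = \mathrm{Ad}\,u \circ \tilde\rho$ for $u\in M$, and that the minimal index is inner-invariant, so $d(\mathrm{Ad}\,u\circ\rho)=d(\rho)$. Since $u\in M$ acts trivially on $\mathcal{Z}(\tilde M)$, the restriction $\phi_{\widetilde{\mathrm{Ad}\,u_n\circ\rho}}|_{\mathcal{Z}(\tilde M)}$ equals $\phi_{\tilde\rho}|_{\mathcal{Z}(\tilde M)}$ for every $n$. The convergence $\mathrm{Ad}\,u_n\circ\rho\to\sigma$ forces $d(\rho)=d(\sigma)$, so the scaling automorphism $\theta_{-\log(d(\rho)/d(\sigma))}$ is trivial, and the Masuda--Tomatsu topology — together with the explicit Connes-cocycle formula for $\phi_{\tilde\rho}$ recalled in the preliminaries — suffices to promote the convergence to the equality $\phi_{\tilde\rho}|_{\mathcal{Z}(\tilde M)} = \phi_{\tilde\sigma}|_{\mathcal{Z}(\tilde M)}$.

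For (1)$\Rightarrow$(2), the strategy is to pass to the continuous core $\tilde M$, a semifinite AFD factor with the trace-scaling flow $\theta$, and to exploit the Rohlin property of $\theta$. Condition (1) says precisely that the module-type invariants of $\tilde\rho$ and $\tilde\sigma$ on $\mathcal{Z}(\tilde M)$ agree after the natural index normalization. Given a finite $F\subset M_*$ and $\varepsilon>0$, I would (a) use the Rohlin property of $\theta$ to produce a projection partition in $\mathcal{Z}(\tilde M)$ that is approximately $\theta$-invariant on the relevant scale; (b) combine it with the module equality to build an approximate intertwiner $v\in\tilde M$ between $\tilde\rho$ and $\tilde\sigma$ modulo $F$ and $\varepsilon$; (c) use the trace on $\tilde M$ and the Connes-cocycle structure to show that $v$ can be replaced by a unitary $u_n \in M$; (d) iterate to produce the desired sequence.

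The main obstacle will be step (c): producing an approximate intertwiner inside $\tilde M$ is comparatively standard given (a) and a 2-cohomology argument on the Rohlin tower, but forcing this intertwiner to lie (up to small error) in the subfactor $M\subset\tilde M$ is the delicate point. The required estimates should come from combining the explicit formula for $\tilde\rho(x\lambda_t^{\sigma^\varphi})$, the left-inverse formula for $\phi_{\tilde\rho}$, and the norm bounds of Lemma \ref{inequality}, so that the part of $v$ sitting outside $M$ is controlled by a $\sharp$-norm error that vanishes as $n\to\infty$.
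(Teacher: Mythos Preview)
Your sketch of (2)$\Rightarrow$(1) contains a factual error: the convergence $\mathrm{Ad}\,u_n\circ\rho\to\sigma$ does \emph{not} force $d(\rho)=d(\sigma)$. The Masuda--Tomatsu topology does not see the index; indeed Lemma~\ref{40} produces approximately inner endomorphisms of any prescribed index, so limits of automorphisms can have $d>1$. The paper's argument instead invokes the continuity of the \emph{normalized} canonical extension $\rho\mapsto\phi_{\tilde\rho}\circ\theta_{-\log d(\rho)}$ (Theorem~3.8 of \cite{MT1}); restricting to $\mathcal{Z}(\tilde M)$ then gives condition (1) with the $\theta_{-\log(d(\rho)/d(\sigma))}$ term intact, not trivialized.

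For (1)$\Rightarrow$(2), your overall philosophy (work on the core, use the Rohlin property of $\theta$) matches the paper, but two of the key mechanisms are misidentified. First, your step (a) places the Rohlin tower in $\mathcal{Z}(\tilde M)$; this collapses in type $\mathrm{III}_1$, where $\mathcal{Z}(\tilde M)=\mathbf{C}$. The paper uses the Rohlin property in the $\theta$-equicontinuous central sequence algebra $\tilde M_{\theta,\omega}$ (a unitary $v$ with $\theta_t(v)=e^{-iRt}v$), and---crucially---applies it only \emph{after} the approximation $\mathrm{Ad}\,u_n\circ\tilde\rho\to\tilde\sigma$ on $\tilde M$ has already been obtained by purely semifinite techniques (fiberwise trace comparison over $\mathcal{Z}(\tilde M)$, Lemmas~\ref{kkey}--\ref{key}). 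The Rohlin unitary is then used to \emph{average} those $u_n$ into approximately $\theta$-invariant unitaries, not to build the intertwiner in the first place.

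Second, your step (c)---pushing the intertwiner into $M$ by $\sharp$-norm control of the part of $v$ outside $M$ via the Connes-cocycle formula and Lemma~\ref{inequality}---is not how the descent goes, and there is no evident estimate that makes it work. The paper never tries to approximate the $u_n$ by elements of $M$ directly. Instead it passes to the double crossed product: approximate $\theta$-invariance of $u_n\in\tilde M$ yields $\mathrm{Ad}\,u_n\circ\tilde{\tilde\rho}\to\tilde{\tilde\sigma}$ on $\tilde M\rtimes_\theta\mathbf{R}$, and Takesaki duality identifies $\tilde{\tilde\rho}$ with $\rho\otimes\mathrm{id}_{B(L^2\mathbf{R})}$, from which one reads off unitaries in $M$ (Lemmas~\ref{18} and \ref{22.5}). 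This duality step is the substantive idea your outline is missing.
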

As a Corollary, we have the following result.
\begin{cor}
Let $M$ be an AFD factor and $R_0$ be the AFD factor of type $\mathrm{II}_1$.
Take endomorphisms $\rho _1$, $\rho _2\in \mathrm{End}(M)_0$ and $\sigma _1$, $\sigma _2\in ^mathrm{End}(R_0)_0$. 
Then the following two conditions are equivalent.

\bigskip

\textup{(1)} There exists a sequence of unitaries $\{u_n\}$ of $M \otimes R_0$ with $\mathrm{Ad}u_n \circ (\rho _1\otimes \sigma _1) \to \rho _2\otimes \sigma _2$ as $n\to \infty$.

\textup{(2)} There exists a sequence of unitaries $\{v_n\}$ of $M$ with $\mathrm{Ad}v_n \circ \rho _1 \to \rho _2$ as $n \to \infty$.
\end{cor}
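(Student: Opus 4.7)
The plan is to apply Theorem \ref{main} to each of the pairs $(\rho_1,\rho_2)$ and $(\rho_1\otimes\sigma_1,\rho_2\otimes\sigma_2)$, and to verify that condition (1) of the main theorem is equivalent in these two instances, so that the corollary follows from two applications of Theorem \ref{main}. I work under the assumption that $M$ is of type III, so that $M\otimes R_0$ is again an AFD factor of type III to which Theorem \ref{main} applies directly. Fix a faithful normal semifinite weight $\varphi$ on $M$ and the tracial state $\tau$ on $R_0$; since $\sigma^\tau=\mathrm{id}$, the modular group of $\varphi\otimes\tau$ is $\sigma^\varphi\otimes\mathrm{id}$, so $\widetilde{M\otimes R_0}\cong\tilde M\otimes R_0$ with dual scaling flow $\theta^M\otimes\mathrm{id}$. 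In particular $\mathcal{Z}(\widetilde{M\otimes R_0})=\mathcal{Z}(\tilde M)$, and the scaling flows agree on this common center.

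The next step is to compute $\phi_{\widetilde{\rho\otimes\sigma}}$ on the center for a general pair $\rho\in\mathrm{End}(M)_0$, $\sigma\in\mathrm{End}(R_0)_0$. Because the minimal expectation onto a finite-index subfactor of a type $\mathrm{II}_1$ factor preserves the unique trace, $\tau\circ\phi_\sigma=\tau$, and hence
\[[D(\varphi\otimes\tau)\circ\phi_{\rho\otimes\sigma}:D(\varphi\otimes\tau)]_t=[D\varphi\circ\phi_\rho:D\varphi]_t\otimes 1.\]
Plugging this into the formula for the left inverse recalled in the preliminaries and recognizing that the extra phase $d(\sigma)^{-it}$ on $\lambda^\varphi_t$ is precisely the action of $\theta_{\log d(\sigma)}$ yields the key identity
\[\phi_{\widetilde{\rho\otimes\sigma}}\big|_{\mathcal{Z}(\tilde M)}=\theta_{\log d(\sigma)}\circ\phi_{\tilde\rho}\big|_{\mathcal{Z}(\tilde M)}.\]

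Substituting this identity for each $i\in\{1,2\}$ into condition (1) of Theorem \ref{main} for $(\rho_1\otimes\sigma_1,\rho_2\otimes\sigma_2)$, and using that $\theta$ commutes on the center with each $\phi_{\tilde\rho_i}$, the shifts by $\log d(\sigma_i)$ cancel against the corresponding contributions in $-\log(d(\rho_1\otimes\sigma_1)/d(\rho_2\otimes\sigma_2))$. The condition then collapses to condition (1) of Theorem \ref{main} for $(\rho_1,\rho_2)$, and the corollary follows from two applications of the main theorem. The principal obstacle is establishing the key identity in the middle paragraph; once this is in hand, the cancellation in the final step is purely formal.
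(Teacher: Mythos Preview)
Your argument is correct and reaches the conclusion by the same mechanism as the paper---two applications of Theorem \ref{main} together with the identification $\mathcal{Z}(\widetilde{M\otimes R_0})\cong\mathcal{Z}(\tilde M)$---but the route is genuinely different in one respect. The paper first invokes the (known) fact that every finite-index endomorphism of the hyperfinite $\mathrm{II}_1$ factor is approximately inner, which lets it replace $\sigma_1,\sigma_2$ by $\mathrm{id}_{R_0}$ at the outset; after that reduction one has $d(\rho_i\otimes\mathrm{id})=d(\rho_i)$ and $\phi_{\widetilde{\rho_i\otimes\mathrm{id}}}|_{\mathcal{Z}}=\phi_{\tilde\rho_i}|_{\mathcal{Z}}$, so condition (1) of Theorem \ref{main} literally coincides for the two pairs. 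You instead keep $\sigma_1,\sigma_2$ arbitrary and compute directly that $\phi_{\widetilde{\rho\otimes\sigma}}|_{\mathcal{Z}}=\theta_{\log d(\sigma)}\circ\phi_{\tilde\rho}|_{\mathcal{Z}}$, then check that the extra shifts by $\log d(\sigma_i)$ cancel against the index quotient. Your approach is slightly longer but more self-contained, since it does not appeal to the approximate innerness result for $R_0$; the paper's approach is quicker once that fact is granted. Either way the substance is the same: the center of the core of $M\otimes R_0$ sees only the $M$-part, so Theorem \ref{main} transfers the problem from $M\otimes R_0$ back to $M$.
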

\begin{proof}
Since $\sigma _1$ and $\sigma _2 $ are approximately inner, we may assume that $\sigma _1=\sigma _2=\mathrm{id}_{R_0}$.
By the identification $\mathcal{Z}((M\otimes R_0)\rtimes _{\sigma ^\varphi \otimes \mathrm{id}_{R_0}}\mathbf{R})\cong \mathcal{Z}((M\rtimes _{\sigma ^\varphi}\mathbf{R})\otimes R_0) \cong \mathcal{Z}(M\rtimes _{\sigma ^\varphi }\mathbf{R})$ by
\[ (x\otimes y)\lambda _t^{\sigma ^\varphi \otimes \mathrm{id}_{R_0}}\mapsto (x\lambda _t^{\sigma ^\varphi})\otimes y,\]
we have $\phi _{\rho _i \otimes \mathrm{id}_{R_0}}=\phi _{\rho_i}$ for $i=1,2$.
We also have $\d(\rho _i \otimes \mathrm{id}_{R_0})=d(\rho _i)$.
Hence by Theorem \ref{main}, conditions (1) and (2) are equivalent.
\end{proof}
Note that this corollary would be quite difficult to show without Theorem \ref{main} (See also Section 3 of Connes \cite{C2}).

Theorem \ref{main} should also be useful for classifying actions of compact groups on the AFD factors of type $\mathrm{III}$.
Popa--Wassermann \cite{PW} and Masuda--Tomatsu \cite{MT2} showed that any compact group has only one minimal action on the AFD factor of type $\mathrm{II}_1$, up to conjugacy.  
One of the next problems is to classify actions of compact groups on the AFD factors of type III.
In Masuda--Tomatsus \cite{MT3} and \cite{MT4}, they are trying to solve this problem, and some partial answers to this problem are obtained (Theorems A, B of \cite{MT3} and Theorem 2.4 of \cite{MT4}).
However, still the problem has not been solved completely.
In Masuda--Tomatsu \cite{MT3}, a conjecture about this classification problem is proposed (Conjecture 8.2).
Our main theorem implies that if two actions of discrete Kac algebras on the AFD factors of type III have the same invariants, the difference of these two actions is approximately inner (See Problem 8.3 and the preceding argument to that problem of Masuda--Tomatsu \cite{MT3}).
In order to classify group actions, whether the difference of two actions is approximately inner or not is very important.
Kawahigashi--Sutherland--Takesaki \cite{KwST} and Masuda--Tomatsu \cite{MT1} characterize the approximate innerness of endomorphisms under such a motivation.
Theorem \ref{main} is a generalization of their results.

In the following, we will show Theorem \ref{main}. 
Implication \textup{(2)}$\Rightarrow $ \textup{(1)} is shown easily by using known results.

\bigskip

\textit{Proof of implication \textup{(2)} $\Rightarrow$ \textup{(1)} of Theorem \ref{main}.}
This is shown by the same argument as that of the proof of implication (1) $\Rightarrow $ (2) of Theorem 3.15 of \cite{MT1}.
Assume that we have $\mathrm{Ad}u_n \circ \rho \to \sigma $ as $n \to \infty$.
Then by the continuity of normalized canonical extension (Theorem 3.8 of Masuda--Tomatsu \cite{MT1}), we have
\[ \phi _{\tilde{\rho}}\circ \theta _{-\log d(\rho)} \circ \mathrm{Ad}u_n^*(x) \to \phi _{\tilde{\sigma }} \circ \theta _{-\log d(\sigma )} (x)\]
in the strong* topology for any $x\in \tilde{M}$.
Hence we have
\[ \phi _{\tilde{\rho}}\circ \theta _{-\log (d(\rho )/d(\sigma ))}|_{\mathcal{Z}(\tilde{M})} =\phi _{\tilde{\sigma }}|_{\mathcal{Z}(\tilde{M})}.\]
\qed

\bigskip

In the following, we will show the opposite implication. 
Our strategy is to reduce the problem to that of endomorphisms on semifinite von Neumann algebras.
In order to achieve this, in Kawahigashi--Sutherland--Takesaki \cite{KwST} and Masuda--Tomatsu \cite{MT1}, they have used discrete decomposition theorems (See Connes \cite{C}).
However, in our situation, the centers of the images of caonical extensions may not coincide with that of $\tilde{M}$.
This makes the problem difficult.
It seems that Corollary 4.4 of Izumi \cite{I} means that it is difficult to show Theorem \ref{main} by the same strategy as those in them.
Instead, we will use continuous decomposition.
We also note that our method gives a proof of Theorem (1) of Kawahigashi--Sutherland--Takesaki \cite{KwST} which does not depend on the types of the AFD factors.

\section{Approximation on the continuous core}
In order to prove implication \textup{(1)} $\Rightarrow$ \textup{(2)} of  Theorem \ref{main}, we need to prepare some lemmas.
We first show the implication when $\phi _{\tilde{\rho}}=\phi _{\tilde{\sigma}}$. 
Until the end of the proof of Lemma \ref{22.5}, we always assume that $d(\rho)=d(\sigma)$ and $\phi _{\tilde{\rho}}=\phi _{\tilde{\sigma}}$.
Choose a dominant weight $\varphi $ of $M$ (For the definition of dominant weights, see Definition II.1.2. and Theorem II.1.3. of Connes--Takesaki \cite {CT}).
Then by Lemma 2.3 (3) of Izumi \cite{I}, it is possible to choose unitaries $u$ and $ v$ of $M$ so that  $ (\varphi , \mathrm{Ad} u\circ \rho )$ and $(\varphi , \mathrm{Ad}v \circ \sigma )$ are invariant pairs (See Definition 2.2 of Izumi \cite{I}). 
More precisely, we have
\[ \varphi \circ \mathrm{Ad}u \circ \rho =d(\rho ) \varphi , \ \varphi \circ E_{\mathrm{Ad}u\circ \rho} =\varphi ,\]
\[ \varphi \circ \mathrm{Ad} v \circ \sigma =d(\sigma ) \varphi , \ \varphi \circ E_{\mathrm{Ad}v\circ \sigma} =\varphi .\]
By replacing $\rho$ by $\mathrm{Ad} u \circ  \rho $ and $\sigma $ by $\mathrm{Ad}v\circ \sigma $ respectively, we may assume that $(\varphi , \rho)$ and $(\varphi , \sigma )$ are invariant pairs. 
In the rest of this paper, we identify $\tilde{M}$ with $M\rtimes _{\sigma ^\varphi}\mathbf{R}$.
Let $h$ be a positive self-adjoint operator affiliated to $\tilde{M}$ satisfying $h^{-it}=\lambda _t^\varphi$. 
Let $\tau$ be a trace of $\tilde{M}$ defined by $\hat{\varphi }(h\cdot)$.
\begin{lemm}
\label{expectation}
For $\rho \in \mathrm{End}(M)_0$, we have $\phi _{\tilde{\rho}}=\tilde{\rho}^{-1} \circ E_{\tilde{\rho}}$, where $E_{\tilde{\rho}}$ is the conditional expectation with respect to $\tau$.
\end{lemm}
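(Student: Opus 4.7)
The plan is to exploit how drastically the invariant pair assumption simplifies the canonical extension, build a hands-on conditional expectation, and then use the uniqueness of the trace-preserving expectation.

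First I would unpack what the invariance $\varphi\circ\rho=d(\rho)\varphi$ and $\varphi\circ E_\rho=\varphi$ force. A direct computation gives $\varphi\circ\phi_\rho=d(\rho)^{-1}\varphi$, so the Connes cocycle $[D\varphi\circ\phi_\rho:D\varphi]_t$ is the scalar $d(\rho)^{-it}$. Plugging this into the defining formulas for $\tilde{\rho}$ and $\phi_{\tilde{\rho}}$, the $d(\rho)^{\pm it}$ factors cancel and we obtain the clean formulas $\tilde{\rho}(x\lambda_t^\varphi)=\rho(x)\lambda_t^\varphi$ and $\phi_{\tilde{\rho}}(x\lambda_t^\varphi)=\phi_\rho(x)\lambda_t^\varphi$ for $x\in M$, $t\in\mathbf{R}$. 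In particular $\tilde{\rho}(\tilde{M})$ is generated by $\rho(M)$ together with $\{\lambda_t^\varphi\}_{t\in\mathbf{R}}$, i.e.\ $\tilde{\rho}(\tilde{M})=\rho(M)\rtimes_{\sigma^\varphi}\mathbf{R}$.

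Next I would introduce the candidate expectation $\Psi\colon\tilde{M}\to\tilde{\rho}(\tilde{M})$ defined on the generators by $\Psi(x\lambda_t^\varphi):=E_\rho(x)\lambda_t^\varphi$. The condition $\varphi\circ E_\rho=\varphi$ together with Takesaki's theorem implies that $\rho(M)$ is globally $\sigma^\varphi$-invariant and that $E_\rho$ commutes with $\sigma_t^\varphi$ for every $t$. Consequently $E_\rho$ extends to a normal conditional expectation from $M\rtimes_{\sigma^\varphi}\mathbf{R}$ onto $\rho(M)\rtimes_{\sigma^\varphi}\mathbf{R}$, and this extension is precisely the map $\Psi$ written above.

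The heart of the lemma is then to show $\Psi=E_{\tilde{\rho}}$, which, since $\tau$ is a trace, is equivalent to checking that $\Psi$ preserves $\tau$. Here I would use two facts: (i) because $\lambda_t^\varphi\in\tilde{\rho}(\tilde{M})$ and $h^{-it}=\lambda_t^\varphi$, the (unbounded) element $h$ is affiliated with $\tilde{\rho}(\tilde{M})$, so the bimodule property of $\Psi$ gives $\Psi(hy)=h\Psi(y)$ (rigorously via bounded spectral cut-offs of $h$); (ii) the dual weight restricted to $M$ coincides with $\varphi$, and combined with $\varphi\circ E_\rho=\varphi$ and the standard formula for $\hat{\varphi}$ on the Fourier expansion, we obtain $\hat{\varphi}\circ\Psi=\hat{\varphi}$. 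Stringing these together yields
\[\tau\circ\Psi(y)=\hat{\varphi}(h\Psi(y))=\hat{\varphi}(\Psi(hy))=\hat{\varphi}(hy)=\tau(y).\]
Hence $\Psi=E_{\tilde{\rho}}$, and finally $\tilde{\rho}^{-1}\circ E_{\tilde{\rho}}(x\lambda_t^\varphi)=\tilde{\rho}^{-1}(E_\rho(x)\lambda_t^\varphi)=\phi_\rho(x)\lambda_t^\varphi=\phi_{\tilde{\rho}}(x\lambda_t^\varphi)$, proving the lemma.

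The one delicate step is the justification that the unbounded affiliated operator $h$ genuinely commutes past $\Psi$, i.e., the identity $\Psi(hy)=h\Psi(y)$ at the level of weights rather than bounded operators. I would expect to handle this by truncating via spectral projections $e_n=\chi_{[1/n,n]}(h)$ of $h$, noting $e_n\in\tilde{\rho}(\tilde{M})$, and taking limits on elements $y$ in the Tomita algebra of $\tau$. Everything else is essentially assembling the invariance inputs and applying Takesaki's theorem.
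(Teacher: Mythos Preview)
Your proposal is correct and follows essentially the same approach as the paper: both reduce the cocycle to the scalar $d(\rho)^{-it}$ via the invariant pair assumption, identify $\tilde{\rho}\circ\phi_{\tilde{\rho}}$ with the map $x\lambda_t^\varphi\mapsto E_\rho(x)\lambda_t^\varphi$, and then argue this map is the $\tau$-preserving expectation onto $\tilde{\rho}(\tilde{M})$. The only difference is packaging: the paper cites an argument of Longo for the trace-preservation step, whereas you spell it out directly via Takesaki's theorem and the affiliation of $h$ with $\tilde{\rho}(\tilde{M})$.
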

\begin{proof}
For $x\in M$ and $t\in \mathbf{R}$, we have
\begin{align*}
\tilde{\rho} \circ \phi _{\tilde{\rho}} (x\lambda _t^\varphi ) &= \tilde{\rho } (d(\rho )^{-it} \phi _{\rho} (x[D\varphi : D\varphi \circ \phi _\rho ]_t ) \lambda _t^\varphi ) \\
                                                               &=d(\rho )^{it} d(\rho )^{-it} \rho (\phi _\rho (x[D\varphi :D\varphi \circ \phi _\rho ]_t ))[D\varphi \circ \phi _\rho : D\varphi ]_t \lambda _t^\varphi \\
                                                               &=E_\rho (x[D\varphi : D\varphi \circ \phi _\rho ]_t ) [D\varphi \circ \phi _\rho :D\varphi ]_t \lambda _t^\varphi
\end{align*}
Since $(\varphi , \rho )$ is an invariant pair, we have 
\[ [D\varphi \circ \phi _\rho :D\phi ]_t =d(\rho )^{-it}.\]
Hence we have 
\[E_{\rho } (x[D\varphi :D\varphi \circ \phi _\rho ]_t )[D\varphi \circ \circ \phi _\rho :D\varphi ]_t \lambda _t^\varphi = E_{\rho }(x) d(\rho ) ^{it} d(\rho ) ^{-it} \lambda _t^\varphi =E_\rho (x) \lambda _t^\varphi .\]
Hence by an argument of p.226 of Longo \cite{L}, it is shown that $\tilde{\rho}\circ \phi _{\tilde{\rho}}$ is the expectation with respect to $\tau$.
\end{proof}
\begin{lemm}
\label{trace}
For $\rho \in \mathrm{End}(M)_0$, we have $\tau \circ \phi _{\tilde{\rho}} =d(\rho ) ^{-1} \tau$.
\end{lemm}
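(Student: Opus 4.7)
The plan is to deduce the lemma from the scaling identity $\tau \circ \tilde{\rho} = d(\rho)\,\tau$ by means of Lemma \ref{expectation}. Indeed, that lemma says $\tilde{\rho}\circ \phi_{\tilde{\rho}} = E_{\tilde{\rho}}$, and since $E_{\tilde{\rho}}$ preserves $\tau$ by definition, $\tau\circ \tilde{\rho}\circ \phi_{\tilde{\rho}} = \tau$. Granting $\tau\circ\tilde{\rho} = d(\rho)\,\tau$, this immediately gives $d(\rho)\,\tau\circ\phi_{\tilde{\rho}} = \tau$, that is, $\tau\circ\phi_{\tilde{\rho}} = d(\rho)^{-1}\tau$.

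For the scaling identity, first I would observe that both sides of $\tau\circ\tilde{\rho} = d(\rho)\,\tau$ are unchanged under replacing $\rho$ by $\mathrm{Ad}\,u\circ\rho$ for a unitary $u\in M$ (using that $\tau$ is a trace and that the canonical extension satisfies $\widetilde{\mathrm{Ad}\,u\circ\rho} = \mathrm{Ad}\,u\circ\tilde{\rho}$), so we may assume $(\varphi,\rho)$ is an invariant pair. In this case $\varphi\circ\phi_\rho = d(\rho)^{-1}\varphi$, and hence $[D\varphi\circ\phi_\rho : D\varphi]_t = d(\rho)^{-it}$, so Izumi's formula for the canonical extension collapses to $\tilde{\rho}(x\lambda_t^\varphi) = \rho(x)\lambda_t^\varphi$. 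In particular $\tilde{\rho}(\lambda_t^\varphi) = \lambda_t^\varphi$ for every $t$, whence $\tilde{\rho}(h) = h$.

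To finish, I would invoke the operator-valued weight $T = \int_{\mathbf{R}}\theta_s\,ds\colon \tilde{M}\to M$ dual to the inclusion $M\hookrightarrow \tilde{M}$, which realizes the dual weight as $\hat{\varphi} = \varphi\circ T$. Since $\tilde{\rho}$ commutes with the dual action $\theta$ and restricts to $\rho$ on $M$, one has $T\circ\tilde{\rho} = \rho\circ T$; combining this with $\varphi\circ\rho = d(\rho)\varphi$ gives $\hat{\varphi}\circ\tilde{\rho} = d(\rho)\,\hat{\varphi}$. Using $\tau = \hat{\varphi}(h\cdot)$ together with $\tilde{\rho}(h) = h$ then yields $\tau(\tilde{\rho}(y)) = \hat{\varphi}(h\tilde{\rho}(y)) = \hat{\varphi}(\tilde{\rho}(hy)) = d(\rho)\,\hat{\varphi}(hy) = d(\rho)\,\tau(y)$. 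The main technical point is justifying $T\circ\tilde{\rho} = \rho\circ T$ on the natural domain of $T$ together with the unbounded-operator manipulation $\hat{\varphi}(h\tilde{\rho}(y)) = \hat{\varphi}(\tilde{\rho}(hy))$ on $\tilde{M}_+$; beyond this routine care, the argument is a direct unwinding of definitions.
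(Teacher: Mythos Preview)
Your argument is correct and matches the paper's proof exactly in its core step: both combine Lemma~\ref{expectation} (so that $\tau\circ\tilde{\rho}\circ\phi_{\tilde{\rho}}=\tau\circ E_{\tilde{\rho}}=\tau$) with the scaling identity $\tau\circ\tilde{\rho}=d(\rho)\,\tau$ to conclude. The only difference is that the paper simply quotes Proposition~2.5~(4) of Izumi~\cite{I} for the scaling identity, whereas you sketch a direct derivation of it via $\tilde{\rho}(h)=h$ and the dual operator-valued weight; your sketch is sound (and note that the invariant-pair reduction you perform is already in force in this section of the paper), but unnecessary given the citation.
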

\begin{proof}
By Lemma \ref{expectation}, we have $\phi _{\tilde{\rho}}=\tilde{\rho}^{-1} \circ E_{\tilde{\rho}}$. 
On the other hand, by Proposition 2.5 (4) of Izumi \cite{I}, we have $\tau \circ \tilde{\rho } =d(\rho ) \tau$.
Hence we have
\begin{align*}
\tau \circ \phi _{\tilde{\rho}} &=d(\rho) ^{-1} \tau \circ \tilde{\rho }\circ \phi _{\tilde{\rho}} \\
                                &=d(\rho ) ^{-1} \tau \circ \tilde{\rho} \circ \tilde{\rho} ^{-1} \circ E_{\tilde{\rho }} \\
                                &= d(\rho ) ^{-1} \tau \circ E_{\tilde{\rho}} \\
                                &= d(\rho ) ^{-1} \tau .
\end{align*}
\end{proof}
In the following, we identify $\mathcal{Z}(\tilde{M})$ with $L^\infty (X, \mu )$.
Let
\[ \tau =\int _X^\oplus \tau _x \ d\mu (x)\]
be the direct integral decomposition of $\tau$.
\begin{lemm}
\label{kkey}
Let $\rho , \sigma $ be elements of $\mathrm{End}(M)_0$. Assume that $\phi _{\tilde{\rho}}|_{\mathcal{Z}(\tilde{M})}=\phi _{\tilde{\sigma }}|_{\mathcal{Z}(\tilde{M})}$ and $d(\rho )=d(\sigma )$. For $a\in \tilde{M}_+$ with $\tau (a) <\infty$, set
\[ b:=\tilde{\rho} (a)=\int _X^\oplus b_x \ d\mu (x),\]
\[ c:=\tilde{\sigma } (a) =\int _X^\oplus c_x \ d\mu (x).\]
Then we have
\[\tau _x (b_x)=\tau _x(c_x)\]
for almost every $x\in X$.
\end{lemm}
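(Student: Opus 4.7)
The plan is to reduce the desired almost-everywhere identity to an integrated identity on $X$ and then verify the latter using the bimodule property of the minimal conditional expectation together with Lemma \ref{trace}.

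First I note that since $\tau(a)<\infty$, Proposition 2.5 (4) of \cite{I} gives $\tau(b)=\tau(\tilde{\rho}(a))=d(\rho)\tau(a)<\infty$ and similarly $\tau(c)<\infty$, so that both $x\mapsto\tau_x(b_x)$ and $x\mapsto \tau_x(c_x)$ lie in $L^1(X,\mu)$. It therefore suffices to prove the integrated identity
\[
\tau(fb)=\tau(fc)\qquad\text{for every }f\in\mathcal{Z}(\tilde{M})\cong L^\infty(X,\mu),
\]
since varying $f$ over $L^\infty(X,\mu)$ and using the direct integral decomposition of $\tau$ then forces $\tau_x(b_x)=\tau_x(c_x)$ for $\mu$-almost every $x$.

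To compute $\tau(f\cdot\tilde{\rho}(a))$, I would use Lemma \ref{expectation} to rewrite $\phi_{\tilde{\rho}}=\tilde{\rho}^{-1}\circ E_{\tilde{\rho}}$. Because $\tilde{\rho}(a)\in\tilde{\rho}(\tilde{M})$ can be pulled outside the conditional expectation, we obtain
\[
\phi_{\tilde{\rho}}\bigl(f\cdot\tilde{\rho}(a)\bigr)=\tilde{\rho}^{-1}\bigl(E_{\tilde{\rho}}(f)\,\tilde{\rho}(a)\bigr)=\phi_{\tilde{\rho}}(f)\cdot a.
\]
Combining with Lemma \ref{trace}, which gives $\tau=d(\rho)\,\tau\circ\phi_{\tilde{\rho}}$, this yields
\[
\tau\bigl(f\cdot\tilde{\rho}(a)\bigr)=d(\rho)\,\tau\bigl(\phi_{\tilde{\rho}}(f)\cdot a\bigr),
\]
and the same computation with $\sigma$ in place of $\rho$ gives $\tau(f\cdot\tilde{\sigma}(a))=d(\sigma)\,\tau(\phi_{\tilde{\sigma}}(f)\cdot a)$. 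The hypotheses $d(\rho)=d(\sigma)$ and $\phi_{\tilde{\rho}}|_{\mathcal{Z}(\tilde{M})}=\phi_{\tilde{\sigma}}|_{\mathcal{Z}(\tilde{M})}$ then make the two right-hand sides agree, establishing $\tau(fb)=\tau(fc)$ as required.

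There is no serious obstacle once Lemmas \ref{expectation} and \ref{trace} are in hand; the only point that needs attention is checking that every term in the manipulation is well defined, i.e.\ that $E_{\tilde{\rho}}(f)$ is a bounded element of $\tilde{\rho}(\tilde{M})$, that $\phi_{\tilde{\rho}}(f)\in\tilde{M}$, and that all products against $a$ have finite $\tau$-trace. Each of these is immediate from the boundedness of $f$ and the hypothesis $\tau(a)<\infty$.
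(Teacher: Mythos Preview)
Your proof is correct and follows essentially the same approach as the paper's: both reduce to the integrated identity $\tau(zb)=\tau(zc)$ for $z\in\mathcal{Z}(\tilde{M})$, establish it via the bimodule property of $E_{\tilde{\rho}}$ combined with Lemmas \ref{expectation} and \ref{trace}, and then invoke the $L^1$--$L^\infty$ duality to conclude the almost-everywhere equality. Your version is slightly more explicit in justifying that $\tau_x(b_x)$ and $\tau_x(c_x)$ are integrable via Proposition~2.5~(4) of \cite{I}, but otherwise the arguments coincide.
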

\begin{proof}
Take an arbitrary positive element $z$ of $\mathcal{Z}(\tilde{M})_+$. Then we have
\begin{align*}
\tau (bz) &= \int _X \tau _x(b_xz_x) \ d\mu (x) \\
          &=\int _X \tau _x(b_x)z_x \ d\mu (x).
\end{align*}
Similarly, we have
\[ \tau (cz) =\int _X\tau _x(c_x)z_x \ d\mu (x).\]
On the other hand, by Lemma \ref{trace}, we have 
\begin{align*}
\tau (bz)&= d(\rho ) \tau \circ \phi _{\tilde{\rho}}(bz) \\
         &=d(\rho ) \tau \circ \phi _{\tilde{\rho}}(\tilde{\rho}(a)z) \\
         &=d(\rho ) \tau \circ \tilde{\rho }^{-1} \circ E_{\tilde{\rho}} (\tilde{\rho }(a)z) \\
         &=d(\rho ) \tau \circ \tilde{\rho } ^{-1} (\tilde{\rho} (a) E_{\tilde{\rho}}(z)) \\
         &=d(\rho ) \tau (a \phi _{\tilde{\rho}}(z)).
\end{align*}
Since we assume $d(\rho )=d(\sigma ) $ and $\phi _{\tilde{\rho}}|_{\mathcal{Z}(\tilde{M})}=\phi _{\tilde{\sigma}}|_{\mathcal{Z}(\tilde{M})}$, the last number of the above equality is $d(\sigma ) \tau (a\phi _{\tilde{\sigma }}(z))$, which is shown to be $\tau (cz)$ in a similar way. 
Hence we have
\[ \int _X \tau _x(b_x)z_x \ d\mu (x)=\int _X \tau _x(c_x) z_x \ d\mu (x).\]
Since the maps $x\mapsto \tau _x(b_x)$ and $x\mapsto \tau _x(c_x)$ are integrable functions and $z\in L^\infty (X, \mu )=L^1(X, \mu )^*$ is arbitrary, we have $\tau _x(b_x)=\tau _x(c_x)$ for almost every $x\in X$.
\end{proof}
Note that we have never used the assumption that $M$ is approximately finite up to this point. 
However, in order to show the following lemma, we need to assume that $M$ is approximately finite.
Let 
\[ \tilde{M}=\int ^\oplus _X\tilde{M}_x \ d\mu (x)\]
be the direct integral decomposition.
\begin{lemm}
\label{convergence in strong}
Let $M$ be an AFD factor of type $\mathrm{III}$ and $\rho$, $\sigma $ be as in Lemma \ref{kkey}. 
Then for almost every $x\in X$, there exist a factor $B_x$ of type $I_\infty$, a unitary $u$ of $\tilde{M}_x$ and a sequence $\{ u_n\}$ of unitaries of $\tilde{M}_x$ with the following properties.

\bigskip

\textup{(1)} The relative commutant $B_x'\cap \tilde{M}_x$ is finite.

\textup{(2)} There exists a sequence of unitaries $\{ v_n\}$ of $B_x'\cap \tilde{M}_x$ with $u_n =(v_n \otimes 1)u$, where we identify $\tilde{M}_x$ with $(B_x'\cap \tilde{M}_x)\otimes B_x$.

\textup{(3)} For almost every $x\in X$ and for any $a\in \tilde{M}$, we have $\mathrm{Ad}u_n ((\tilde{\rho} (a))_x) \to (\tilde{\sigma }(a))_x $ in the strong * topology.

\textup{(4)} We have $B_x\subset u(\tilde{\rho}(\tilde{M}))_xu^*\cap (\tilde{\sigma }(\tilde{M}))_x$.
\end{lemm}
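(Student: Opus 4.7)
The plan is to work pointwise in the direct integral decomposition, using that since $M$ is AFD of type III the continuous core $\tilde{M}$ is AFD of type $\mathrm{II}_\infty$, and consequently each fiber $\tilde{M}_x$ is the (unique) AFD factor of type $\mathrm{II}_\infty$ for almost every $x \in X$.

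I would first construct $B_x$ and $u$. Since $\tilde{\sigma}(\tilde{M})_x$ is an AFD type $\mathrm{II}_\infty$ subfactor of $\tilde{M}_x$ of finite index $d(\sigma)^2$, fix a splitting $\tilde{\sigma}(\tilde{M})_x = R_x \otimes B_x$ with $R_x$ the AFD $\mathrm{II}_1$ factor and $B_x$ of type $\mathrm{I}_\infty$. Then $B_x' \cap \tilde{M}_x$ is a finite index extension of $B_x' \cap \tilde{\sigma}(\tilde{M})_x = R_x$, hence itself a finite AFD factor, giving property (1). Repeating the construction inside $\tilde{\rho}(\tilde{M})_x$ produces a type $\mathrm{I}_\infty$ subfactor $C_x \subset \tilde{\rho}(\tilde{M})_x$ with finite AFD relative commutant in $\tilde{M}_x$. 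Now $B_x$ and $C_x$ are two type $\mathrm{I}_\infty$ subfactors of the AFD $\mathrm{II}_\infty$ factor $\tilde{M}_x$ with isomorphic (both AFD $\mathrm{II}_1$) relative commutants; by the classification of such subfactors in the AFD $\mathrm{II}_\infty$ factor, a unitary $u \in \tilde{M}_x$ with $u C_x u^* = B_x$ exists, which delivers property (4).

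For the convergence (property (3)), write $\tilde{M}_x = N \otimes B_x$ with $N := B_x' \cap \tilde{M}_x$. The images then decompose as $u \tilde{\rho}(\tilde{M})_x u^* = P \otimes B_x$ and $\tilde{\sigma}(\tilde{M})_x = Q \otimes B_x$ for AFD $\mathrm{II}_1$ subfactors $P, Q \subset N$. A unitary $v_n \otimes 1$ with $v_n$ a unitary of $N$ acts trivially on $B_x$ and as $\mathrm{Ad}(v_n)$ on $N$; one must find such $v_n$ with $\mathrm{Ad}(v_n \otimes 1)(u(\tilde{\rho}(a))_x u^*) \to (\tilde{\sigma}(a))_x$ in the strong $*$ topology for every $a \in \tilde{M}$. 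By Lemma \ref{kkey} the maps $a \mapsto u(\tilde{\rho}(a))_x u^*$ and $a \mapsto (\tilde{\sigma}(a))_x$ agree in trace on positive trace-finite elements, and by the choice of $u$ they share the common type $\mathrm{I}_\infty$ piece $B_x$ in their images. An approximate unitary equivalence argument in the AFD $\mathrm{II}_1$ factor $N$ then yields the required sequence $v_n$, producing properties (2) and (3).

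The main obstacle is forcing the approximating unitaries to lie in the finite corner $N$, rather than in all of $\tilde{M}_x$: this relies on the initial alignment $u$ using up the entire type $\mathrm{I}_\infty$ freedom, so that only finite-factor ambiguity remains to be handled by Dye-type approximate unitary equivalence inside $N$. A secondary technical matter is measurability: the pointwise constructions must be realized Borel-measurably in $x$, and to guarantee the convergence statement for every $a \in \tilde{M}$ on a single full-measure set, one applies the argument to a countable strong $*$ dense sequence in $\tilde{M}$ and takes a union of null sets.
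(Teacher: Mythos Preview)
Your approach diverges from the paper's in a way that leaves a genuine gap. The paper does \emph{not} work fiber-by-fiber: it first chooses a type $\mathrm{I}_\infty$ subfactor $B_0\subset\tilde{\rho}(\tilde{M})\subset\tilde{M}$ \emph{globally}, with matrix units $\{f_{ij}^0\}\subset\tilde{M}$, and then uses Lemma~\ref{kkey} to find a global unitary $u\in\tilde{M}$ with $u\tilde{\rho}(f_{ij}^0)u^*=\tilde{\sigma}(f_{ij}^0)$. The point is that after this step the two homomorphisms $\mathrm{Ad}u\circ\tilde{\rho}$ and $\tilde{\sigma}$ literally \emph{agree on the subalgebra $B_0$ of the domain}. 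Consequently, in the decomposition $\tilde{M}\cong(B_0'\cap\tilde{M})\otimes B_0$ both maps have the form $(\text{something into }B'\cap\tilde{M})\otimes(\text{the same isomorphism onto }B)$, and the remaining discrepancy lives entirely in the finite factor $P=B'\cap\tilde{M}$. The fiberwise $v_n$ are then produced by matching projections of an increasing matrix-unit system of $P\cong R\otimes\mathcal{Z}(\tilde{M})$, again via Lemma~\ref{kkey}.

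Your construction only aligns the \emph{images}: you pick $C_x\subset\tilde{\rho}(\tilde{M})_x$ and $B_x\subset\tilde{\sigma}(\tilde{M})_x$ and conjugate one onto the other. This does not force the two fiber homomorphisms to coincide on any common type $\mathrm{I}_\infty$ piece of the \emph{domain}; the preimages $(\mathrm{Ad}u\circ\rho_x)^{-1}(B_x)$ and $\sigma_x^{-1}(B_x)$ can be entirely different. Without that agreement, the conclusion that the approximating unitaries may be taken in $N=B_x'\cap\tilde{M}_x$ fails: for instance, on $R\otimes B(H)$ the trace-preserving automorphisms $\mathrm{id}$ and $\mathrm{id}\otimes\mathrm{Ad}w$ (with $w$ non-scalar) share the image $1\otimes B(H)$ but are not approximately conjugate by unitaries of the form $v_n\otimes 1$. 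So your sentence ``the initial alignment $u$ using up the entire type $\mathrm{I}_\infty$ freedom'' is precisely the unjustified step. A secondary cost of working fiberwise from the outset is that $B_x$ and $u$ then depend on $x$, whereas the next lemma requires them globally; the paper's global choice of $B_0$ and $u$ makes the ``direct integration'' of Lemma~\ref{convergence in strong globaly} immediate.
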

\begin{proof}
Let $B_0 \subset \tilde{\rho} (\tilde{M})$ be a factor of type $\mathrm{I}_\infty$ with $Q:=\tilde{\rho} (\tilde{M}) \cap B_0'$ finite.
Let $\{ f_{ij}^0 \}$ be a matrix unit generating $B_0$.
We may assume that $\tau (f_{ii}^0)<\infty$.
Then since $(\tau \circ E_{\tilde{\rho}})_x((f_{11}^0)_x)<\infty$ for almost every $x\in X$, $P:=\tilde{M} \cap B_0'$ is also finite.
Then by Lemma \ref{kkey}, there exists a partial isometry $v$ of $\tilde{M}$ with $v^*v=\tilde{\rho }(f_{11}^0)$, $vv^*=\tilde{\sigma}(f_{11}^0)$.
Set
\[ u:=\sum _{j=1}^\infty \tilde{\sigma}(f_{j1}^0)v\tilde{\rho}(f_{1j}^0).\]
Then $u$ is a unitary of $\tilde{M}$ with $u\tilde{\sigma}(f_{ij}^0)u^*=\tilde{\rho}(f_{ij}^0)$.
Set 
\[ B:=\tilde{\sigma }(B_0)(=u\tilde{\rho}(B_0)u^*), \]
\[ f_{ij}:=\tilde{\sigma }(f_{ij}^0)(=u\tilde{\rho}(f_{ij}^0)u^*).\]
By replacing $\tilde{\rho} $ by $\mathrm{Ad}u \circ \tilde {\rho}$, we may assume that $\tilde{\rho}(f_{ij})=\tilde{\sigma }(f_{ij})$.
In the following, we identify $\tilde{M}$ with $P \otimes B$ and $P $ with $R\otimes \mathcal{Z}(\tilde{M})$, where $R$ is the AFD factor of type $\mathrm{II}_1$.
By the approximate finiteness of $R$ and $\mathcal{Z}(\tilde{M})$, there exists a sequence $\{ \{ e_{ij}^n\otimes a_k^n \}_{i,j,k}\}_{n=1}^\infty$ of systems of partial isometries of $P$ with the following properties.

\bigskip

(1) For each $n$, the system $\{ e_{ij}^n\}_{i,j}$ is a matrix unit of $R$.

(2) For each $n$, the system $\{ a_k^n\}_k$ is a partition of unity in $\mathcal{Z}(\tilde{M})$.

(3) For each $n$, $\{ e_{ij}^{n+1}\}_{i,j} $ is a refinement of $\{ e_{ij}^n\}_{i,j}$.

(4) For each $n$, $\{ a_k^{n+1}\}_k$ is a refinement of $\{ a_k^n\}_k$.

(5) We have $\bigvee _{n=1}^\infty \{ e^n_{ij} \otimes a^n_k\}_{i,j,k}'' =P$.

\bigskip

Fix a natural number $n$. 
Then by Lemma \ref{kkey}, we have
\[ \tau _x ((\tilde{\rho}(e^n_{11} \otimes a_k^n\otimes f_{11}))_x)=\tau _x((\tilde{\sigma }(e^n_{11} \otimes a_k^n \otimes f_{11}))_x)\]
for almost every $x\in X$. 
Hence for almost every $x\in X$, there exists a partial isometry $v_k^n$ of $P_x=(\tilde{\rho}(f_{11})\tilde{M}\tilde{\rho}(f_{11}))_x$ with
\[ {v_k^n}^*v_k^n =\tilde{\rho} (e^n_{11} \otimes a_k^n \otimes f_{11})_x, \ v_k^n {v_k^n }^* =\tilde{\sigma } (e^n_{11} \otimes a_k^n \otimes f_{11})_x.\]
Set
\[ v_n :=\sum _{k,j} \tilde{\sigma }(e_{j1}\otimes a_k^n \otimes f_{11})_xv_k^n \tilde{\rho }(e_{1j} \otimes a_k^n \otimes f_{11})_x.\]
Then $v_n$ is a unitary of $ \tilde{\rho} (f_{11})_x \tilde{M}_x \tilde{\rho} (f_{11})_x$ with
\[ v_n \tilde{\rho}(e_{ij}^n\otimes a_k^n \otimes f_{11} )_xv_n^* =\tilde{\sigma }(e_{ij}^n \otimes a_k^n \otimes f_{11})_x.\]
Hence for almost every $x\in X$, there exists a sequence $\{ v_n\} $ of unitaries of $P_x$ with 
\[ \mathrm{Ad}(v_n \otimes 1) (\tilde{\rho}(a)_x )\to \tilde{\sigma }(a)_x\]
for any $a\in \tilde{M}$.
\end{proof}
\begin{lemm}
\label{convergence in strong globaly} Let $M$, $\rho$ and $\sigma $ be as in Lemma \ref{convergence in strong}. 
Then there exist a unital subfactor $B$ of $\tilde{M}$, a unitary $u$ of $\tilde{M}$ and a sequence $\{ u_n\}$ of unitaries of $\tilde{M}$ with the following properties.

\bigskip

\textup{(1)} The factor $B$ is of type $I_\infty$.

\textup{(2)} The relative commutant $B'\cap \tilde{M}$ is finite.

\textup{(3)} There exists a sequence of unitaries $\{ v_n\}$ of $B'\cap \tilde{M}$ with $u_n =(v_n \otimes 1)u$, where we identify $\tilde{M}$ with $(B'\cap \tilde{M})\otimes B$.

\textup{(4)} For any $a\in \tilde{M}$, we have $\mathrm{Ad}u_n \circ \tilde{\rho} (a) \to \tilde{\sigma }(a) $ in the strong * topology.

\textup{(5)} We have $B\subset u\tilde{\rho}(\tilde{M})u^*\cap \tilde{\sigma }(\tilde{M})$.
\end{lemm}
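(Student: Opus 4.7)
The plan is to globalise the proof of Lemma \ref{convergence in strong}. In fact most of that proof already takes place at the level of $\tilde M$ and not on fibres: the selection of a type $\mathrm{I}_\infty$ subfactor $B_0\subset\tilde\rho(\tilde M)$ with both $Q=\tilde\rho(\tilde M)\cap B_0'$ and $P:=\tilde M\cap B_0'$ finite, the construction of a partial isometry with initial projection $\tilde\rho(f^0_{11})$ and final projection $\tilde\sigma(f^0_{11})$, its assembly into a unitary $u$ with $u\tilde\rho(f^0_{ij})u^*=\tilde\sigma(f^0_{ij})$, and the reduction (after replacing $\tilde\rho$ by $\mathrm{Ad}u\circ\tilde\rho$) to the case $\tilde\rho|_{B_0}=\tilde\sigma|_{B_0}$ with common image $B$, are all carried out globally. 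The global existence of the initial partial isometry is precisely where Lemma \ref{kkey} is used: the two projections live in the finite algebra $P$ and have identical $\tau_x$-traces on almost every fibre, so their centre-valued traces in $P$ agree and they are Murray--von Neumann equivalent in $P$ itself.

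After these reductions I identify $\tilde M\cong P\otimes B$ and $P\cong R\otimes\mathcal Z(\tilde M)$, the latter using approximate finiteness of $M$ and hence of $\tilde M$ and its finite corner $P$. Exactly as in the proof of Lemma \ref{convergence in strong} I choose a sequence $\{\{e^n_{ij}\otimes a^n_k\}_{i,j,k}\}_{n=1}^\infty$ of systems of partial isometries in $P$ satisfying the refinement and generation conditions (1)--(5) of that proof, with the $e^n_{ij}$ taken in $R$ and the $a^n_k$ in $\mathcal Z(\tilde M)$.

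Next I globalise the choice of $v^n_k$. For each $n$ and $k$ the projections $\tilde\rho(e^n_{11}\otimes a^n_k\otimes f_{11})$ and $\tilde\sigma(e^n_{11}\otimes a^n_k\otimes f_{11})$ sit in the finite cut-down $f_{11}\tilde M f_{11}$, and by Lemma \ref{kkey} their $\tau_x$-traces agree on almost every fibre; hence their centre-valued traces in $P$ coincide and a global partial isometry $v^n_k$ implementing the equivalence exists. Just as in the fibre-wise argument I then set
\[ v_n:=\sum_{j,k}\tilde\sigma(e^n_{j1}\otimes a^n_k\otimes f_{11})\,v^n_k\,\tilde\rho(e^n_{1j}\otimes a^n_k\otimes f_{11})\in P,\]
which is a unitary in the cut-down, and take $u_n:=(v_n\otimes 1)u\in\tilde M$. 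By the matrix-unit relations together with properties (3)--(4), for every $m\leq n$ and every $i,j,k,r,s$ one has $(v_n\otimes 1)\tilde\rho(e^m_{ij}\otimes a^m_k\otimes f_{rs})(v_n\otimes 1)^*=\tilde\sigma(e^m_{ij}\otimes a^m_k\otimes f_{rs})$, so $\mathrm{Ad}u_n\circ\tilde\rho$ agrees with $\tilde\sigma$ on the increasing $*$-subalgebra generated by these elements as soon as $n$ is large enough.

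The union of these $*$-subalgebras is strong*-dense in $\tilde M=P\otimes B$ by property (5) and the fact that the $f_{rs}$ generate $B$. Because $\mathrm{Ad}u_n\circ\tilde\rho$ and $\tilde\sigma$ are normal contractive $*$-homomorphisms, a standard $\varepsilon/3$-argument then extends the identity on this dense subalgebra to strong*-convergence on all of $\tilde M$. I expect the main obstacle to be precisely this last step: since $\tilde M$ is of type $\mathrm{II}_\infty$ there is no faithful finite trace available, so the approximation must be organised around the decomposition $\tilde M=P\otimes B$, using the finite trace on $P$ combined with the matrix-unit expansion in $B$ and the uniform contractivity of the $*$-homomorphisms to control the error simultaneously for all $n$.
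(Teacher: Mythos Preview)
Your approach is correct and is, in substance, exactly what the paper compresses into the single sentence ``This is shown by `directly integrating' the above lemma.'' Rather than literally assembling measurable fibrewise intertwiners, you construct the partial isometries $v^n_k$ globally via comparison of centre-valued traces in the finite algebra $P$, which is the clean way to realise that direct integration without invoking a measurable selection principle.

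The obstacle you flag at the end is not a genuine one, and the resolution is precisely the one you gesture toward. Take the faithful normal state $\psi:=\bar\tau\otimes\omega$ on $\tilde M\cong P\otimes B$, where $\bar\tau$ is the normalised finite trace on $P$ and $\omega$ is any faithful normal state on $B$. Since $\bar\tau$ is a trace and $v_n\in P$, the state $\psi$ is $\mathrm{Ad}(v_n\otimes 1)$-invariant. Now for $a$ in the unit ball of $\tilde M$, choose by Kaplansky density an approximant $a_m$ in the unit ball of your increasing $*$-subalgebra with $a_m\to a$ strongly$^*$; then
\[
\bigl\|\mathrm{Ad}(v_n\otimes 1)\,\tilde\rho(a-a_m)\bigr\|^\sharp_\psi
=\bigl\|\tilde\rho(a-a_m)\bigr\|^\sharp_\psi,
\]
which is independent of $n$ and tends to $0$ with $m$ by normality of $\tilde\rho$. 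The term $\|\tilde\sigma(a_m-a)\|^\sharp_\psi$ is handled the same way, and the middle term $\|\mathrm{Ad}(v_n\otimes 1)\tilde\rho(a_m)-\tilde\sigma(a_m)\|^\sharp_\psi$ vanishes once $n\ge m$. So the $\varepsilon/3$ argument works exactly as in the finite case; the type $\mathrm{II}_\infty$ setting causes no difficulty because the unitaries all live in the finite corner $P$.
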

\begin{proof}
This is shown by ``directly integrating'' the above lemma.
\end{proof}

The conclusion of Lemma \ref{convergence in strong globaly} means that $\mathrm{Ad}u_n \circ \tilde{\rho}$ converges to $\tilde{\sigma }$ point *strongly. 
However, this convergence is slightly weaker than the topology we consider.
We need to fill this gap.
In order to achieve this, the following criterion is very useful.
\begin{lemm}
\textup{(Lemma 3.8 of Masuda--Tomatsu \cite{MT2}).}
\label{criterion}
Let $\rho$ and $\rho _n$, $n\in \mathbf{N}$ be be endomorphisms of a von Neumann algebra $N$ with left inverses $\Phi$ and $\Phi _n$, $n\in \mathbf{N}$, respectively. 
Fix a normal faithful state $\phi $ of $N$. Then the following two conditions are equivalent.

\textup{(1)} We have $\lim _{n \to \infty } \| \psi \circ \Phi _n -\psi \circ \Phi \| =0 $ for all $\psi \in N_*$.

\textup{(2)} We have $\lim _{n\to \infty } \| \phi \circ \Phi _n -\phi \circ \Phi \| =0$ and $\lim _{n\to \infty} \rho _n (a) =\rho (a) $ for all $a \in N$.
\end{lemm}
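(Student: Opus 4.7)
The direction (1) $\Rightarrow$ (2) is the easier half. The norm convergence $\|\phi \circ \Phi_n - \phi \circ \Phi\| \to 0$ is immediate by specializing (1) to $\psi = \phi$. For the convergence $\rho_n(a) \to \rho(a)$, I would fix $a \in N$ and $\psi \in N_*$ and exploit $\Phi_n(\rho_n(a)) = a = \Phi(\rho(a))$ to write
\[
(\psi \circ \Phi)(\rho_n(a)) - \psi(a) = \bigl((\psi \circ \Phi) - (\psi \circ \Phi_n)\bigr)(\rho_n(a)),
\]
which is bounded in absolute value by $\|a\| \cdot \|\psi \circ \Phi - \psi \circ \Phi_n\| \to 0$. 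Hence $(\psi \circ \Phi)(\rho_n(a)) \to (\psi \circ \Phi)(\rho(a))$ for every $\psi \in N_*$. Since $\Phi$ is surjective onto $N$, the family $\{\psi \circ \Phi\}$ separates bounded subsets of $\rho(N)$, yielding a weak form of the desired convergence; combined with Schwarz's inequality applied to the $*$-endomorphisms $\rho_n$ and $\rho$, and with $\|\phi \circ \Phi_n - \phi \circ \Phi\| \to 0$, this can then be upgraded to the relevant strong$^*$ topology by expanding $\|\rho_n(a) - \rho(a)\|_\phi^\sharp$ into cross terms and controlling each.

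For the substantive direction (2) $\Rightarrow$ (1), the plan is to observe that
\[
\mathcal{F} := \bigl\{\psi \in N_* : \|\psi \circ \Phi_n - \psi \circ \Phi\| \to 0\bigr\}
\]
is a norm-closed linear subspace of $N_*$ (automatic since every $\Phi_n$ and $\Phi$ is contractive) and to exhibit a norm-dense family of $N_*$ inside $\mathcal{F}$. A convenient choice is the span of functionals $\psi(x) = \phi(y^* x z)$ with $y, z \in N$. For such $\psi$, the bimodule property $\Phi_n(\rho_n(a)^* x \rho_n(b)) = a^* \Phi_n(x) b$ enjoyed by the left inverse of an endomorphism, together with the approximation $\rho_n(a) \to \rho(a)$ supplied by (2), reduces $\psi \circ \Phi_n - \psi \circ \Phi$ up to controllable error terms to $\phi \circ \Phi_n - \phi \circ \Phi$, whose norm tends to zero by hypothesis. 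The error terms coming from the approximation $\rho_n(a) - \rho(a)$ are then estimated via the Cauchy--Schwarz inequalities recorded in Lemma \ref{inequality}.

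The main obstacle is producing the \emph{uniform} operator-norm control required in (1) from the merely pointwise data in (2), particularly in handling non-self-adjoint $\psi$. I would apply the polar decomposition of normal functionals to reduce to the positive case, and then invoke the two inequalities of Lemma \ref{inequality} to bound $\|\lambda \circ \Phi_n - \lambda \circ \Phi\|$ quantitatively in terms of the known quantities $\|\phi \circ \Phi_n - \phi \circ \Phi\|$ and $\|\rho_n(a) - \rho(a)\|_\phi$; without this quantitative mechanism, the passage from pointwise to uniform convergence would break down.
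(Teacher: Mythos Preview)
The paper does not give its own proof of this lemma: it is quoted verbatim as Lemma~3.8 of Masuda--Tomatsu \cite{MT2} and used as a black box. There is therefore no ``paper's proof'' to compare against; your sketch is effectively a reconstruction of the cited result.

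That said, your outline is on the right track and matches the standard argument. For $(2)\Rightarrow(1)$ the density strategy is exactly what one does: the set $\mathcal{F}$ is norm-closed, the functionals $x\mapsto \phi(y^*xz)$ are norm-dense in $N_*$ by faithfulness of $\phi$, and the bimodule identity $\Phi_n(\rho_n(y)^*x\rho_n(z))=y^*\Phi_n(x)z$ reduces $\psi\circ\Phi_n$ to $(\phi\circ\Phi_n)(\rho_n(y)^*\,\cdot\,\rho_n(z))$, after which the hypothesis $\|\phi\circ\Phi_n-\phi\circ\Phi\|\to0$ and the Cauchy--Schwarz estimates of Lemma~\ref{inequality} handle the remainder terms uniformly in $\|x\|\le1$. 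Your description of $(1)\Rightarrow(2)$ is a little loose: the clean way to get $\sigma$-strong$^*$ convergence of $\rho_n(a)$ is to expand $\|\rho_n(a)-\rho(a)\|_{\psi}^2$ for an arbitrary normal state $\psi$, rewrite each of the four terms using the bimodule property of $\Phi$ or $\Phi_n$ (e.g.\ $\psi(\rho_n(a)^*\rho(a))=(\psi\circ\Phi)(\rho_n(a)^*\rho(a))\cdot$?\ no---rather, apply (1) to the functionals $a\psi$, $\psi a^*$, etc.), and observe that each converges to $\psi(a^*a)$; your remark about ``upgrading'' via cross terms is correct in spirit but would benefit from being written out.
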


Hence what we need to do is to find a normal faithful state of $\tilde{M}$ satisfying condition (2) of Lemma \ref{criterion}.

\begin{lemm}
\label{key}
Let $M$, $\rho$, $\sigma $ be as in Lemma \ref{convergence in strong}. 
Then  there exists a sequence of unitaries $u_n$ of $\tilde{M}$ with $\mathrm{Ad}u_n \circ \tilde{\rho} \to \tilde{\sigma} $. 
\end{lemm}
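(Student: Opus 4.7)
My plan is to apply the Masuda--Tomatsu criterion (Lemma \ref{criterion}) to bridge the gap between the point-$*$strong convergence supplied by Lemma \ref{convergence in strong globaly} and the topology of $\mathrm{End}(\tilde{M})_0$. Setting $\tilde{\rho}_n := \mathrm{Ad}u_n \circ \tilde{\rho}$, we already have $\tilde{\rho}_n(x) \to \tilde{\sigma}(x)$ in the $*$-strong topology for every $x \in \tilde{M}$, so by the criterion it remains to produce a single faithful normal state $\phi$ on $\tilde{M}$ such that $\|\phi \circ \phi_{\tilde{\rho}_n} - \phi \circ \phi_{\tilde{\sigma}}\| \to 0$. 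Using the identity $\phi_{\tilde{\rho}_n} = \phi_{\tilde{\rho}} \circ \mathrm{Ad}u_n^*$ together with the standing hypothesis $\phi_{\tilde{\rho}} = \phi_{\tilde{\sigma}}$, this reduces to arranging $\|\omega \circ \mathrm{Ad}u_n^* - \omega\| \to 0$ for the induced state $\omega := \phi \circ \phi_{\tilde{\sigma}}$.

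To construct $\phi$, I would place the $\tau$-density of $\omega$ inside the subfactor $B$ provided by Lemma \ref{convergence in strong globaly}. Concretely, pick a positive $a \in B$ of full support in $\tilde{M}$ with $\tau(a) = d(\sigma)$ (possible because $B$ is a type $I_\infty$ factor on which $\tau$ is a semifinite trace), and set $\phi(x) := \tau(\tilde{\sigma}^{-1}(a)^{1/2} x \tilde{\sigma}^{-1}(a)^{1/2})$. Faithfulness of $\phi$ follows from the full support of $a$ and injectivity of $\tilde{\sigma}$. A direct computation using Lemma \ref{expectation} and Lemma \ref{trace} identifies the $\tau$-density of $\omega$ as $a/d(\sigma) \in B$, and consequently
\[
\|\omega \circ \mathrm{Ad}u_n^* - \omega\| = d(\sigma)^{-1}\|u_n a u_n^* - a\|_1 = d(\sigma)^{-1}\|u a u^* - a\|_1,
\]
where the last equality uses that $v_n \otimes 1 \in P := B' \cap \tilde{M}$ commutes with $a \in B$ and that the $L^1$-norm is unitary-invariant. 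The $v_n$-dependence drops out completely, leaving only the residual $u$-contribution.

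The main obstacle is therefore to force $uau^* = a$, i.e.\ to pick $a$ inside $B \cap \{u\}'$ with the required positivity, support, and trace. The plan is to exploit the flexibility in choosing the initial type $I_\infty$ subfactor $B_0 \subset \tilde{\rho}(\tilde{M})$ in the proof of Lemma \ref{convergence in strong}: by refining that construction so that $B_0$ is suitably compatible with $\tilde{\sigma}(\tilde{M})$, one can arrange that the intertwining unitary $u = \sum_j \tilde{\sigma}(f_{j1}^0) v \tilde{\rho}(f_{1j}^0)$ acts on $B = \tilde{\sigma}(B_0)$ with a rich fixed-point algebra. Combined with the fact that every automorphism of the type $I_\infty$ factor $B$ is inner, this provides enough room to find the required full-support positive fixed element $a$. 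Once such an $a$ is in hand, $\omega \circ \mathrm{Ad}u_n^* = \omega$ for every $n$, the criterion applies, and the lemma follows.
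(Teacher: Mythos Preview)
Your reduction in the first paragraph rests on reading the standing hypothesis as the \emph{global} equality $\phi_{\tilde{\rho}}=\phi_{\tilde{\sigma}}$. That sentence in the paper is a typo: the hypothesis throughout Section~4 is only $\phi_{\tilde{\rho}}|_{\mathcal{Z}(\tilde M)}=\phi_{\tilde{\sigma}}|_{\mathcal{Z}(\tilde M)}$ (this is the explicit assumption in Lemma~\ref{kkey} and the case $d(\rho)=d(\sigma)$ of Theorem~\ref{main}(1)). Indeed, the global equality would force $\tilde\rho=\tilde\sigma$: the multiplicative domain of $\phi_{\tilde\rho}=\tilde\rho^{-1}\circ E_{\tilde\rho}$ is exactly $\tilde\rho(\tilde M)$, and if $\phi_{\tilde\rho}=\phi_{\tilde\sigma}$ then $\tilde\sigma(\tilde M)$ lies in that multiplicative domain, whence $\tilde\rho(\tilde M)=\tilde\sigma(\tilde M)$ and $\tilde\rho^{-1}=\tilde\sigma^{-1}$ on this common range. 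So the lemma would be vacuous, and your identity $\phi\circ\phi_{\tilde\rho}\circ\mathrm{Ad}u_n^*=\omega\circ\mathrm{Ad}u_n^*$ with $\omega=\phi\circ\phi_{\tilde\sigma}$ is not available.

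Without that identity you have to compute $\phi\circ\phi_{\tilde\rho}\circ\mathrm{Ad}u_n^*$ directly, and this is where the paper's argument differs from yours. The paper takes $\phi=(\overline{\tau}\otimes\varphi)\circ\Psi_F$, where $\overline{\tau}$ is a \emph{trace} on $F'\cap\tilde M$ built from $\tau$, and then the scaling relation $\tau\circ\phi_{\tilde\rho}=d(\rho)^{-1}\tau=d(\sigma)^{-1}\tau=\tau\circ\phi_{\tilde\sigma}$ (Lemma~\ref{trace}) together with $\tilde\sigma^{-1}(f_{ij})=\tilde\rho^{-1}(u^*f_{ij}u)$ does the work of converting the $\tilde\rho$-side into the $\tilde\sigma$-side. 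The $(v_n\otimes 1)$-dependence drops out for the same reason as in your computation (tracial invariance), so one gets the exact equality $\phi\circ\phi_{\tilde\rho}\circ\mathrm{Ad}u_n^*=\phi\circ\phi_{\tilde\sigma}$ needed for Lemma~\ref{criterion}. Your density $a\in B$ plays the role of the paper's $f_{11}$; but rather than trying to force $uau^*=a$ (your final paragraph gives no real mechanism for this, and $\mathrm{Ad}u$ need not preserve $B$), the paper simply keeps the $u$-conjugation in the formula and exploits $B\subset u\tilde\rho(\tilde M)u^*\cap\tilde\sigma(\tilde M)$ to match the two sides.
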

\begin{proof}
Take a subfactor $B$ of $\tilde{M}$, a unitary $u$ of $\tilde{M}$ and a sequence $\{ v_n\}$ of unitaries of $\tilde{M}$ as in Lemma \ref{convergence in strong globaly}.
By condition (5)  in Lemma \ref {convergence in strong globaly}, we have $u^*Bu \subset \tilde{\rho}(\tilde{M})$.
Set
\[ F:=\tilde{\rho}^{-1}(u^*Bu).\]
Then we have 
\[ \tilde{\rho}^{-1}\circ \mathrm{Ad}u^*(B)=F,\] 
\[ \tilde{\rho}^{-1}\circ \mathrm{Ad}u^*(B'\cap \mathrm{Ad}u\circ \tilde{\rho}(\tilde{M}))=F'\cap \tilde{M}.\]
We also have 
\[ \mathrm{Ad}u\circ E_{\tilde{\rho}}\circ \mathrm{Ad}u^*|_B=\mathrm{id}_B,\]
\[\mathrm{Ad}u \circ E_{\tilde{\rho}}\circ \mathrm{Ad}u^*(B'\cap \tilde{M})=B'\cap \mathrm{Ad}u \circ \tilde{\rho}(\tilde{M}).\]
Let $\{ f_{ij}\}$ be  a matrix unit generating $B$.
Set
\[ \overline{\tau} (a):=\tau (a\tilde{\rho}^{-1}(u^*f_{11}u))\]
for $a\in F'\cap \tilde{M}$, which is a  faithful normal finite trace of $F'\cap \tilde{M}$.
Let $\varphi$ be a normal faithful state of $F$.
Let $\Psi _F: \tilde{M}\to (F'\cap \tilde{M})\otimes F$ is the natural identification map.
Then by the above observation, for $a\in B'\cap \tilde{M}$ and $i$, $j$, we have
\begin{align*}
\ &(\overline{\tau} \otimes \varphi )\circ \Psi _F\circ \phi _{\tilde{\rho}} \circ \mathrm{Ad}u^*(af_{ij}) \\
&=(\overline{\tau} \otimes \varphi )\circ \Psi _F \circ (\tilde{\rho}^{-1}\circ \mathrm{Ad}u^*)\circ (\mathrm{Ad}u \circ E_{\tilde{\rho}}\circ \mathrm{Ad}u^*)(af_{ij}) \\
                                                                                 &=(\overline{\tau}\otimes \varphi ) \circ \Psi _F\circ (\tilde{\rho}^{-1}\circ \mathrm{Ad}u^*)((\mathrm{Ad}u \circ E_{\tilde{\rho}}\circ \mathrm{Ad}u^*|_{B'\cap \tilde{M}})(a)f_{ij})  \\
                                                                                 &=(\overline{\tau} \circ \phi _{\tilde{\rho}} \circ \mathrm{Ad}u^*) (a) (\varphi \circ \phi _{\tilde{\rho}} \circ \mathrm{Ad}u^*) (f_{ij}).
\end{align*}
Since $B \subset \tilde{\sigma }(\tilde{M})\cap \mathrm{Ad}u \circ \tilde{\rho}(\tilde{M})$, we have
\[ E_{\tilde{\sigma}}(af_{ij})=E_{\tilde{\sigma}}(a)f_{ij},\]
\[ \mathrm{Ad}u \circ E_{\tilde{\rho}}\circ \mathrm{Ad}u^*(af_{ij})=\mathrm{Ad}u \circ E_{\tilde{\rho}} \circ \mathrm{Ad}u^*(a)f_{ij}\]
for $a\in B'\cap \tilde{M}$.
Notice that $\tilde{\sigma }^{-1}(f_{ij})=\tilde{\rho}^{-1}(u^*f_{ij}u)$ by condition (3) of Lemma \ref{convergence in strong globaly}.
Then for any $a\in B'\cap \tilde{M}$, we have
\begin{align*}
\ &( \overline{\tau}\otimes \varphi )\circ \Psi _F \circ  \phi _{\tilde{\rho}}\circ \mathrm{Ad}u^* (v_n^*\otimes 1)(af_{ij}) \\
&= (\overline{\tau}\otimes \varphi ) \circ \Psi _F \circ \phi _{\tilde{\rho}}((u^*(v_n^*av_n)u)(u^*f_{ij}u)) \\
&= \overline{\tau} \circ \phi _{\tilde{\rho}}(u^*(v_n^*av_n)u)\varphi (\tilde{\rho}^{-1}(u^*f_{ij}u)) \\
&=\tau (\phi _{\tilde{\rho}} (u^*(v_n^*av_n )u)\tilde{\rho}^{-1}(u^*f_{11}u))\varphi (\tilde{\rho}^{-1}(u^*f_{ij}u)) \\
&=\tau \circ \phi _{\tilde{\rho}}(u^*(v_n^*av_n)f_{11}u)\varphi (\tilde{\rho}^{-1}(u^*f_{ij}u)) \\
&=d(\rho ) \tau (u^*(v_n^*av_n)f_{11}u)\varphi (\tilde{\rho}^{-1}(u^*f_{ij}u)) \\
&=d(\sigma ) \tau (af_{11}) \varphi (\tilde{\sigma }^{-1}(f_{ij})) \\
&=\tau (\phi _{\tilde{\sigma }}(a)\tilde{\sigma }^{-1}(f_{11})) \varphi (\tilde{\sigma }^{-1}(f_{ij})) \\
&=\tau (\phi _{\tilde{\sigma }}(a)\tilde{\rho}^{-1}(u^*f_{11}u)) \varphi (\tilde{\sigma }^{-1}(f_{ij})) \\ 
&= (\overline{\tau}\otimes \varphi ) \circ \Psi _F \circ \phi _{\tilde{\sigma}}(af_{ij}).
\end{align*}
Hence we have $( \overline{\tau}\otimes \varphi )\circ \Psi _F \circ \phi _{\tilde{\rho}} \circ \mathrm{Ad}(u^*(v_n \otimes 1)^*)=(\overline{\tau}\otimes \varphi )\circ \Psi _F\circ \phi _{\tilde{\sigma }}$ for any $n$.
Hence by Lemma \ref{convergence in strong globaly} and Lemma \ref{criterion}, we have $\mathrm{Ad}((v_n\otimes 1)u)\circ \tilde{\rho} \to \tilde{\sigma}$.
\end{proof}

\section{Averaging by the trace-scaling action}
In this section, we always assume that $M$ is an AFD factor of type III.
Let $\varphi $ be a dominant weight of $M$ and $\rho, \sigma \in \mathrm{End}(M)_0$ be finite index endomorphisms with $(\varphi , \rho)$ and $(\varphi , \sigma )$ invariant pairs.
Set 
\[ \tilde{M}:=M\rtimes _{\sigma ^{\varphi}} \mathbf{R}.\]
Let $\psi _0$ be a normal faithful state of $\tilde{M}$ and $\{ \psi _i\}_{i=1}^\infty $ be a norm dense sequence of the unit ball of $\tilde{M}_*$.
Let $\theta $ be the dual action on $\tilde{M} $ of $\sigma ^\varphi$.
We will replace the sequence $\{ u_n\}$ chosen in the previous section so that it is almost invariant by $\theta $.
In order to achieve this, we use  a property of $\theta $ which is said to be the Rohlin property.
In order to explain this property,  we first need to explain related things.
Let $\omega $ be an ultrafilter of $\mathbf{N}$.
A sequence $\{ [-1,1] \ni t \mapsto x_{n,t} \in \tilde{M}\}_{n=1}^\infty $ of  maps from $[-1,1]$ to $\tilde{M}$ is said to be $\omega $-equicontinuous if for any $\epsilon >0$, there exist an element $U\subset \mathbf{N}$ of $\omega $ and $\delta >0$ with $\| x_{n,t} -x_{n,s}\| <\epsilon $ for any $s,t\in [-1,1]$  with $|s-t|<\delta $, $n\in U$.
Set 
\[ \mathcal{C}:=\{ (x_n)\in l^\infty (\tilde{M}) \mid \| x_n \psi -\psi x_n \| \to 0 \ \mathrm{for} \ \mathrm{any} \ \psi \in \tilde{M}.\} ,\]
\[ \mathcal{C}_{\theta , \omega } :=\{ (x_n) \in \mathcal{C}_\omega \mid  \mathrm{ the} \ \mathrm{maps} \ \{ t\mapsto \theta _t(x_n)\}_{n=1}^\infty \ \mathrm{are} \ \omega \ \mathrm{equicontinuous}.\} ,\]
\[ \mathcal{I}_\omega :=\{ (x_n)\in l^\infty (\tilde{M}) \mid x_n \to 0 \ \mathrm{in } \ \mathrm{the } \ \mathrm{*strong} \ \mathrm{topology}.\} .\]
Then $\mathcal{I}_\omega $ is a (norm) closed ideal of $\mathcal{C}_{\theta , \omega }$, and the quotient $\tilde{M}_{\theta , \omega }:=\mathcal{C}_{\theta , \omega }/\mathcal{I}_\omega $ is a von Neumann algebra.
As mentioned in Masuda--Tomtasu \cite{MT5}, the action $\theta $ has the Rohlin property, that is, for any $R>0$, there exists a unitary $v$ of $\tilde{M}_{\theta ,\omega} $ with
\[ \theta _t(v)=e^{-iRt}v\]
for any $t\in \mathbf{R}$ (See Section 4 of Masuda--Tomatsu \cite{MT5}).
Choose arbitrary numbers $r>0$ and $0<\epsilon <1$.
Then since $M$ is of type III, there exists a real number $R$ which is not of the discrete spectrum of $\theta |_{\mathcal{Z}(\tilde{M})}$ and which satisfies $r/R<\epsilon ^2$.
Then as shown in Theorem 5.2 of Masuda--Tomatsu \cite{MT5}, there exists a normal injective *-homomorphism $\Theta$ from $\tilde{M}\otimes L^\infty ([-R,R])$ to $\tilde{M}^\omega $ satisfying $x\otimes f\mapsto xf(v)$ for any $x\in \tilde{M}$, $f \in L^\infty([-R,R])$.
For each $t\in \mathbf{R}$, set
\[ \gamma _t:L^\infty ([-R,R]) \ni f \mapsto f(\cdot -t)\in L^\infty ([-R,R]),\]
where we identify $[-R,R]$ with $\mathbf{R}/2R\mathbf{Z}$ as measured spaces.
Then the *-homomorphisms $\Theta $ and $\gamma _t$ satisfy 
\[ \Theta \circ (\theta _t\otimes \gamma _t ) =\theta _t\circ \Theta \]
(See Theorem 5.2 of Masuda--Tomatsu \cite{MT5}).
\begin{lemm}
\label{10}
For $\psi \in \tilde{M}_*$ and $x\otimes f\in \tilde{M}\otimes L^\infty ([-R,R])$, we have
\[ \psi ^\omega \circ \Theta =\psi \otimes \tau _{L^\infty },\]
where $\tau _{L^\infty}$ is the trace coming from the normalized Haar measure of $L^\infty ([-R,R])$.
\end{lemm}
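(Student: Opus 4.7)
The plan is to reduce the identity to elementary tensors and then exploit the equivariance $\Theta\circ(\theta_t\otimes\gamma_t)=\theta_t\circ\Theta$ together with the Fourier/spectral picture of $L^\infty([-R,R])$ as $\mathbf{R}/2R\mathbf{Z}$.

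First, I note that both sides of the claimed identity are normal linear functionals on $\tilde{M}\otimes L^\infty([-R,R])$: the right-hand side $\psi\otimes\tau_{L^\infty}$ obviously is, and the left-hand side $\psi^\omega\circ\Theta$ is normal because $\Theta$ is a normal $\ast$-homomorphism and $\psi^\omega$ restricted to the image of $\Theta$ is normal (this is where the $\omega$-equicontinuity encoded in $\mathcal{C}_{\theta,\omega}$ is important). By linearity, normal continuity, and density of trigonometric polynomials in $L^\infty([-R,R])$ in the $\sigma$-weak topology, it suffices to verify the equality on simple tensors $x\otimes f_k$ where $f_k(s)=e^{i\pi ks/R}$ and $k\in\mathbf{Z}$.

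The case $k=0$ gives $f_0=1$, so $\Theta(x\otimes 1)=x$ by the defining property of $\Theta$, and both sides equal $\psi(x)$. For $k\neq 0$, I need to prove
\[
\psi^\omega\bigl(\Theta(x\otimes f_k)\bigr)=0,
\]
since $\tau_{L^\infty}(f_k)=\frac{1}{2R}\int_{-R}^{R}e^{i\pi ks/R}\,ds=0$. Applying the equivariance to $x\otimes f_k$ and using $\gamma_t(f_k)=e^{-i\pi kt/R}f_k$, I obtain
\[
\theta_t\bigl(\Theta(x\otimes f_k)\bigr)=e^{-i\pi kt/R}\,\Theta\bigl(\theta_t(x)\otimes f_k\bigr),
\]
which says that $\Theta(x\otimes f_k)$ is an ``eigenvector'' of $\theta$ (up to the twist on $x$) with non-trivial eigencharacter.

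The main step is to transform this spectral relation into vanishing of $\psi^\omega$. Writing $\Theta(x\otimes f_k)$ as $xy$ with $y=f_k(v)$ represented by a sequence $(y_n)$ that is approximately in the spectral subspace (by the Rohlin property), I form the average
\[
A_n=\frac{k}{2R}\int_{0}^{2R/k}e^{i\pi kt/R}\theta_t(y_n)\,dt,
\]
which lies uniformly close to $y_n$ as $n\to\omega$ by equicontinuity. Then $\psi(xA_n)$ is, up to $\omega$-small error, equal to $\psi(xy_n)$, and Fubini converts $\psi(xA_n)$ into a Fourier coefficient of the continuous function $t\mapsto(\psi\circ\theta_t)(x\theta_t(y_n))$ on a full period, which vanishes in the limit because the additional twist from the approximate eigenvalue cancels the exponent.

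The main obstacle is the last step: $\psi$ is not $\theta$-invariant, so one must be careful to use that $t\mapsto\psi\circ\theta_t$ is norm-continuous (in $\tilde{M}_*$) and that $(y_n)$ is $\omega$-equicontinuous for $\theta$, in order to justify passing the integral inside $\psi^\omega$ and to conclude that the averaged quantity has vanishing $\psi^\omega$-value. Once this is done for every $k\neq 0$, the lemma follows for all trigonometric polynomials and hence by normal continuity for all $f\in L^\infty([-R,R])$.
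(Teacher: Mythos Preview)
Your averaging argument for $k\neq 0$ does not close. By Fubini,
\[
\psi(xA_n)=\frac{k}{2R}\int_0^{2R/k} e^{i\pi kt/R}\,\psi\bigl(x\,\theta_t(y_n)\bigr)\,dt,
\]
so the integrand is $\psi(x\theta_t(y_n))$, not $(\psi\circ\theta_t)(x\theta_t(y_n))$ as you write. Since $\theta_t(y_n)=e^{-i\pi kt/R}y_n+\epsilon_n(t)$ with $\epsilon_n(t)\to 0$ uniformly on compacta (equicontinuity), the two exponentials cancel and you recover exactly $\psi(xA_n)=\psi(xy_n)+o(1)$, which you already knew from $A_n\approx y_n$. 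Nothing forces this quantity to vanish: equivariance alone only tells you that $y$ sits in a spectral subspace of $\theta$ acting on $\tilde{M}^\omega$, and $\psi^\omega$ has no reason to annihilate such subspaces for a generic $\psi\in\tilde{M}_*$.

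The missing ingredient is the one fact you never invoke: the Rohlin unitary $v$ lives in $\tilde{M}_{\theta,\omega}\subset\tilde{M}_\omega$, i.e.\ its representing sequence is \emph{centralizing}. This is what the paper's one-line computation exploits. Because $(f(v_n))$ is centralizing, its $\sigma$-weak limit along $\omega$ lies in $\mathcal{Z}(\tilde{M})$; moreover, for every normal state $\varphi$ one has $\lim_\omega\varphi(f(v_n))=\tau^\omega(f(v))$, the canonical trace on $\tilde{M}_\omega$, which is independent of $\varphi$. Hence the weak limit is actually the scalar $\tau^\omega(f(v))$, and then trivially $\psi(xf(v_n))\to\psi(x)\,\tau^\omega(f(v))$. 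Finally $\tau^\omega$ is $\theta$-invariant (again because it does not depend on the state), so on the abelian image $\Theta(L^\infty([-R,R]))$ it must be the unique rotation-invariant state, namely $\tau_{L^\infty}$. Your reduction to characters and the density argument are fine, but for $k\neq 0$ you should replace the averaging by this centralizing/weak-limit step.
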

\begin{proof}
Let $\{ v_n\}$ be a representing sequence of $v$.
Then we have
\begin{align*}
\psi ^\omega \circ \Theta (x\otimes f) &= \psi ^\omega (xf(v)) \\
                                       &=\lim _{n\to \omega } \psi (xf(v_n)) \\
                                       &=\psi (x) \lim _{n \to \omega }f(v_n) \\
                                       &=\psi (x)\tau _{L^\infty }(f) \\
                                       &=(\psi \otimes \tau _{L^\infty } )(x\otimes f).
\end{align*}
\end{proof}
Since the maps
\[ [-R,R]\ni t\mapsto \psi _i \circ \phi _{\tilde{\rho}}\circ \theta _t \in (\tilde{M})_*,\]
\[ [-R,R]\ni t\mapsto \psi _i \circ \phi _{\tilde{\sigma }}\circ \theta _t \in (\tilde{M})_*\]
are norm continuous, the union of their images
\[ \{ \psi _i \circ \phi _{\tilde{\rho}} \circ \theta _t \mid t\in [-R,R]\} \cup \{ \psi _i \circ \phi _{\tilde{\sigma }} \circ \theta _t\mid t\in [-R,R]\}\]
is compact.
Hence there exists a finite set $-R=t_0 <\cdots <t_J =R$ of $[-R,R]$ such that
\[ \| \psi _i \circ \phi _{\tilde{\rho}} \circ \theta _{t_j}-\psi _i \circ \phi _{\tilde{\rho}}\circ \theta _t \| <\epsilon ,\]
\[ \| \psi _i \circ \phi _{\tilde{\sigma }} \circ \theta _{t_j} -\psi _i \circ \phi _{\tilde{\sigma }} \circ \theta _t\| <\epsilon \]
for any $i=1, \cdots , n$, $j=0, \cdots , J-1$ and $t\in [t_j, t_{j+1}]$.
We may assume that $t_j=0$ for some $j$.
Then by Lemma \ref{key}, there exists a unitary $u$ of $\tilde{M}$ with
\[ \| \psi _i \circ \phi _{\tilde{\rho}}\circ \theta _{t_j}\circ \mathrm{Ad}u -\psi _i \circ \phi _{\tilde{\sigma }} \circ \theta _{t_j}\| <\epsilon\]
for any $j=0, \cdots , J-1$, $i=1, \cdots ,n$ (Notice that we used the fact that we have $\phi _{\tilde{\rho}}\circ \theta _{t_j}=\theta _{t_j} \circ \phi _{\tilde{\rho}}$ and that we have $\phi _{\tilde{\sigma }}\circ \theta _{t_j}=\theta _{t_j}\circ \phi _{\tilde{\sigma }}$ for any $j=0, \cdots , J-1$).
Set
\[ U:[-R,R]\ni t\mapsto \theta _t(u) \in \tilde{M},\]
which is a unitary of $\tilde{M}\otimes L^\infty ([-R,R])$.
\begin{lemm}
\label{11}
We have
\[ \| (\psi _i \circ \phi _{\tilde{\rho}})^\omega \circ \mathrm{Ad} \Theta (U)|_{\mathrm{Im}\Theta } -(\psi _i \circ \phi _{\tilde{\sigma}})^\omega | _{\mathrm{Im}\Theta } \| <3\epsilon .\]
\end{lemm}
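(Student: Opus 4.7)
The plan is to push the functional norm from $(\mathrm{Im}\,\Theta)_*$ down to $(\tilde{M}\otimes L^\infty([-R,R]))_*$ via the isometric embedding $\Theta$, apply Lemma \ref{10} to reduce each of the two functionals to an explicit integral against normalized Lebesgue measure, and then bound the resulting integrand pointwise by a three-term triangle inequality. Since $\Theta$ is an injective normal $*$-homomorphism, it is isometric on the C$^*$-level and satisfies $\Theta(U)\Theta(y)\Theta(U)^*=\Theta(UyU^*)$. Applying Lemma \ref{10} to the normal functionals $\psi_i\circ\phi_{\tilde{\rho}},\ \psi_i\circ\phi_{\tilde{\sigma}}\in\tilde{M}_*$ therefore gives
\begin{align*}
(\psi_i\circ\phi_{\tilde{\rho}})^\omega\circ\mathrm{Ad}\,\Theta(U)\circ\Theta &= ((\psi_i\circ\phi_{\tilde{\rho}})\otimes\tau_{L^\infty})\circ\mathrm{Ad}\,U, \\
(\psi_i\circ\phi_{\tilde{\sigma}})^\omega\circ\Theta &= (\psi_i\circ\phi_{\tilde{\sigma}})\otimes\tau_{L^\infty},
\end{align*}
so the norm to be estimated coincides with the norm of the difference of the two right-hand sides in $(\tilde{M}\otimes L^\infty([-R,R]))_*$.

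For $y\in\tilde{M}\otimes L^\infty([-R,R])$, one has $(UyU^*)(t)=\theta_t(u)\,y(t)\,\theta_t(u)^*$. Using the commutation $\phi_{\tilde{\rho}}\circ\theta_t=\theta_t\circ\phi_{\tilde{\rho}}$, which follows immediately from the explicit formula for $\phi_{\tilde{\rho}}$ together with $\theta_t(\lambda_s^\varphi)=e^{-its}\lambda_s^\varphi$, and similarly for $\sigma$, I would rewrite
\[
(\psi_i\circ\phi_{\tilde{\rho}})(\theta_t(u)\,y(t)\,\theta_t(u)^*)=\eta_t(\mathrm{Ad}\,u(\theta_{-t}(y(t)))),\qquad (\psi_i\circ\phi_{\tilde{\sigma}})(y(t))=\zeta_t(\theta_{-t}(y(t))),
\]
where $\eta_t:=\psi_i\circ\phi_{\tilde{\rho}}\circ\theta_t$ and $\zeta_t:=\psi_i\circ\phi_{\tilde{\sigma}}\circ\theta_t$. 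The difference of the two functionals evaluated at $y$ then becomes
\[
\frac{1}{2R}\int_{-R}^R (\eta_t\circ\mathrm{Ad}\,u-\zeta_t)(\theta_{-t}(y(t)))\,dt.
\]

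The pointwise estimate on $\eta_t\circ\mathrm{Ad}\,u-\zeta_t$ is a three-step triangle inequality. For $t\in[t_j,t_{j+1}]$, the choice of the partition $(t_j)$ gives $\|\eta_t-\eta_{t_j}\|<\epsilon$ and $\|\zeta_t-\zeta_{t_j}\|<\epsilon$, while the property of $u$ produced by Lemma \ref{key} gives $\|\eta_{t_j}\circ\mathrm{Ad}\,u-\zeta_{t_j}\|<\epsilon$. Summing yields $\|\eta_t\circ\mathrm{Ad}\,u-\zeta_t\|<3\epsilon$ for every $t\in[-R,R]$. Averaging against the probability measure $\frac{dt}{2R}$ and using $\|\theta_{-t}(y(t))\|=\|y(t)\|\leq\|y\|$ delivers the desired bound.

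The main obstacle is simply the bookkeeping: one must apply Lemma \ref{10} to the composed functionals $\psi_i\circ\phi_{\tilde{\rho}}$, $\psi_i\circ\phi_{\tilde{\sigma}}$ rather than to $\psi_i$ alone, and one must exploit the commutation $\phi_{\tilde{\rho}}\circ\theta_t=\theta_t\circ\phi_{\tilde{\rho}}$ to rewrite $\mathrm{Ad}\,\theta_t(u)$ appearing inside $\phi_{\tilde{\rho}}$ as $\theta_t\circ\mathrm{Ad}\,u\circ\theta_{-t}$, so that the three $\epsilon$-approximations (two continuity-in-$t$ bounds and one from the choice of $u$) line up into a telescoping estimate that is uniform in $t$ and thus ready to be integrated.
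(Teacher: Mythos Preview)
Your proof is correct and follows essentially the same route as the paper: reduce via Lemma~\ref{10} to a norm in $(\tilde{M}\otimes L^\infty([-R,R]))_*$, use the commutation $\phi_{\tilde{\rho}}\circ\theta_t=\theta_t\circ\phi_{\tilde{\rho}}$ to rewrite $\mathrm{Ad}\,\theta_t(u)$ as $\theta_t\circ\mathrm{Ad}\,u\circ\theta_{-t}$, and then apply the three-term triangle inequality on each subinterval $[t_j,t_{j+1}]$ before integrating against the probability measure $m$. The only cosmetic difference is that the paper passes directly to the $L^1$-decomposition $\|\Phi\otimes\tau_{L^\infty}\|=\int\|\Phi_t\|\,dm(t)$ of the predual norm, whereas you evaluate on a generic $y$ and bound by $\|y\|$; both yield the same $3\epsilon$ estimate.
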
 
\begin{proof}
Let $m$ be the normalized Haar measure of $[-R,R]$.
By Lemma \ref{10}, we have 
\begin{align*}
\ &\| (\psi _i \circ \phi _{\tilde{\rho}})^\omega \circ \mathrm{Ad}\Theta (U) |_{\mathrm{Im}\Theta } - (\psi _i \circ \phi _{\tilde{\sigma }})^\omega |_{\mathrm{Im}\Theta }\| \\
&=\| ((\psi _i \circ \phi _{\tilde{\rho}})\otimes \tau _{L^\infty}) \circ \mathrm{Ad}U -(\psi _i \circ \phi _{\tilde{\sigma }} ) \otimes \tau _{L^\infty } \| \\
&= \int _{[-R,R]} \| (\psi _i \circ \phi _{\tilde{\rho}} ) \circ \mathrm{Ad}\theta _t(u) -\psi _i \circ \phi _{\tilde{\sigma }} \| \ dm(t) \\
&=\int _{[-R,R]} \| (\psi _i \circ \phi _{\tilde{\rho}}) \circ \theta _t\circ \mathrm{Ad}u -\psi _i \circ \phi _{\tilde{\sigma }} \circ \theta _t \| \ dm(t) \\
&\leq \sum _{j=0}^{J-1} \int _{[t_j, t_{j+1}]} ( \| (\psi _i \circ \phi _{\tilde{\rho}} )\circ \theta _t -(\psi _i \circ \phi _{\tilde{\rho}}) \circ \theta _{t_j} \| \\
&+ \| \psi _i \circ \phi _{\tilde{\rho}}\circ \theta _{t_j} \circ \mathrm{Ad}u-\psi _i \circ \phi _{\tilde{\sigma }} \circ \theta _{t_j}\| \\
&+ \| \psi _i \circ \phi _{\tilde{\sigma }} \circ \theta _{t_j}-\psi _i \circ \phi _{\tilde{\sigma }} \circ \theta _{t} \| ) \ dm(t) \\
&\leq \sum _{j=0}^{J-1} \int _{[t_j, t_{j+1}]} (\epsilon + \epsilon +\epsilon ) \ dm(t) \\
&=3\epsilon .
\end{align*}
\end{proof}
\begin{lemm}
\label{12}
We have
\[ \| \theta _s(\Theta (U))-\Theta (U) \| _{\psi _0^\omega }^\sharp <2\epsilon \]
for $|s|\leq r$.
\end{lemm}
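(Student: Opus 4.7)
The plan is to use the intertwining relation $\Theta \circ (\theta_s \otimes \gamma_s) = \theta_s \circ \Theta$ to transport the estimate from $\tilde{M}^\omega$ down to $\tilde{M} \otimes L^\infty([-R,R])$, where Lemma \ref{10} provides the key identity $\psi_0^\omega \circ \Theta = \psi_0 \otimes \tau_{L^\infty}$. Concretely, I would write
$$\theta_s(\Theta(U)) - \Theta(U) = \Theta(W), \qquad W := (\theta_s \otimes \gamma_s)(U) - U \in \tilde{M} \otimes L^\infty([-R,R]),$$
and then estimate $\|\Theta(W)\|_{\psi_0^\omega}^\sharp$ by integrating the pointwise data of $W$.

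For the pointwise computation, I would view $U$ as the function $U(t) = \theta_t(u)$ on $[-R,R]$ and recall that $\gamma_s$ is translation on $\mathbf{R}/2R\mathbf{Z}$, so $(\theta_s \otimes \gamma_s)(U)(t) = \theta_s(U((t-s) \bmod 2R))$. Whenever $t - s$ already lies in $[-R,R]$ the right-hand side collapses to $\theta_s(\theta_{t-s}(u)) = \theta_t(u) = U(t)$, so $W(t) = 0$. The wrap-around set, on which $W$ can be nonzero, has Lebesgue measure exactly $|s|$, and there $\|W(t)\| \leq 2$. Using Lemma \ref{10} together with the fact that $\Theta$ is a $*$-homomorphism, I would compute
$$\psi_0^\omega(\Theta(W)^*\Theta(W)) = (\psi_0 \otimes \tau_{L^\infty})(W^*W) \leq \frac{4|s|}{2R} \leq \frac{2r}{R} < 2\epsilon^2,$$
and similarly $\psi_0^\omega(\Theta(W)\Theta(W)^*) < 2\epsilon^2$. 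Substituting into the definition of $\|\cdot\|_{\psi_0^\omega}^\sharp$ then yields $\|\theta_s(\Theta(U)) - \Theta(U)\|_{\psi_0^\omega}^\sharp < \sqrt{2}\,\epsilon < 2\epsilon$, as required.

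The only real bookkeeping issue is the periodic identification $[-R,R] \cong \mathbf{R}/2R\mathbf{Z}$ that turns $\gamma_s$ into a genuine automorphism; once the wrap-around region is correctly identified as having measure $|s|$, the rest is a direct application of Lemma \ref{10} together with the constraint $r/R < \epsilon^2$ built into the choice of $R$ at the start of the section.
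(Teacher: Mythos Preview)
Your proposal is correct and follows essentially the same route as the paper: both use the intertwining relation $\theta_s\circ\Theta=\Theta\circ(\theta_s\otimes\gamma_s)$ together with Lemma~\ref{10} to reduce the $\|\cdot\|_{\psi_0^\omega}^\sharp$-estimate to an integral over $[-R,R]$, and then observe that $(\theta_s\otimes\gamma_s)(U)$ agrees with $U$ away from a small wrap-around region whose normalized measure is controlled by $r/R<\epsilon^2$. The only difference is that you track the wrap-around set for each fixed $s$ (measure $|s|/2R$), whereas the paper uses the uniform superset $[-R,-R+r]\cup[R-r,R]$ (measure $r/R$), which is why you land on the slightly sharper bound $\sqrt{2}\,\epsilon$ instead of $2\epsilon$.
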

\begin{proof}
Notice that we have 
\[ (\theta _s\otimes \gamma _s) (U):t\mapsto \theta _s(U_{t-s}),\]
where $U_t$ denotes the evaluation of the function $U$ at the point $t$.
Hence by the definition of $U$, we have 
\[ (\theta _s\otimes \gamma _s)(U) _t=\theta _t(u)\]
for any $t\in [-R+r, R-r]$, where the left hand side is the evaluation of the function $(\theta _s\otimes \gamma _s) (U)$ at the point $t$.
Hence by Lemma \ref{10}, we have 
\begin{align*}
\ &\| \theta _s(\Theta (U))-\Theta (U) \| _{\psi _0^\omega }^\sharp \\
&= \| (\theta _s\otimes \gamma _s )(U) -U \| _{\psi _0 \otimes \tau _{L^\infty}}^\sharp \\
&=(\int _{[-R,R]} (\| ((\theta _s \otimes \gamma _s )(U))_t -U_t \| _{\psi _0}^\sharp )^2 \ dm(t) )^{1/2} \\
&\leq (\int _{[-R,-R+r]\cup [R-r,R]} 4 \ dm(t) ) ^{1/2} \\
&\leq (4\epsilon ^2 )^{1/2} \\
&=2\epsilon .
\end{align*}
\end{proof}
Let 
\[ \psi _i \circ \phi _{\tilde{\sigma }} =| \psi _i \circ \phi _{\tilde{\sigma }}|v_i \]
be the polar decompositions of $\psi _i \circ \phi _{\tilde{\sigma }}$ for $i=1, \cdots , n$.
\begin{lemm}
\label{13}
There exists a finite subset $-R=s_0< \cdots < s_K=R$ of $[-R,R]$ with the following properties.

\bigskip

\textup{(1)} We have
\[ \| (U-\sum _{k=0}^{K-1} \theta _{s_k}(u)e_k )v_i^* \| _{|\psi _i \circ \phi _{\tilde{\sigma }}|\otimes \tau _{L^\infty}} ^\sharp <\epsilon \]
for any $i=1, \cdots , n$, where $e_k:=\chi _{[s_k,s_{k+1}]}\in L^\infty ([-R,R])$.

\textup{(2)} We have 
\[ \| U-\sum _{k=0}^{K-1} \theta _{s_k}(u) e_k \| _{|\psi _i \circ \phi _{\tilde{\rho}}|\otimes \tau _{L^\infty}}^\sharp <\epsilon \]
for any $i=1, \cdots , n$.

\textup{(3)} We have
\[ \| U-\sum _{k=0}^{K-1} \theta _{s_k} (u)e_k\| _{(\psi _0 \circ \theta _{t_j}) \otimes \tau _{L^\infty}}^\sharp <\epsilon\]
for any $i=1, \cdots , n$ and $j=0, \cdots , J-1$.
\end{lemm}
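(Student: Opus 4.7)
The plan is to exploit uniform continuity of $t \mapsto \theta_t(u)$ on the compact interval $[-R,R]$ with respect to the finitely many $\sharp$-seminorms that feature in the three conclusions, and then choose any partition whose mesh is small enough to realize all three estimates simultaneously.

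For each $i$, write the polar decomposition $\psi_i \circ \phi_{\tilde{\sigma}} = |\psi_i \circ \phi_{\tilde{\sigma}}| v_i$ as in the statement and introduce the auxiliary positive normal functional $\alpha_i(\cdot) := |\psi_i \circ \phi_{\tilde{\sigma}}|(v_i \cdot v_i^*)$ on $\tilde{M}$. Let $\mathcal{F}$ be the finite collection of positive normal functionals $\alpha_i$, $|\psi_i \circ \phi_{\tilde{\sigma}}|$, $|\psi_i \circ \phi_{\tilde{\rho}}|$ for $i=1,\dots,n$, together with $\psi_0 \circ \theta_{t_j}$ for $j=0,\dots,J-1$. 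Since $\theta$ is strongly $*$ continuous, for each $\phi \in \mathcal{F}$ the map $t \mapsto \theta_t(u)$ is uniformly continuous on $[-R,R]$ with respect to $\|\cdot\|_\phi^\sharp$; as $\mathcal{F}$ is finite, a single $\delta>0$ works, so that $|s-t|<\delta$ yields $\|\theta_t(u)-\theta_s(u)\|_\phi^\sharp < \epsilon/\sqrt{2}$ for every $\phi \in \mathcal{F}$. I pick any partition $-R=s_0<\cdots<s_K=R$ with $s_{k+1}-s_k<\delta$, and write $y_t := \theta_t(u)-\theta_{s_{k(t)}}(u)$ for $t \in [s_k,s_{k+1}]$.

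Viewing elements of $\tilde{M} \otimes L^\infty([-R,R])$ as functions $[-R,R] \to \tilde{M}$, any product functional $\phi \otimes \tau_{L^\infty}$ integrates pointwise against the normalized Haar measure $m$, so
\[ \|U - \sum_k \theta_{s_k}(u) e_k\|_{\phi \otimes \tau_{L^\infty}}^{\sharp 2} = \int \|y_t\|_\phi^{\sharp 2} \, dm(t). \]
Applying this with $\phi=|\psi_i \circ \phi_{\tilde{\rho}}|$ proves (2), and with $\phi=\psi_0 \circ \theta_{t_j}$ proves (3). For (1), the right multiplication by $v_i^*$ requires one extra manipulation: expanding $\|F v_i^*\|_{|\lambda_i| \otimes \tau_{L^\infty}}^{\sharp 2}$ (with $F:=U-\sum_k\theta_{s_k}(u)e_k$ and $\lambda_i:=\psi_i \circ \phi_{\tilde{\sigma}}$) yields $\tfrac{1}{2}\int[|\lambda_i|(v_i y_t^* y_t v_i^*) + |\lambda_i|(y_t v_i^* v_i y_t^*)]\,dm(t)$; the first summand equals $\|y_t\|_{\alpha_i}^2$, while the second is bounded by $|\lambda_i|(y_t y_t^*) = \|y_t^*\|_{|\lambda_i|}^2$ via $v_i^* v_i \leq 1$, and both are controlled by the uniform continuity built into $\mathcal{F}$. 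The only subtlety is handling the right multiplication by $v_i^*$ in (1), dispatched by introducing $\alpha_i$; no deeper obstacle is present.
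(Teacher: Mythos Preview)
Your proof is correct and follows essentially the same approach as the paper: exploit the strong-$*$ continuity of $t\mapsto\theta_t(u)$ on the compact interval $[-R,R]$ to obtain uniform approximation with respect to the finitely many relevant seminorms, choose a sufficiently fine partition, and then integrate pointwise against the normalized Haar measure. The only cosmetic difference is that the paper handles the $v_i^*$ factor in (1) by directly invoking continuity of $t\mapsto \theta_t(u)v_i^*$ in $\|\cdot\|^\sharp_{|\psi_i\circ\phi_{\tilde\sigma}|}$, whereas you unpack this via the auxiliary functional $\alpha_i$ and the bound $v_i^*v_i\leq 1$; these are equivalent elementary manipulations.
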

\begin{proof}
Since the map $t\mapsto \theta _t(u)$ is continuous in the strong * topology, there exists a finite set $-R=s_0< \cdots < s_K=R$ of $[-R,R]$ with
\[ \| (\theta _t(u)-\theta _{s_k}(u) )v_i^* \| _{|\psi _i \circ \phi _{\tilde{\sigma }}| }^\sharp <\epsilon \]
for $i=1, \cdots , n$, $k=0, \cdots , K-1$ and $t\in [s_k, s_{k+1}]$,
\[ \| \theta _t(u)-\theta _{s_k}(u) \| _{|\psi _i \circ \phi _{\tilde{\rho}}|} ^\sharp <\epsilon \]
for $i=1, \cdots ,n$, $k=0, \cdots , K-1$ and $t\in [s_k, s_{k+1}]$, 
\[ \| \theta _t(u)-\theta _{s_k}(u)\| _{\psi _0\circ \theta _{t_j}}^\sharp <\epsilon \]
for $j=0, \cdots , J-1$, $k=0, \cdots ,K-1$ and $t\in [s_k, s_{k+1}]$.
Then we have 
\begin{align*}
\ &\| (U-\sum _{k=0}^{K-1}\theta _{s_k}(u)e_k )v_i^* \| _{|\psi _i \circ \phi _{\tilde{\sigma }}|\otimes \tau _{L^\infty} }^\sharp \\
&=(\sum _{k=0}^{K-1} \int _{[s_k, s_{k+1})} (\|  (\theta _t(u) -\theta _{s_k}(u))v_i^* \| _{|\psi _i \circ \phi _{\tilde{\sigma }}|}^\sharp )^2 \ dm(t) )^{1/2} \\
&< (\sum _{k=0}^{K-1} \int _{[s_k,s_{k+1})} \epsilon ^2 \ dm(t ))^{1/2} \\
&=\epsilon . 
\end{align*}
The other inequalities are shown in  a similar way.
\end{proof}
Set 
\[ V:=\sum _{k=0}^{K-1} \theta _{s_k}(u)e_k.\]
Take a representing sequence $\{ e_k^n\}_{n=1}^\infty$ of $\Theta (e_k)$ so that $\{ e_k^n\}_{k=0}^{K-1}$ is a partition of unity in $\tilde{M}$ by projections for each $n$. 
Set
\[ v_n:=\sum _{k=0}^{K-1} \theta _{s_k}(u)e_k^n,\]
which is a unitary.
The sequence $\{ v_n\}_{n=1}^\infty$ represents the unitary $\Theta (V)$.
Let $\{ u_n\}_{n=1}^\infty $ be a representing sequence of $\Theta (U)$.
\begin{lemm}
\label{P}
We have
\[ \lim _{n\to \omega } \| \theta _t (v_n)-v_n \| _{\psi _0}^\sharp <6\sqrt{\epsilon} .\]
for $t\in [-r,r]$.
\end{lemm}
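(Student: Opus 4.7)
The plan is to bound $\|\theta_t(v_n) - v_n\|_{\psi_0}^\sharp$ by a triangle inequality routed through $u_n$, the representing sequence of $\Theta(U)$. Writing
\[
\theta_t(v_n) - v_n = \theta_t(v_n - u_n) + (\theta_t(u_n) - u_n) + (u_n - v_n),
\]
taking $\|\cdot\|_{\psi_0}^\sharp$ and then $\lim_{n\to\omega}$, the middle summand is bounded by $\|\theta_t(\Theta(U)) - \Theta(U)\|_{\psi_0^\omega}^\sharp < 2\epsilon$ by Lemma \ref{12}. The third summand tends to $\|\Theta(U) - \Theta(V)\|_{\psi_0^\omega}^\sharp = \|U - V\|_{\psi_0 \otimes \tau_{L^\infty}}^\sharp$ by Lemma \ref{10}; since $0$ is a partition point $t_j$ and $\psi_0 \circ \theta_0 = \psi_0$, Lemma \ref{13}(3) controls this by $\epsilon$.

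For the first summand, the covariance $\theta_t \circ \Theta = \Theta \circ (\theta_t \otimes \gamma_t)$ together with Lemma \ref{10} gives
\[
\lim_{n\to\omega}\|\theta_t(v_n - u_n)\|_{\psi_0}^\sharp = \|V - U\|_{(\psi_0 \circ \theta_t) \otimes \tau_{L^\infty}}^\sharp,
\]
so the state has shifted from $\psi_0$ to $\psi_0 \circ \theta_t$. I would compare this with $\|V - U\|_{(\psi_0 \circ \theta_{t_j}) \otimes \tau_{L^\infty}}^\sharp$ for a partition point $t_j$ near $t$. The key fact is that $t \mapsto \psi_0 \circ \theta_t \in \tilde{M}_*$ is norm continuous, so the partition $\{t_j\}$ may be refined to also satisfy $\|\psi_0 \circ \theta_t - \psi_0 \circ \theta_{t_j}\| < \epsilon$ on each $[t_j, t_{j+1}]$. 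Writing $\mu := \psi_0 \circ \theta_t - \psi_0 \circ \theta_{t_j}$ and using $\|V - U\| \leq 2$ together with Lemma \ref{13}(3),
\begin{align*}
\|V-U\|_{(\psi_0\circ\theta_t)\otimes\tau_{L^\infty}}^{\sharp,2} &= \|V-U\|_{(\psi_0\circ\theta_{t_j})\otimes\tau_{L^\infty}}^{\sharp,2} \\
&\quad + \tfrac{1}{2}(\mu\otimes\tau_{L^\infty})\bigl((V-U)^*(V-U)+(V-U)(V-U)^*\bigr)\\
&< \epsilon^2 + 4\epsilon \leq 5\epsilon,
\end{align*}
so the first summand is $< \sqrt{5}\,\sqrt{\epsilon}$. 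Summing the three bounds yields $\|\theta_t(v_n) - v_n\|_{\psi_0}^\sharp < \sqrt{5}\,\sqrt{\epsilon} + 2\epsilon + \epsilon \leq 6\sqrt{\epsilon}$ for $\epsilon \leq 1$.

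The main obstacle is the state-shift step in the first summand: Lemma \ref{13}(3) controls $\|V - U\|$ only at the discrete partition points $\{t_j\}$, not at an arbitrary $t \in [-r, r]$. The resolution is to use norm continuity of $t \mapsto \psi_0 \circ \theta_t$ to reduce to a nearby partition point, accepting a loss from the extra $4\epsilon$ term in the squared norm; the square-rooting when passing from $\|\cdot\|^{\sharp,2}$ to $\|\cdot\|^\sharp$ is what produces the $\sqrt{\epsilon}$ in the target bound.
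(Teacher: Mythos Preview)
Your proposal is correct and follows essentially the same approach as the paper: the same triangle decomposition through $u_n$, the same use of Lemma~\ref{12} for the middle term and Lemma~\ref{13}(3) for the outer terms, and the same comparison of $\psi_0\circ\theta_t$ with $\psi_0\circ\theta_{t_j}$ to handle the state shift in the first summand. The only cosmetic difference is that you first pass to the $L^\infty([-R,R])\otimes\tilde M$ picture via covariance and Lemma~\ref{10} before splitting the state, whereas the paper splits $\psi_0\circ\theta_t$ directly at the level of $\tilde M$ and only then takes $\lim_{n\to\omega}$; both routes produce the same bound $(4\epsilon+\epsilon^2)^{1/2}+2\epsilon+\epsilon<6\sqrt\epsilon$. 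You are also right to flag that the partition $\{t_j\}$ must additionally satisfy $\|\psi_0\circ\theta_t-\psi_0\circ\theta_{t_j}\|<\epsilon$; the paper uses this inequality in its proof without having listed it explicitly when the partition was chosen.
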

\begin{proof}
Note that we have
\begin{align*}
\ &(\| \theta _t(a)\| _{\psi _0}^\sharp )^2 \\
&=\frac{1}{2}\psi _0 \circ \theta _t(a^*a+aa^*) \\
&=\frac{1}{2}(\psi _0 \circ \theta _{t_j}(a^*a+aa^*)) -\frac{1}{2}((\psi _0\circ \theta _{t_j}-\psi _0\circ \theta _t)(a^*a+aa^*)) \\
&\leq (\| a\| _{\psi _0\circ \theta _{t_j}}^\sharp )^2 +\| a\| ^2 \| \psi _0\circ \theta _{t_j}-\psi _0\circ \theta _t\|
\end{align*}
for any $a\in \tilde{M}$.
Hence for $t\in [t_j, t_{j+1}]\cap [-r,r]$, we have
\begin{align*}
\ &\| \theta _t(v_n) -v_n \| _{\psi _0}^\sharp \\
&\leq \| \theta _t(v_n-u_n) \| _{\psi _0}^\sharp +\| \theta _t(u_n)-u_n\| _{\psi _0}^\sharp +\| u_n-v_n \| _{\psi _0}^\sharp \\
&\leq (4\| \psi _0 \circ \theta _{t_j}-\psi _0 \circ \theta _t\| +(\| v_n -u_n\| _{\psi _0\circ \theta _{t_j}}^\sharp  )^2)^{1/2} \\
&+\| \theta _t(u_n)-u_n \| _{\psi _0}^\sharp +\| u_n -v_n \| _{\psi _0}^\sharp \\
&< (4\epsilon +(\| v_n -u_n\| _{\psi _0\circ \theta _{t_j}}^\sharp )^2 )^{1/2} \\
&+\| \theta _t(u_n)-u_n \| _{\psi _0}^\sharp + \| u_n -v_n \| _{\psi _0}^\sharp .
\end{align*}
Hence by Lemmas \ref{12} and \ref{13} (3), we have
\begin{align*}
\ &\lim _{n\to \omega} \| \theta _t(v_n) -v_n \| _{\psi _0}^\sharp \\
&\leq (4\epsilon +(\| V-U\| _{(\psi _0 \circ \theta _{t_j})\otimes \tau _{L^\infty}}^\sharp )^2)^{1/2} \\
&+\| \theta _t(U)-U\| _{\psi _0\otimes \tau _{L^\infty}}^\sharp +\| U-V\|_{\psi _0\otimes \tau _{L^\infty}}^\sharp \\
&<(4\epsilon +\epsilon ^2)^{1/2} +2\epsilon +\epsilon \\
&<6\sqrt{\epsilon} .
\end{align*}
\end{proof}
\begin{lemm}
\label{Q}
 We have
\[\lim _{n\to \omega } \| v_n ^*\psi _i \circ \phi _{\tilde{\rho}}-\psi _i \circ \phi _{\tilde{\sigma }}v_n^* \| \leq 7\epsilon \]
for any $i=1, \cdots , n$.
\end{lemm}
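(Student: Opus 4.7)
The plan is to bound the target functional by decomposing through a representing sequence $\{u_n\}$ of $\Theta(U)$ and handling the resulting three summands separately. Setting $\lambda_\rho:=\psi_i\circ\phi_{\tilde{\rho}}$ and $\lambda_\sigma:=\psi_i\circ\phi_{\tilde{\sigma}}$, I write
\[ v_n^*\lambda_\rho - \lambda_\sigma v_n^* = (v_n^*-u_n^*)\lambda_\rho + (u_n^*\lambda_\rho - \lambda_\sigma u_n^*) + \lambda_\sigma(u_n^*-v_n^*), \]
apply the triangle inequality, and take $\lim_{n\to\omega}$. A key identification, valid for any bounded sequence $(\phi_n)\subset\tilde{M}_*$, is that $\lim_{n\to\omega}\|\phi_n\|_{\tilde{M}_*}$ coincides with the $(\tilde{M}^\omega)_*$-norm of the induced ultraproduct functional (obtained by choosing near-optimal test sequences $x_n$ representing $\mathbf{x}\in\tilde{M}^\omega$). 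This lets one transport the estimates to $\tilde{M}^\omega$ and then use Lemma \ref{10}.

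For the first and third summands I would apply Lemma \ref{inequality} with $a=v_n^*-u_n^*$ and $a=u_n^*-v_n^*$ respectively, using the polar decomposition $\psi_i\circ\phi_{\tilde{\sigma}}=|\psi_i\circ\phi_{\tilde{\sigma}}|v_i$ for the third:
\[ \|(v_n^*-u_n^*)\lambda_\rho\|\le|\lambda_\rho|\bigl((v_n-u_n)(v_n-u_n)^*\bigr)^{1/2}\|\lambda_\rho\|^{1/2}, \]
and symmetrically $\|\lambda_\sigma(u_n^*-v_n^*)\|\le|\lambda_\sigma|\bigl(v_i(u_n-v_n)^*(u_n-v_n)v_i^*\bigr)^{1/2}\|\lambda_\sigma\|^{1/2}$. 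In the $\omega$-limit, Lemma \ref{10} identifies $|\lambda_\rho|^\omega\circ\Theta$ with $|\lambda_\rho|\otimes\tau_{L^\infty}$; the relation $v_i\Theta(w)v_i^*=\Theta((v_i\otimes 1)w(v_i\otimes 1)^*)$ holds because the Rohlin-unitary image is central over $\tilde{M}$. The inner traces become $(|\lambda_\rho|\otimes\tau_{L^\infty})((V-U)(V-U)^*)$ and $(|\lambda_\sigma|\otimes\tau_{L^\infty})(v_i(V-U)^*(V-U)v_i^*)$, each bounded by $2(\|V-U\|^\sharp_{|\lambda_\rho|\otimes\tau_{L^\infty}})^2$ and $2(\|(V-U)v_i^*\|^\sharp_{|\lambda_\sigma|\otimes\tau_{L^\infty}})^2$, both below $2\epsilon^2$ by Lemma \ref{13}(1)--(2). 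Hence each boundary term contributes at most $\sqrt{2}\epsilon$.

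For the middle summand, the isometry $y\mapsto u_n y$ yields $\|u_n^*\lambda_\rho-\lambda_\sigma u_n^*\|_{\tilde{M}_*}=\|\lambda_\rho\circ\mathrm{Ad}u_n-\lambda_\sigma\|_{\tilde{M}_*}$, whose $\omega$-limit equals $\|\lambda_\rho^\omega\circ\mathrm{Ad}\Theta(U)-\lambda_\sigma^\omega\|_{(\tilde{M}^\omega)_*}$. Lemma \ref{11} bounds the restriction of this functional to $\mathrm{Im}\Theta$ by $3\epsilon$. The main obstacle is promoting this estimate from $\mathrm{Im}\Theta$ to all of $\tilde{M}^\omega$; my plan is to exploit that $\Theta(U)$ commutes with each $\Theta(e_k)$ (since $U$ and $1\otimes e_k$ commute in $\tilde{M}\otimes L^\infty([-R,R])$), so conjugation by $\Theta(U)$ preserves each matrix block $\Theta(e_j)\tilde{M}^\omega\Theta(e_k)$, and the asymptotic centrality of $\{\Theta(e_k)\}$ over $\tilde{M}$ reduces the evaluation on any $\mathbf{y}\in\tilde{M}^\omega$ to pieces lying in $\mathrm{Im}\Theta$, while the resulting commutator remainders of the form $[\Theta(e_k),\lambda_\sigma]$ and $[\Theta(e_k),\theta_{s_j}(u)^*]$ vanish in the $\omega$-limit by another application of Lemma \ref{inequality}. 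Summing the three contributions gives $2\sqrt{2}\epsilon+3\epsilon<7\epsilon$, with the slack absorbing the vanishing commutator errors.
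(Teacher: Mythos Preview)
Your three-term decomposition and your treatment of the two outer terms via Lemma~\ref{inequality} and Lemma~\ref{13} are exactly what the paper does; the only substantive difference is in the middle term.

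For the middle term you pass to the full $(\tilde{M}^\omega)_*$-norm of $\lambda_\rho^\omega\circ\mathrm{Ad}\Theta(U)-\lambda_\sigma^\omega$ and then declare the ``main obstacle'' to be promoting Lemma~\ref{11} from $\mathrm{Im}\Theta$ to all of $\tilde{M}^\omega$, which you attempt via a block decomposition along the $\Theta(e_k)$. This detour is both unnecessary and unconvincing. It is unnecessary because the functionals $u_n^*\lambda_\rho-\lambda_\sigma u_n^*$ live on $\tilde{M}$, so in the ultraproduct picture one only has to test against the constant-sequence copy of $\tilde{M}$, and $\tilde{M}=\Theta(\tilde{M}\otimes 1)\subset\mathrm{Im}\Theta$; hence the restriction already present in Lemma~\ref{11} suffices, which is precisely the paper's route. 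It is unconvincing because your ``key identification'' relies on near-optimal test sequences $(x_n)$ representing elements of the Ocneanu ultraproduct $\tilde{M}^\omega$, which an arbitrary bounded sequence need not do; and your block argument does not actually land $\Theta(e_j)\mathbf{y}\Theta(e_k)$ in $\mathrm{Im}\Theta$, since asymptotic centrality over $\tilde{M}$ says nothing about general $\mathbf{y}\in\tilde{M}^\omega$.

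Once you use $\tilde{M}\subset\mathrm{Im}\Theta$ as the paper does, the middle term is bounded directly by $3\epsilon$ via Lemma~\ref{11}, and together with your $\sqrt{2}\epsilon$ bounds on the outer terms you recover (a version of) the paper's estimate without any extension argument.
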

\begin{proof}
Notice that we have
\begin{align*}
\ & \| u_n^* (\psi _i \circ \phi _{\tilde{\rho}})-(\psi _i \circ\phi _{\tilde{\sigma }})u_n ^*\| \\
 & \| \Theta (U)^* (\psi _i \circ \phi_{\tilde{\rho}})^\omega |_M -(\psi _i \circ \phi _{\tilde{\sigma }})^\omega \Theta (U)^*|_M \| \\
 &\leq \| (\psi _i \circ \phi _{\tilde{\rho}} )^\omega \circ \mathrm{Ad}\Theta (U)|_{\mathrm{Im}\Theta}-(\psi _i \circ \phi _{\tilde{\sigma }})^\omega |_{\mathrm{Im}\Theta}\| .
 \end{align*}
Hence by Lemmas \ref{11} and \ref{13} (1) (2), we have
\begin{align*}
\ &\lim _{n\to \omega } \| v_n^*\psi _i \circ \phi _{\tilde{\rho}}-\psi _i \circ \phi _{\tilde{\sigma }}v_n^* \| \\
&\leq \lim _{n\to \omega }(\| (v_n^*-u_n^*)\psi _i \circ \phi _{\tilde{\rho}} \| \\
&+\| u_n^*\psi _i \circ \phi _{\tilde{\rho}}-\psi _i \circ \phi _{\tilde{\sigma}}u_n^*\| +\| \psi _i \circ \phi _{\tilde{\sigma }}(u_n^*-v_n^*)\| ) \\
&\leq \lim _{n\to \omega }(\| (v_n-u_n)^*\| _{|\psi _i \circ \phi _{\tilde{\rho}}|} \\
&+\| (\psi _i \circ \phi _{\tilde{\rho}})^\omega \circ \mathrm{Ad}\Theta (U) |_{\mathrm{Im}\Theta} -(\psi _i \circ \phi _{\tilde{\sigma }})^\omega |_{\mathrm{Im}\Theta}\| \\
&+\| (v_n-u_n)v_i^*\| _{|\psi _i \circ \phi _{\tilde{\sigma }}|} )\\
&=\| V-U\| _{|\psi _i \circ \phi _{\tilde{\rho}}|\otimes \tau _{L^\infty}} \\
&+\| (\psi _i \circ \phi _{\tilde{\rho}})^\omega \circ \mathrm{Ad}\Theta (U) |_{\mathrm{Im}\Theta} -(\psi _i \circ \phi _{\tilde{\sigma }})^\omega |_{\mathrm{Im}\Theta}\| +\| (V-U)v_i^*\|_{|\psi _i \circ \phi _{\tilde{\sigma}}|\otimes \tau _{L^\infty}} \\
&\leq \epsilon +3\epsilon +\epsilon \\
&=5\epsilon .
\end{align*}
Note that in order to show the second inequality, we used Lemma \ref{inequality}.
\end{proof}
By Lemmas \ref{P} and \ref{Q}, we have the following proposition.
\begin{prop}
\label{R}
There exists a sequence $\{ v_n\}_{n=1}^\infty $ of unitaries of $\tilde{M}$ with
\[ \lim _{n\to \infty }\| \theta _t(v_n)-v_n\| _{\psi _0}^\sharp =0,\]
\[ \lim _{n\to \infty}\| v_n^*\psi _i \circ \phi _{\tilde{\rho}}-\psi _i \circ \phi _{\tilde{\sigma }}v_n^* \| =0\]
for any $i=1, 2, \cdots $.
\end{prop}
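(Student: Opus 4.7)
The proposition will follow from a standard diagonal selection combining Lemmas \ref{P} and \ref{Q} across sequences of parameters tending to the desired limits. For each positive integer $k$, run the construction of the previous two sections with $r = k$, $\epsilon = 1/k^2$, and with the first $k$ test functionals $\psi_1, \ldots, \psi_k$ from the fixed dense sequence. This produces a sequence $\{v_n^{(k)}\}_{n=1}^\infty$ of unitaries of $\tilde{M}$ for which Lemma \ref{P} gives $\lim_{n\to\omega}\|\theta_t(v_n^{(k)}) - v_n^{(k)}\|_{\psi_0}^\sharp < 6/k$ for each $t \in [-k,k]$, and Lemma \ref{Q} gives $\lim_{n\to\omega}\|(v_n^{(k)})^*\psi_i\circ\phi_{\tilde\rho} - \psi_i\circ\phi_{\tilde\sigma}(v_n^{(k)})^*\| \leq 7/k^2$ for each $i \leq k$.

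The next step is to pick, for each $k$, a single index $n_k$ so that both families of inequalities hold for the one unitary $v_k := v_{n_k}^{(k)}$ with slightly enlarged constants. Lemma \ref{Q} gives only finitely many inequalities (indexed by $i \leq k$), so these cause no difficulty. For Lemma \ref{P}, fix a finite $(1/k)$-net $T_k \subset [-k,k]$; since $T_k$ is finite, the intersection of the sets $\{n : \|\theta_t(v_n^{(k)}) - v_n^{(k)}\|_{\psi_0}^\sharp < 7/k\}$ for $t \in T_k$ and the corresponding $k$ sets coming from Lemma \ref{Q} still belongs to $\omega$, hence is nonempty. Select any $n_k$ in this intersection. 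The norm continuity of the orbit $s \mapsto \psi_0 \circ \theta_s$ (a consequence of strong continuity of $\theta$, together with $\|v_k\| = 1$) then upgrades the estimate at the net points to a uniform bound over all of $[-k,k]$ of the form $7/k + o(1)$ as $k \to \infty$.

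With this diagonal sequence in hand, for any fixed $t \in \mathbf{R}$ one has $t \in [-k,k]$ for all sufficiently large $k$, so $\|\theta_t(v_k) - v_k\|_{\psi_0}^\sharp \to 0$; similarly, for any fixed $i$ one has $i \leq k$ eventually, so $\|v_k^*\psi_i\circ\phi_{\tilde\rho} - \psi_i\circ\phi_{\tilde\sigma}v_k^*\| \to 0$. The only step requiring any thought beyond bookkeeping is the passage from the pointwise-in-$t$ ultrafilter bound of Lemma \ref{P} to a statement valid for arbitrary $t$; this is handled by the finite-net approximation together with the norm continuity of $\psi_0 \circ \theta_s$, so I do not anticipate any genuine obstacle.
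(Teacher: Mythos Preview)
Your diagonal-selection strategy is exactly what the paper intends; it simply writes ``By Lemmas \ref{P} and \ref{Q}'' and leaves the extraction to the reader. The bookkeeping for Lemma \ref{Q} and the choice of parameters $r=k$, $\epsilon=1/k^2$ is fine.

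The one genuine gap is the extension from the finite net $T_k$ to all of $[-k,k]$. Norm continuity of $s\mapsto\psi_0\circ\theta_s$ does \emph{not} give uniform continuity of $t\mapsto\|\theta_t(v)-v\|_{\psi_0}^\sharp$ over unitaries $v$ with $\|v\|=1$: already on $L^\infty(\mathbf R)$ with translation and $v=e^{i\lambda x}$, the quantity $\|\theta_t(v)-\theta_{t_0}(v)\|_{\psi_0}^\sharp$ is of order $|\lambda(t-t_0)|$, so no $\delta$ works uniformly. Thus your stated reason does not yield the upgrade you claim.

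The fix is to return to the inequality inside the proof of Lemma \ref{P},
\[
\|\theta_t(v_n)-v_n\|_{\psi_0}^\sharp
< \bigl(4\epsilon+(\|v_n-u_n\|_{\psi_0\circ\theta_{t_j}}^\sharp)^2\bigr)^{1/2}
+\|\theta_t(u_n)-u_n\|_{\psi_0}^\sharp
+\|u_n-v_n\|_{\psi_0}^\sharp,
\]
valid for every $n$ and every $t\in[t_j,t_{j+1}]\cap[-r,r]$. The first and last terms involve only the finitely many indices $j=0,\dots,J-1$, so a single $n$ in a finite $\omega$-intersection makes them all small. For the middle term, use that the representing sequence $(u_n)$ of $\Theta(U)$ may be taken in $\mathcal C_{\theta,\omega}$, i.e.\ $\omega$-equicontinuous; then a finite net in $[-r,r]$ together with Lemma \ref{12} gives $\|\theta_t(u_n)-u_n\|_{\psi_0}^\sharp<2\epsilon+\epsilon'$ for \emph{all} $t\in[-r,r]$ and all $n$ in an $\omega$-set. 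This is the equicontinuity ingredient that replaces your appeal to continuity of $\psi_0\circ\theta_s$; with it, the diagonal extraction goes through as you outlined.
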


\section{Approximation on $\tilde{M}\rtimes _\theta \mathbf{R}$.}
\label{lifting}
Set
\[ n_{\tau}:=\{ x\in \tilde{M}\mid \tau (x^*x)<\infty \}.\]
\begin{lemm}
\label{33}
Let $L^2(\tilde{M})$ be the standard Hilbert space of $\tilde{M}$ and $\Lambda :n_\tau \to L^2(\tilde{M})$ be the canonical injection. 
For each $x\in n_\tau$, set $V _{\tilde{\rho}}(\Lambda (x)):=\sqrt{d(\rho )}^{-1}\Lambda (\tilde{\rho}(x))$.
Then $V_{\tilde{\rho}}$ defines an isometry of $L^2(\tilde{M})$ satisfying
\[ V_{\tilde{\rho}}^* xV_{\tilde{\rho}}=\phi _{\tilde{\rho}} (x)\]
for any $x\in \tilde{M}$.
\end{lemm}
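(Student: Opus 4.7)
The plan is to establish Lemma \ref{33} in two steps: first verifying that $V_{\tilde{\rho}}$ extends to a well-defined isometry on all of $L^2(\tilde{M})$, and then proving the intertwining relation $V_{\tilde{\rho}}^* x V_{\tilde{\rho}} = \phi_{\tilde{\rho}}(x)$ by a direct computation of matrix coefficients on the dense subspace $\Lambda(n_\tau)$. No serious obstacle is expected; the argument is a standard Jones-type construction, and all the ingredients (the fact that $\tau \circ \tilde{\rho} = d(\rho)\tau$ from Proposition 2.5 (4) of Izumi, and the formula $\phi_{\tilde{\rho}} = \tilde{\rho}^{-1} \circ E_{\tilde{\rho}}$ from Lemma \ref{expectation}) have already been assembled.

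For the isometry claim, I would first observe that if $x \in n_\tau$, then $\tilde{\rho}(x) \in n_\tau$ as well, since
\[\tau(\tilde{\rho}(x)^*\tilde{\rho}(x)) = \tau(\tilde{\rho}(x^*x)) = d(\rho)\tau(x^*x) < \infty,\]
so the formula $V_{\tilde{\rho}}(\Lambda(x)) = d(\rho)^{-1/2}\Lambda(\tilde{\rho}(x))$ makes sense. The same computation gives
\[\|V_{\tilde{\rho}}(\Lambda(x))\|^2 = d(\rho)^{-1}\tau(\tilde{\rho}(x^*x)) = \tau(x^*x) = \|\Lambda(x)\|^2,\]
so $V_{\tilde{\rho}}$ is an isometry on the dense subspace $\Lambda(n_\tau) \subset L^2(\tilde{M})$ and extends uniquely by continuity.

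For the intertwining relation, I would compute the matrix coefficients of $V_{\tilde{\rho}}^* x V_{\tilde{\rho}}$ against vectors $\Lambda(y), \Lambda(z)$ with $y, z \in n_\tau$. Unwinding definitions,
\[\langle V_{\tilde{\rho}}^* x V_{\tilde{\rho}} \Lambda(y), \Lambda(z)\rangle = d(\rho)^{-1} \tau(\tilde{\rho}(z)^* x \tilde{\rho}(y)).\]
Since $\tilde{\rho}(z)^*$ and $\tilde{\rho}(y)$ lie in the subalgebra $\tilde{\rho}(\tilde{M})$, the bimodule property of the conditional expectation $E_{\tilde{\rho}}$ together with the $\tau$-invariance of $E_{\tilde{\rho}}$ yields
\[\tau(\tilde{\rho}(z)^* x \tilde{\rho}(y)) = \tau(\tilde{\rho}(z)^* E_{\tilde{\rho}}(x) \tilde{\rho}(y)) = \tau(\tilde{\rho}(z^* \phi_{\tilde{\rho}}(x) y)),\]
where the last equality uses Lemma \ref{expectation}. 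Applying $\tau \circ \tilde{\rho} = d(\rho)\tau$ then gives
\[d(\rho)^{-1}\tau(\tilde{\rho}(z^*\phi_{\tilde{\rho}}(x)y)) = \tau(z^* \phi_{\tilde{\rho}}(x) y) = \langle \phi_{\tilde{\rho}}(x)\Lambda(y), \Lambda(z)\rangle,\]
so the two operators agree on a dense set and hence everywhere.

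The only potentially delicate point is to ensure that all the intermediate products belong to appropriate ideals so that $\tau$ and $E_{\tilde{\rho}}$ may legitimately be applied; this is handled by the fact that $\tilde{\rho}(y) \in n_\tau$ when $y \in n_\tau$ and that $n_\tau$ is a two-sided ideal under the bounded action of $\tilde{M}$, so $x\tilde{\rho}(y)$ and $\tilde{\rho}(z)^* x \tilde{\rho}(y)$ lie in the trace class. Apart from this bookkeeping, the lemma follows entirely from the scaling identities already recorded.
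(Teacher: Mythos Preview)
Your proof is correct and follows essentially the same approach as the paper. The only difference is organizational: the paper first computes the adjoint explicitly, showing $V_{\tilde{\rho}}^*\Lambda(x)=\sqrt{d(\rho)}\,\Lambda(\phi_{\tilde{\rho}}(x))$ via Lemma~\ref{trace}, and then applies it to $x\,V_{\tilde{\rho}}\Lambda(y)$, whereas you pass directly to matrix coefficients; one minor slip is that $n_\tau$ is only a \emph{left} ideal, not two-sided, but your trace-class claims still hold since $x\tilde{\rho}(y)\in n_\tau$ by the left-ideal property and $\tilde{\rho}(z)^*x\tilde{\rho}(y)\in n_\tau^*n_\tau\subset m_\tau$.
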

\begin{proof}
Take $x\in n_\tau$.
Then by Lemma 2.5 (4) of Izumi \cite{I}, we have
\begin{align*}
\| V_{\tilde{\rho}} \Lambda (x)\| ^2 &=d(\rho )^{-1} \tau (\tilde{\rho }(x^*x) ) \\
                                  &=\tau (x^*x) =\| \Lambda (x)\| ^2.
\end{align*}
Hence $V_{\tilde{\rho}}$ defines an isometry of $L^2(\tilde{M})$. 
Next, we show the latter statement.
We have $V_{\tilde{\rho}}^* (\Lambda (x))=\sqrt{d(\rho )}\Lambda (\phi _{\tilde{\rho}}(x))$ because
\begin{align*}
\langle V_{\tilde{\rho}}^* \Lambda (x), \Lambda (y)\rangle &=\langle \Lambda (x), \sqrt{d(\rho )}^{-1} \Lambda (\tilde{\rho}(y))\rangle \\
                                                           &=\sqrt{d(\rho )}^{-1} \tau (\tilde{\rho}(y)^*x) \\
                                                           &=\sqrt{d(\rho )}\tau (y^*\phi _{\tilde{\rho}}(x)) \\
                                                           &=\langle \sqrt{d(\rho )} \Lambda (\phi _{\tilde{\rho}}(x)), \Lambda (y)\rangle
\end{align*}
for any $x,y \in n_\tau$.
In order to show the third equality of the above, we used Lemma \ref{trace}.
Hence for any $x\in \tilde{M}$ and $y\in n_\tau$, we have
\begin{align*}
V_{\tilde{\rho}}^*xV_{\tilde{\rho}}\Lambda (y)&= \sqrt{d(\rho )}^{-1}V_{\tilde{\rho}}^*\Lambda (x\tilde{\rho}(y)) \\
                                              &= \Lambda (\phi _{\tilde{\rho}}(x\tilde{\rho}(y))) \\
                                              &=\phi _{\tilde{\rho}}(x) \Lambda (y).
\end{align*}
\end{proof}

Let $\rho $ be an endomorphism of a von Neumann algebra $M$.
Then since its canonical extension $\tilde{\rho}$ satisfies $\tau \circ \tilde{\rho}=d(\rho )\tau$, the endomorphism $\tilde{\rho}$ extends to $\tilde{M}\rtimes _\theta \mathbf{R}$ by $\lambda _t^\theta \mapsto \lambda _t^\theta $ for any $t\in \mathbf{R}$.
We denote this extension by $\tilde{\tilde{\rho}}$.

\begin{lemm}
\label{18}
Let $\alpha $ and $\sigma $ be finite index endomorphisms of a separable infinite factor $M$ and $\varphi $ be a dominant weight of $M$.
Assume that there exists a sequence  $\{ u_n\}$ of unitaries of $\tilde{M} \rtimes _\theta \mathbf{R}$ with $\mathrm{Ad}u_n \circ \tilde{\tilde{\rho}} \to \tilde{\tilde{\sigma}}$ as $n \to \infty$.
Then there exists a sequence  $\{ v_n \}$ of unitaries of $M$ with $\mathrm{Ad}v_n \circ \rho \to \sigma $. 
\end{lemm}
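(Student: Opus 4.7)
The plan is to reduce the problem to a stabilization question via Takesaki duality. First I would invoke Takesaki's theorem to obtain a canonical isomorphism
\[
\Phi:\tilde M\rtimes_\theta\mathbf R\xrightarrow{\ \cong\ }M\,\bar\otimes\,B(L^2(\mathbf R))
\]
that restricts to the identity embedding on $M$, and under which the unitary generators $\lambda^{\sigma^\varphi}$ and $\lambda^\theta$ together span $1\otimes B(L^2(\mathbf R))$. Since $(\varphi,\rho)$ is an invariant pair we have $[D\varphi\circ\phi_\rho:D\varphi]_t=d(\rho)^{-it}$, so $\tilde\rho(\lambda_t^{\sigma^\varphi})=\lambda_t^{\sigma^\varphi}$; combined with the identity $\tilde{\tilde\rho}(\lambda_t^\theta)=\lambda_t^\theta$ coming from the definition of $\tilde{\tilde\rho}$, a direct computation then gives $\Phi\circ\tilde{\tilde\rho}\circ\Phi^{-1}=\rho\otimes\mathrm{id}_{B(L^2(\mathbf R))}$, and similarly for $\sigma$. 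The hypothesis therefore translates into the existence of unitaries $w_n:=\Phi(u_n)\in M\,\bar\otimes\,B(L^2(\mathbf R))$ such that $\mathrm{Ad}\,w_n\circ(\rho\otimes\mathrm{id})\to\sigma\otimes\mathrm{id}$ in the Masuda--Tomatsu topology.

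Next I would descend from $M\,\bar\otimes\,B(L^2(\mathbf R))$ back to $M$. Fix a rank-one projection $p\in B(L^2(\mathbf R))$; then $(1\otimes p)(M\,\bar\otimes\,B(L^2(\mathbf R)))(1\otimes p)\cong M\otimes p\cong M$, and the restriction of $\rho\otimes\mathrm{id}$ to this corner is exactly $\rho$. Because the image $(\rho\otimes\mathrm{id})(M\,\bar\otimes\,B(L^2(\mathbf R)))=\rho(M)\otimes B(L^2(\mathbf R))$ already contains $1\otimes B(L^2(\mathbf R))$, an intertwiner-type argument---using the infiniteness of $M$ (so that $1\otimes p\sim 1$ inside $M\,\bar\otimes\,B(L^2(\mathbf R))$) to extend the partial isometry $w_n(1\otimes p)$ to a unitary supported on $M\otimes p$---lets me replace $w_n$ by a unitary of the form $v_n\otimes 1$ with $v_n\in M$, while introducing only an error controlled by the given convergence.

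To confirm that the resulting sequence satisfies $\mathrm{Ad}\,v_n\circ\rho\to\sigma$ in $M$, I would appeal to Lemma~\ref{criterion}: it suffices to verify (i) norm convergence $\psi_0\circ\phi_\rho\circ\mathrm{Ad}\,v_n^*\to\psi_0\circ\phi_\sigma$ for a single faithful normal state $\psi_0\in M_*$, together with (ii) pointwise strong-$*$ convergence of $\mathrm{Ad}\,v_n\circ\rho$ to $\sigma$. Both statements are inherited from the corresponding convergences for $w_n$ by testing against product states of the form $\psi_0\otimes\omega_p$ and using the factorizations $\phi_{\rho\otimes\mathrm{id}}=\phi_\rho\otimes\mathrm{id}$ and $\phi_{\sigma\otimes\mathrm{id}}=\phi_\sigma\otimes\mathrm{id}$, with the quantitative estimates of Lemma~\ref{inequality} handling the error incurred by replacing $w_n$ with $v_n\otimes 1$.

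The hardest step will be the descent of the second paragraph. Because the Masuda--Tomatsu topology is strictly finer than pointwise strong-$*$ convergence, one cannot simply ``cut down'' $w_n$ to the corner $M\otimes p$ and expect the approximation to survive; the approximation must first be decoupled via Lemma~\ref{criterion} into a single-state norm condition plus a point-strong-$*$ condition, and then the partial-isometry-to-unitary extension must be controlled in both of these separately. This is the place where the infiniteness of $M$ enters in an essential way, and it is precisely the cancellation-of-stabilization phenomenon that makes the passage from $\tilde M\rtimes_\theta\mathbf R$ to $M$ possible.
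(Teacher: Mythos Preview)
Your first paragraph matches the paper exactly: Takesaki duality together with the invariant-pair assumption identifies $\tilde{\tilde\rho}$ and $\tilde{\tilde\sigma}$ with $\rho\otimes\mathrm{id}_{B(L^2\mathbf R)}$ and $\sigma\otimes\mathrm{id}_{B(L^2\mathbf R)}$ under a single isomorphism $\Phi$, so the hypothesis becomes $\mathrm{Ad}\,w_n\circ(\rho\otimes\mathrm{id})\to\sigma\otimes\mathrm{id}$ with $w_n=\Phi(u_n)$.

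The divergence is in the descent step, and the paper's route is considerably simpler than yours. Rather than cutting to a corner and then trying to correct each $w_n$ individually, the paper invokes Lemma~3.11 of Masuda--Tomatsu \cite{MT1}: because $M$ is properly infinite there is a \emph{fixed} isomorphism $\pi:M\otimes B(L^2\mathbf R)\to M$ and unitaries $u_\rho,u_\sigma\in M$ (independent of $n$) with
\[
\pi\circ(\rho\otimes\mathrm{id})\circ\pi^{-1}=\mathrm{Ad}\,u_\rho\circ\rho,
\qquad
\pi\circ(\sigma\otimes\mathrm{id})\circ\pi^{-1}=\mathrm{Ad}\,u_\sigma\circ\sigma.
\]
One then simply sets $v_n:=u_\sigma^*\,\pi(w_n)\,u_\rho$ and computes $\mathrm{Ad}\,v_n\circ\rho=\mathrm{Ad}\,u_\sigma^*\circ\pi\circ\bigl(\mathrm{Ad}\,w_n\circ(\rho\otimes\mathrm{id})\bigr)\circ\pi^{-1}\to\sigma$. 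Since conjugation by a fixed isomorphism is continuous for the Masuda--Tomatsu topology, no error control, no Lemma~\ref{criterion}, and no Lemma~\ref{inequality} are needed at this stage.

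Your corner-cutting approach is in the right spirit but, as written, has a wrinkle: the partial isometry $w_n(1\otimes p)$ has initial support $1\otimes p$ but final support $w_n(1\otimes p)w_n^*$, which does \emph{not} lie under $1\otimes p$, so ``extending it to a unitary supported on $M\otimes p$'' requires composing with a further ($n$-dependent) partial isometry sending $w_n(1\otimes p)w_n^*$ back under $1\otimes p$. Making a coherent, $n$-independent such choice is exactly what the isomorphism $\pi$ accomplishes, and once you do that your argument collapses to the paper's. In short, the step you flag as hardest is in fact trivial once one uses a global stable isomorphism rather than a corner.
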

\begin{proof}
Since $(\varphi , \rho )$ and $(\varphi  , \sigma )$ are invariant pairs, it is possible to identify $\tilde{\tilde{\rho}}$ with $\rho \otimes \mathrm{id}_{B(L^2\mathbf{R})}$ and $\tilde{\tilde{\sigma }}$ with $\sigma \otimes \mathrm{id}_{B(L^2\mathbf{R})}$ through Takesaki duality, respectively (It is possible to choose the same identification between $M\otimes B(L^2\mathbf{R}) $ and $\tilde{M}\rtimes _\theta \mathbf{R}$ for $\tilde{\tilde{\rho}}$ and $\tilde{\tilde{\sigma }}$. See the argument preceding to Lemma 3.10  of Masuda--Tomatsu \cite{MT1}).
Then by (the proof of) Lemma 3.11  of Masuda--Tomatsu \cite{MT1}, there exist an isomorphism $\pi $ from $M\otimes B(L^2\mathbf{R})$ to $M$ and unitaries $u_\rho$, $u_\sigma $ of $M$ satisfying 
\[ \pi \circ (\rho \otimes \mathrm{id}) \circ \pi ^{-1} =\mathrm{Ad}u_\rho \circ \rho , \]
\[ \pi \circ (\sigma \otimes \mathrm{id} ) \circ \pi ^{-1} =\mathrm{Ad}u_\sigma \circ \sigma \]
(Although in the statement of Lemma 3.11 of Masuda--TOmatsu \cite{MT1}, the isomorphism $\pi $ depends on the choice of $\rho$, $\pi $ turns out to be independent of $\rho $ by its proof).
Then we have
\begin{align*}
\ & \mathrm{Ad}(u_\sigma ^*\pi (u_n) u_\rho ) \circ \rho \\
&= \mathrm{Ad}(u_\sigma ^*\pi (u_n ) )\circ \pi \circ (\rho\otimes \mathrm{id}_{B(L^2\mathbf{R})}) \circ \pi ^{-1} \\
&=\mathrm{Ad}u_\sigma ^* \circ \pi \circ (\mathrm{Ad}u_n \circ (\rho \otimes \mathrm{id}_{B(L^2\mathbf{R})}) ) \circ \pi ^{-1} \\
&\to \mathrm{Ad}u_\sigma ^* \circ \pi \circ (\sigma \otimes \mathrm{id}_{B(L^2\mathbf{R})} )\circ \pi ^{-1} \\
&=\mathrm{Ad}u_\sigma ^* \circ (\mathrm{Ad}u_\sigma \circ \sigma ) \\
&=\sigma .
\end{align*}
\end{proof}

\begin{lemm}
\label{19}
Let $\rho $ be an endomorphism with finite index and with $(\varphi , \rho )$ an invariant pair.
Let $E_{\tilde{\tilde{\rho}}}$ be the minimal expectation from $\tilde{\tilde{M}}$ to $\tilde{\tilde{\rho}}(\tilde{\tilde{M}})$.
Then we have the following.

\textup{(1)} For each $x\in \tilde{M}$, we have $E_{\tilde{\tilde{\rho}}}(x)=E_{\tilde{\rho}}(x)$.

\textup{(2)} For any $s\in \mathbf{R}$, we have $E_{\tilde{\tilde{\rho}}}(\lambda _t^\theta )=\lambda _t^\theta$.
\end{lemm}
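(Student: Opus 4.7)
The plan is to construct the candidate minimal expectation explicitly and verify its characterizing properties. Define $\hat{E}:\tilde{\tilde{M}}\to \tilde{\tilde{\rho}}(\tilde{\tilde{M}})$ by $\hat{E}(x\lambda_t^\theta):= E_{\tilde{\rho}}(x)\lambda_t^\theta$ for $x\in \tilde{M}$, $t\in \mathbf{R}$, and extend $\sigma$-weakly. Conditions (1) and (2) will both follow once I establish $\hat{E}=E_{\tilde{\tilde{\rho}}}$.

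For well-definedness and the conditional expectation properties: first note that $\tilde{\rho}$ commutes with the trace-scaling action $\theta$, by uniqueness of the canonical extension, so $\tilde{\rho}(\tilde{M})$ is $\theta$-invariant and $\tilde{\tilde{\rho}}(\tilde{\tilde{M}})=\tilde{\rho}(\tilde{M})\rtimes_\theta \mathbf{R}$ as a subalgebra of $\tilde{\tilde{M}}$. By uniqueness of the minimal expectation applied to the inclusion $\tilde{\rho}(\tilde{M})\subset \tilde{M}$, we get $E_{\tilde{\rho}}\circ \theta_t=\theta_t\circ E_{\tilde{\rho}}$, which makes $\hat{E}$ well-defined. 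Bimodularity over $\tilde{\tilde{\rho}}(\tilde{\tilde{M}})$, positivity, faithfulness and idempotence are all inherited from the corresponding properties of $E_{\tilde{\rho}}$.

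For \emph{minimality}, I would invoke Takesaki duality exactly as in the proof of Lemma~\ref{18}. Under the invariant pair assumption, this duality yields an isomorphism $\tilde{\tilde{M}}\cong M\otimes B(L^2(\mathbf{R}))$ that intertwines $\tilde{\tilde{\rho}}$ with $\rho\otimes \mathrm{id}_{B(L^2(\mathbf{R}))}$. The minimal expectation from $M\otimes B(L^2(\mathbf{R}))$ onto $\rho(M)\otimes B(L^2(\mathbf{R}))$ is precisely $E_\rho\otimes \mathrm{id}$, because its index equals $d(\rho)^2=d(\tilde{\tilde{\rho}})^2$ and therefore achieves the minimal value. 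Transporting this back through the isomorphism identifies $E_\rho\otimes \mathrm{id}$ with $\hat{E}$, so $\hat{E}=E_{\tilde{\tilde{\rho}}}$. Item (1) is then the definition of $\hat{E}$ on $\tilde{M}$, and item (2) is immediate from $\tilde{\tilde{\rho}}(\lambda_t^\theta)=\lambda_t^\theta$, whence $\lambda_t^\theta\in \tilde{\tilde{\rho}}(\tilde{\tilde{M}})$ and so any expectation onto this subalgebra fixes it.

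The main obstacle is the minimality claim, i.e.\ showing that tensoring with the identity on $B(L^2(\mathbf{R}))$ preserves the property of being the minimal expectation. Either one appeals to this as a known feature of the minimal (extremal) expectation under tensoring with a type $\mathrm{I}$ factor, or one verifies it directly via the isometry $V_{\tilde{\rho}}$ from Lemma~\ref{33}: the canonical lift of $V_{\tilde{\rho}}$ to $L^2(\tilde{\tilde{M}})$ induces an expectation with index $d(\rho)^2$ that coincides with $\hat{E}$, forcing extremality.
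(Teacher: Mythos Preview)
Your argument is sound and in fact more detailed than what the paper offers: the paper's own ``proof'' is a single sentence citing the proof of Theorem~4.1 of Longo~\cite{L}, which shows that for a $G$-equivariant finite-index inclusion $N\subset M$ the crossed-product inclusion $N\rtimes G\subset M\rtimes G$ has the same minimal index, with minimal expectation given exactly by your $\hat{E}$. So you are essentially reconstructing Longo's argument.

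Your route to minimality via Takesaki duality is a legitimate variant, and indeed the duality identification $\tilde{\tilde{\rho}}\cong \rho\otimes\mathrm{id}_{B(L^2\mathbf{R})}$ you invoke from Lemma~\ref{18} (which comes from Masuda--Tomatsu~\cite{MT1} and does not rely on the present lemma) immediately tells you that the minimal index of $\tilde{\tilde{\rho}}(\tilde{\tilde{M}})\subset\tilde{\tilde{M}}$ is $d(\rho)^2$. The one step you leave implicit is why the transported expectation $E_\rho\otimes\mathrm{id}$ coincides with your $\hat{E}$; rather than tracking it through the duality map, it is cleaner to argue as you hint at the end: compute the index of $\hat{E}$ directly (e.g.\ via a Pimsner--Popa basis for $E_{\tilde\rho}$, which remains a basis for $\hat{E}$, or via the isometry $V_{\tilde\rho}$ as in Lemma~\ref{33}), observe it equals $d(\rho)^2$, and conclude $\hat{E}=E_{\tilde{\tilde{\rho}}}$ by uniqueness of the minimal expectation. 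With that clarification, your proof is complete and matches the content of the cited reference.
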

\begin{proof}
This is shown in the proof of  Theorem 4.1 of Longo \cite{L}.
\end{proof}
\begin{lemm}
\label{36}
For $\xi \in L^2(\mathbf{R}, \tilde{M})$, set 
\[ V_{\tilde{\tilde{\rho}}}(\xi )(s):=V_{\tilde{\rho}}(\xi (s)).\]
Then $V_{\tilde{\tilde{\rho}}}$ is an isometry of $L^2(\mathrm{R}, \tilde{M})$ satisfying
\[ V_{\tilde{\tilde{\rho}}}^*xV_{\tilde{\tilde{\rho}}}=\phi _{\tilde{\tilde{\rho}}}(x)\]
for any $x\in M$, where $\phi _{\tilde{\tilde{\rho}}}=\tilde{\tilde{\rho}}^{-1}\circ E_{\tilde{\tilde{\rho}}}$.
\end{lemm}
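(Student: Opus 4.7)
The plan is to run the argument of Lemma \ref{33} one level up --- for $\tilde{\tilde{\rho}}$ acting on $\tilde{\tilde{M}} := \tilde{M} \rtimes_\theta \mathbf{R}$ --- using Lemma \ref{19} to control how $E_{\tilde{\tilde{\rho}}}$ (equivalently $\phi_{\tilde{\tilde{\rho}}}$) behaves on each of the two natural generating sets of $\tilde{\tilde{M}}$.

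First I would identify $L^2(\tilde{\tilde{M}})$ with $L^2(\mathbf{R}, L^2(\tilde{M}))$ via the standard covariant presentation: elements of $\tilde{M}$ act by $(x\xi)(s) = \theta_{-s}(x)\xi(s)$ and $\lambda_t^\theta$ acts by translation in the $\mathbf{R}$-variable. Under this identification $V_{\tilde{\tilde{\rho}}}$ is the pointwise-in-$s$ application of $V_{\tilde{\rho}}$, so by Lemma \ref{33} and Fubini
\[ \| V_{\tilde{\tilde{\rho}}} \xi \|^2 = \int_{\mathbf{R}} \| V_{\tilde{\rho}} \xi(s) \|^2 \, ds = \int_{\mathbf{R}} \| \xi(s) \|^2 \, ds = \| \xi \|^2, \]
which gives the isometry claim.

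For the intertwining identity $V_{\tilde{\tilde{\rho}}}^* x V_{\tilde{\tilde{\rho}}} = \phi_{\tilde{\tilde{\rho}}}(x)$, by normality of both sides it suffices to check equality on the generators $\tilde{M}$ and $\{\lambda_t^\theta\}_{t \in \mathbf{R}}$. For $x \in \tilde{M}$, the fiberwise action of $x$ is $\theta_{-s}(x)$, so applying Lemma \ref{33} in each fiber gives
\[ V_{\tilde{\rho}}^* \theta_{-s}(x) V_{\tilde{\rho}} = \phi_{\tilde{\rho}}(\theta_{-s}(x)) = \theta_{-s}(\phi_{\tilde{\rho}}(x)), \]
where the second equality uses $\phi_{\tilde{\rho}} \circ \theta_s = \theta_s \circ \phi_{\tilde{\rho}}$ (a consequence of the fact that $\tilde{\rho}$ intertwines $\theta$, verifiable from Izumi's formula, together with the fact that the $\tau$-preserving expectation $E_{\tilde{\rho}}$ commutes with the trace-scaling flow). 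This is precisely the fiberwise action of $\phi_{\tilde{\rho}}(x) \in \tilde{M}$, which by Lemma \ref{19}(1) coincides with the action of $\phi_{\tilde{\tilde{\rho}}}(x)$. For $x = \lambda_t^\theta$, translation in $s$ commutes with any fiberwise operator, so $V_{\tilde{\tilde{\rho}}}^* \lambda_t^\theta V_{\tilde{\tilde{\rho}}} = \lambda_t^\theta$; by Lemma \ref{19}(2) together with $\tilde{\tilde{\rho}}(\lambda_t^\theta) = \lambda_t^\theta$ (from the definition of $\tilde{\tilde{\rho}}$), the right-hand side equals $\phi_{\tilde{\tilde{\rho}}}(\lambda_t^\theta)$, as required.

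The computation itself is essentially routine; the main obstacle is bookkeeping --- spelling out the standard identification $L^2(\tilde{\tilde{M}}) \cong L^2(\mathbf{R}, L^2(\tilde{M}))$ with the correct twist $\theta_{-s}$ on the $\tilde{M}$-action, and verifying the commutativity $\phi_{\tilde{\rho}} \circ \theta_s = \theta_s \circ \phi_{\tilde{\rho}}$ --- so that the fiberwise output $\theta_{-s}(\phi_{\tilde{\rho}}(x))$ can legitimately be re-assembled into the action of the element $\phi_{\tilde{\rho}}(x) \in \tilde{M} \subset \tilde{\tilde{M}}$.
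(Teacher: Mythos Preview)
Your proposal is correct and follows essentially the same route as the paper: the paper also verifies the isometry by a Fubini computation using Lemma~\ref{33} fiberwise, then checks the intertwining identity separately on $\pi_\theta(\tilde M)$ (via Lemma~\ref{33} and the commutation $\phi_{\tilde\rho}\circ\theta_s=\theta_s\circ\phi_{\tilde\rho}$, then Lemma~\ref{19}) and on $\lambda_t^\theta$ (via the fact that translation commutes with the fiberwise isometry). Your write-up is slightly more explicit about the bookkeeping (normality, the identification of the standard Hilbert space, and why $\phi_{\tilde{\tilde\rho}}(\lambda_t^\theta)=\lambda_t^\theta$), but the argument is the same.
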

\begin{proof}
The first statement is shown by the following computation.
\begin{align*}
 \| V_{\tilde{\tilde{\rho}}}(\xi )\| ^2&=\int _{\mathbf{R}}\| V_{\tilde{\rho}}(\xi (s))\| ^2 \ d\mu (s) \\
                                             &=\int _{\mathbf{R}}\| \xi (s)\| ^2\ d\mu (s) \\
                                             &=\| \xi \| ^2
\end{align*}
for $\xi \in L^2(\mathbf{R}, \tilde{M})$.
Next, we show the latter statement.
Choose $x\in M$ and $\xi \in L^2(\mathbf{R}, \tilde{M})$.
Then we have
\begin{align*}
V_{\tilde{\tilde{\rho}}}^* \circ \pi _\theta (x) \circ V_{\tilde{\tilde{\rho}}}(\xi ) &= V_{\tilde{\tilde{\rho}} }^* \pi _\theta (x)(s\mapsto V_{\tilde{\rho}}(\xi (s))) \\
                                                                                   &=V_{\tilde{\tilde{\rho}}} ^* (s\mapsto \theta _{-s}(x)\circ V_{\tilde{\rho}}(\xi (s))) \\
                                                                                   &=(s\mapsto V_{\tilde{\rho}}^* \circ \theta _{-s}(x) \circ V_{\tilde{\rho}}(\xi (s))) \\
                                                                                   &=(s\mapsto \phi _{\tilde{\rho}}(\theta _{-s}(x))(\xi (s))) \\
                                                                                   &=(s\mapsto \theta _{-s}( \phi _{\tilde{\rho}}(x))(\xi (s))) \\
                                                                                   &=\pi _\theta (\phi _{\tilde{\rho}}(x))(\xi ) \\
                                                                                   &=\phi _{\tilde{\tilde{\rho}}} (\pi _\theta (x))(\xi ).
\end{align*}
In order to show the fourth equality of the above, we used Lemma \ref{33}. The last equality of the above follows from Lemma \ref{19}.
For $t\in \mathbf{R}$ and $\xi \in L^2(\mathbf{R},\tilde{M})$, we have
\begin{align*}
V_{\tilde{\tilde{\rho}}}^*\lambda _t^\theta V_{\tilde{\tilde{\rho}}}\xi &= V_{\tilde{\tilde{\rho}}} ^*(s\mapsto V _{\tilde{\rho}}(\xi (s-t)) \\
                                                                              &= s\mapsto V_{\tilde{\rho}}^*V_{\tilde{\rho}}(\xi (s-t)) \\
                                                                              &= \lambda _t^\theta (\xi ).
\end{align*}

Thus we are done.
\end{proof}
\begin{lemm}
\label{20}
Let $N$ be a von Neumann algebra and $\{ V_n\}_{n=0}^\infty $ be a sequence of isometries on the standard Hilbert space $L^2(N)$ such that for each $n$, the map $\Phi _n:N\ni x\mapsto V_n^*xV_n$ is a left inverse of an endomorphism of $N$.
Then the following two conditions are equivalent.

\textup{(1)}  The sequence of operators $\{ V_n\}_{n=1}^\infty $ converges to  $V_0$ strongly. 

\textup{(2)}  We have $\|\psi \circ \Phi _n -\psi \circ \Phi _0\| \to 0$ for any $\psi \in N_*$.
\end{lemm}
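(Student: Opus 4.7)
My plan is to translate between predual norm convergence of the functionals $\psi \circ \Phi_n$ and strong convergence of the isometries $V_n$ using the geometry of the natural positive cone $P \subset L^2(N)$ together with the Araki--Powers--St\o rmer inequality. The easy direction (1) $\Rightarrow$ (2) uses the elementary estimate for the predual norm of a difference of vector states; the harder direction (2) $\Rightarrow$ (1) hinges on the canonical implementing isometry preserving the cone.

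For (1) $\Rightarrow$ (2), I would decompose an arbitrary $\psi \in N_*$ as a complex linear combination of at most four normal positive functionals and reduce to the case $\psi = \omega_\xi$ with $\xi \in P$. For such $\psi$, the identity $\psi \circ \Phi_n(x) = \langle V_n^* x V_n \xi,\xi\rangle = \langle x V_n \xi, V_n \xi\rangle = \omega_{V_n \xi}(x)$ holds for all $x \in N$, and combining the elementary inequality $\|\omega_\eta - \omega_{\eta'}\|_{N_*} \leq (\|\eta\|+\|\eta'\|)\|\eta-\eta'\|$ with $\|V_n\xi\| = \|\xi\|$ yields
\[
\|\psi \circ \Phi_n - \psi \circ \Phi_0\|_{N_*} \leq 2\|\xi\| \cdot \|V_n\xi - V_0\xi\|,
\]
which tends to zero by (1).

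For (2) $\Rightarrow$ (1), the crucial structural input is that the canonical isometry implementing a left inverse of an endomorphism preserves the natural positive cone $P$: indeed, since the endomorphism $\rho$ sends positive elements to positive elements, the formula $V\Lambda(x) = d(\rho)^{-1/2}\Lambda(\rho(x))$ provided by Lemma \ref{33} maps $P$ into $P$. For $\xi \in P$, both $V_n \xi$ and $V_0 \xi$ then lie in $P$, and the Araki--Powers--St\o rmer inequality gives
\[
\|V_n\xi - V_0\xi\|^2 \leq \|\omega_{V_n\xi} - \omega_{V_0\xi}\|_{N_*} = \|\omega_\xi \circ \Phi_n - \omega_\xi \circ \Phi_0\|_{N_*} \to 0.
\]
Since $P$ linearly spans $L^2(N)$ (every vector may be written as $\xi_1 - \xi_2 + i\xi_3 - i\xi_4$ with $\xi_j \in P$), strong convergence extends by linearity to all of $L^2(N)$. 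The main point requiring care is the implicit understanding that the $V_n$ are the canonical cone-preserving implementations: an alternative isometry $V'_n = w_n V_n$ with $w_n \in N'$ unitary would implement the same $\Phi_n$ but could destroy the strong-convergence conclusion, so the equivalence truly depends on the canonical choice that matches Lemma \ref{33}.
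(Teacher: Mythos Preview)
Your argument is correct and is precisely the standard route: the Araki--Powers--St\o rmer inequality on the natural cone is the mechanism behind Lemma~3.3 of Masuda--Tomatsu \cite{MT1}, which is all the paper invokes here. So your approach and the paper's (referenced) approach coincide.

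Your closing caveat is also well taken and worth emphasizing: as literally stated, the lemma is false---for $N=\mathbf{C}$ any sequence of unimodular scalars $V_n$ implements $\Phi_n=\mathrm{id}$, so (2) holds while (1) need not. The equivalence really is a statement about the \emph{canonical} cone-preserving implementations, and the paper uses it only in that sense (via the isometries of Lemmas~\ref{33} and~\ref{36}, which manifestly send positives to positives). One small point: you justify cone preservation by citing Lemma~\ref{33}, which is the tracial construction on $\tilde{M}$, whereas Lemma~\ref{20} is phrased for an arbitrary $N$. The fact you need---that the standard-form implementing isometry of an endomorphism preserves the natural cone---holds in general (Haagerup's standard form), and in any case holds in the only situations where the paper applies the lemma.
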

\begin{proof}
This is shown by the same argument as that of the proof of Lemma 3.3 of Masuda--Tomatsu \cite{MT1}.
\end{proof}

\begin{lemm}
\label{22.5}
Let $\{ u_n\} $ be a sequence of unitaries of $\tilde{M}$ satisfying the following conditions. 

\bigskip

\textup{(1)} We have $\mathrm{Ad}u_n \circ \tilde{\rho} \to \tilde{\sigma }$ as $n \to \infty$.

\textup{(2)} For any compact subset $F$ of $\mathbf{R}$, we have $\theta _t(u_n )-u_n \to 0$ uniformly for $t\in F$.

\bigskip

Then we have $\mathrm{Ad} u_n \circ \rho \to \sigma$.
\end{lemm}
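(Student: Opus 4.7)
The strategy is to invoke Lemma \ref{18}, which reduces the problem to producing a sequence of unitaries in $\tilde{\tilde{M}} := \tilde{M} \rtimes_\theta \mathbf{R}$ implementing an approximate conjugation between $\tilde{\tilde{\rho}}$ and $\tilde{\tilde{\sigma}}$. I will show that the given sequence $\{u_n\}$, viewed as unitaries of $\tilde{\tilde{M}}$ via the inclusion $\tilde{M} \subset \tilde{\tilde{M}}$, already works: $\mathrm{Ad}u_n \circ \tilde{\tilde{\rho}} \to \tilde{\tilde{\sigma}}$ in the Masuda--Tomatsu topology on $\mathrm{End}(\tilde{\tilde{M}})_0$. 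Lemma \ref{18} then yields the desired sequence of unitaries of $M$.

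To prove convergence on $\tilde{\tilde{M}}$, I apply the criterion of Lemma \ref{criterion}, which splits the task into (i) pointwise $*$-strong convergence $\mathrm{Ad}u_n \circ \tilde{\tilde{\rho}}(x) \to \tilde{\tilde{\sigma}}(x)$ for every $x \in \tilde{\tilde{M}}$, and (ii) norm convergence $\|\psi \circ \phi_{\mathrm{Ad}u_n \circ \tilde{\tilde{\rho}}} - \psi \circ \phi_{\tilde{\tilde{\sigma}}}\| \to 0$ for some faithful normal state $\psi$. For (i), on $\tilde{M}$ the conclusion follows from assumption (1) via Lemma \ref{criterion} applied on $\tilde{M}$, together with $\tilde{\tilde{\rho}}|_{\tilde{M}} = \tilde{\rho}$ (and similarly for $\sigma$). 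On the generators $\lambda_t^\theta$, the calculation
\[ \mathrm{Ad}u_n(\lambda_t^\theta) = u_n \theta_t(u_n^*) \lambda_t^\theta, \]
combined with assumption (2) gives convergence to $\lambda_t^\theta = \tilde{\tilde{\sigma}}(\lambda_t^\theta)$; pointwise convergence on $\tilde{\tilde{M}}$ then follows by density of the $*$-algebra generated by $\tilde{M}$ and $\{\lambda_t^\theta\}$.

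For (ii), Lemma \ref{36} identifies $\phi_{\tilde{\tilde{\rho}}}$ with compression by the isometry $V_{\tilde{\tilde{\rho}}}$ on $L^2(\tilde{\tilde{M}}) \cong L^2(\mathbf{R}, \tilde{M})$, and a direct verification gives $V_{\mathrm{Ad}u_n \circ \tilde{\tilde{\rho}}} = \pi_\theta(u_n) V_{\tilde{\tilde{\rho}}}$. By Lemma \ref{20}, the norm convergence of functionals in (ii) is equivalent to strong convergence $\pi_\theta(u_n) V_{\tilde{\tilde{\rho}}} \to V_{\tilde{\tilde{\sigma}}}$ on $L^2(\mathbf{R}, \tilde{M})$. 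Using the pointwise expressions
\[ (\pi_\theta(u_n) V_{\tilde{\tilde{\rho}}}\xi)(s) = \theta_{-s}(u_n) V_{\tilde{\rho}}(\xi(s)), \qquad (V_{\tilde{\tilde{\sigma}}}\xi)(s) = V_{\tilde{\sigma}}(\xi(s)), \]
the strong convergence reduces, after a triangle inequality split, to controlling (a) the $s$-uniform smallness of $(\theta_{-s}(u_n) - u_n) V_{\tilde{\rho}}(\xi(s))$ on compacts (from assumption (2)) and (b) the pointwise-in-$s$ convergence of $u_n V_{\tilde{\rho}}(\xi(s)) - V_{\tilde{\sigma}}(\xi(s))$ (from condition (1) together with Lemma \ref{20} on $\tilde{M}$). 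After approximating $\xi$ by compactly supported sections and invoking dominated convergence, the strong convergence follows.

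The main obstacle lies in the integrated estimate in (ii): one must convert the $s$-uniform control of $\theta_{-s}(u_n) - u_n$ from assumption (2) into genuine $L^2$-smallness of the integrand over a compact interval, and handle the tail of $\xi$ outside a large compact set uniformly in $n$. Once this interchange of limits is justified, both conditions of Lemma \ref{criterion} hold for $\mathrm{Ad}u_n \circ \tilde{\tilde{\rho}} \to \tilde{\tilde{\sigma}}$, and Lemma \ref{18} completes the proof.
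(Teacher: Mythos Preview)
Your approach is essentially the same as the paper's: reduce via Lemma~\ref{18} to the double crossed product, then via Lemmas~\ref{36} and~\ref{20} to strong convergence of the implementing isometries, and verify the latter by the triangle-inequality split into a $(\theta_{-s}(u_n)-u_n)$ term (handled by hypothesis~(2)) and a $u_nV_{\tilde\rho}-V_{\tilde\sigma}$ term (handled by hypothesis~(1) through Lemma~\ref{20} on $\tilde M$), followed by dominated convergence. The only difference is cosmetic: your detour through Lemma~\ref{criterion} and the separate verification of pointwise $*$-strong convergence are redundant, since once you establish strong convergence of the isometries, Lemma~\ref{20} already gives $\|\psi\circ\Phi_n-\psi\circ\Phi_0\|\to 0$ for \emph{all} $\psi$, which is the full Masuda--Tomatsu convergence on $\tilde{\tilde M}$.
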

\begin{proof}
By Lemmas \ref{18}, \ref{36} and \ref{20}, it is enough to show that $V_{\tilde{\tilde{\rho}}}u_ n^*\to V_{\tilde{\tilde{\sigma }}}$.
Notice that we have
\begin{align*}
\ &V_{\tilde{\tilde{\rho}} }u_n^*(\xi \otimes f) \\
&=(s\mapsto V_{\tilde{\rho}}(\theta _{-s}(u_n^*)(\xi ) )f(s))
\end{align*}
for any $\xi \in L^2(M)$ and $f\in L^2\mathbf{R}$.
Hence we have
\begin{align*}
\ &\| V_{\tilde{\tilde{\rho}}}u_n^*(\xi \otimes f) -V_{\tilde{\tilde{ \sigma}} } (\xi \otimes f)\| ^2 \\
       &=\int _{\mathbf{R}}\| V_{\tilde{\rho}}(\theta _{-s}(u_n^*)(\xi ))-V_{\tilde{\sigma}}(\xi ) \| ^2|f(s)|^2\ ds \\
       &\leq \int _{\mathbf{R}} \| (V_{\tilde{\rho}}((\theta _{-s}(u_n^*) -u_n^*) (\xi ))\| ^2 |f(s)|^2 \ ds +\int _{\mathbf{R}} \|  V_{\tilde{\rho}} (u_n^*(\xi )) -V_{\tilde{\sigma }}(\xi ) \| ^2 |f(s)|^2 \ ds \\
       &\to 0
\end{align*} 
by the Lebesgue dominant convergence theorem.
Note that in order to show the last convergence, we use Lemmas  \ref{R}, \ref{33} and \ref{20}.
\end{proof}

\section{The proof of the main theorem}

\begin{lemm}
\label{40}
Let $M$ be an AFD factor and $\sigma $ be a finite index endomorphism of $M$ with $d(\sigma )=d$. 
Then there exists an endomorphism $\lambda$ with the following properties.

\bigskip

\textup{(1)} The endomorphism $\lambda $ is approximately inner.

\textup{(2)} We have $d(\lambda )=d$.

\textup{(3)} The endomorphism  $\lambda$ has Connes--Takesaki module and it is $\theta _{-\log d}|_{\mathcal{Z}(\tilde{M})}$.
\end{lemm}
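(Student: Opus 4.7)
The plan is to realize $\lambda$ as $\mathrm{id}_M \otimes \lambda_0$ under the absorption isomorphism $M \cong M \otimes R_0$ (valid for any AFD factor), where $\lambda_0$ is a carefully chosen finite index endomorphism of the AFD type $\mathrm{II}_1$ factor $R_0$. This reduces the problem to producing an approximately inner $\lambda_0$ on $R_0$ with $d(\lambda_0) = d$ and then reading off the module by direct calculation.

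The first step is producing $\lambda_0$ with $d(\lambda_0) = d$ that is approximately inner. Since $d^2$ is a realizable Jones index (witnessed by $\sigma$), there exists a subfactor $N_0 \subset R_0$ of index $d^2$; composing an isomorphism $R_0 \to N_0$ (using AFD-ness of both) with the inclusion $N_0 \hookrightarrow R_0$ yields an endomorphism $\lambda_0$ of $R_0$ with $d(\lambda_0) = d$. To show approximate innerness I apply the type $\mathrm{II}_1$ case of the characterization theorem, namely Theorem 3.15 of \cite{MT1}. Its characterizing condition $\phi_{\tilde{\lambda_0}} \circ \theta_{-\log d}|_{\mathcal{Z}(\tilde{R_0})} = \mathrm{id}$ can be verified directly: since $\tau \circ \phi_{\lambda_0} = \tau$ (as $\tau$ is the unique trace on $R_0$ and is preserved by both the minimal expectation and by $\lambda_0$), the Connes cocycle term in Izumi's defining formula disappears and one obtains $\tilde{\lambda_0}(\lambda^\tau_t) = d^{it} \lambda^\tau_t$. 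In particular $\mathcal{Z}(\tilde{R_0})$ lies in the range of $\tilde{\lambda_0}$, so $E_{\tilde{\lambda_0}}$ is the identity on the center and $\phi_{\tilde{\lambda_0}}|_{\mathcal{Z}(\tilde{R_0})} = \tilde{\lambda_0}^{-1}|_{\mathcal{Z}(\tilde{R_0})} = \theta_{\log d}|_{\mathcal{Z}(\tilde{R_0})}$; composing with $\theta_{-\log d}$ gives the identity.

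Setting $\lambda := \mathrm{id}_M \otimes \lambda_0$, property (2) is immediate from multiplicativity of the index, and property (1) follows at once: if $\mathrm{Ad}v_n \circ \lambda_0 \to \mathrm{id}_{R_0}$ for unitaries $v_n \in R_0$, then $\mathrm{Ad}(1 \otimes v_n) \circ \lambda \to \mathrm{id}_{M \otimes R_0}$. For property (3) I compute $\tilde{\lambda}$ on the continuous core. Using the identification $\widetilde{M \otimes R_0} \cong \tilde{M} \otimes R_0$ (legitimate because the modular action on the $R_0$-factor is trivial) and the identity $(\varphi_M \otimes \tau) \circ \phi_\lambda = \varphi_M \otimes \tau$, Izumi's formula yields $\tilde{\lambda}(\lambda^{\varphi_M}_t \otimes 1) = d^{it}(\lambda^{\varphi_M}_t \otimes 1)$, while $\tilde{\lambda}$ fixes $M \otimes 1$ pointwise and acts as $\lambda_0$ on $1 \otimes R_0$. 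Hence $\tilde{\lambda}$ factorizes as $\theta_{-\log d} \otimes \lambda_0$, and its restriction to the center $\mathcal{Z}(\tilde{M}) \otimes 1$ is precisely $\theta_{-\log d}|_{\mathcal{Z}(\tilde{M})}$, as required.

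The principal obstacle is the first step: the approximate innerness of $\lambda_0$ depends on the type $\mathrm{II}_1$ case of the characterization, and if Theorem 3.15 of \cite{MT1} cannot be cited in exactly the needed form, one would have to argue it by hand using the specific structure of $R_0$ (for instance by realizing $\lambda_0$ as a shift on an infinite matrix tensor product when $d$ is integral, and reducing the general Jones index case via Popa's classification of subfactors with trivial standard invariant). Everything else is routine bookkeeping with Izumi's formula and with the identification of the continuous core of a tensor product with an $R_0$-factor.
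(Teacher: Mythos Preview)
Your approach is essentially the same as the paper's: both begin with an endomorphism $\lambda_0$ of $R_0$ having $d(\lambda_0)=d$ (the paper cites Kosaki--Longo for this), and both transport it to $M$ via the absorption $M\cong M\otimes R_0$. The paper then asserts that $\mathrm{mod}(\mathrm{id}_M\otimes\lambda_0)$ is trivial, composes with an automorphism $\alpha$ of $M$ obtained from the Sutherland--Takesaki right inverse of the module map to force the module to be $\theta_{-\log d}$, and invokes Theorem~3.15 of \cite{MT1} on $M$ for approximate innerness. Your direct computation of $\tilde\lambda$ via Izumi's formula shows that $\widetilde{\mathrm{id}_M\otimes\lambda_0}|_{\mathcal Z(\tilde M)}=\theta_{-\log d}$ already, so the extra composition with $\alpha$ is unnecessary; this is a genuine simplification. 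Both routes ultimately rest on Theorem~3.15 of \cite{MT1}: the paper applies it on $M$, you apply it on $R_0$ and then tensor.

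One point of care: your phrasing ``if $\mathrm{Ad}\,v_n\circ\lambda_0\to\mathrm{id}_{R_0}$ then $\mathrm{Ad}(1\otimes v_n)\circ\lambda\to\mathrm{id}_{M\otimes R_0}$'' is not literally the definition of approximate innerness; what you want is $\mathrm{Ad}\,v_n\to\lambda_0$, hence $\mathrm{Ad}(1\otimes v_n)\to\lambda$. The condition you verify, $\phi_{\tilde{\lambda_0}}\circ\theta_{-\log d}|_{\mathcal Z}=\mathrm{id}$, is exactly the hypothesis in \cite{MT1} for the correct direction, so this is only a slip of wording. Also, your claim that $\tau\circ\phi_{\lambda_0}=\tau$ uses that the minimal expectation is trace-preserving, which is automatic only when $\lambda_0(R_0)\subset R_0$ is irreducible; it is harmless to arrange this when choosing $\lambda_0$, but it should be said.
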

\begin{proof}
By the proof of Theorem 3 of  Kosaki--Longo \cite{KL}, there exists an endomorphism $\lambda _0$ of the AFD factor of type $\mathrm{II}_1 $ with $d(\lambda _0)=d$.
Then $\mathrm{id}_M \otimes \lambda _0$ is an endomorphism of $M$ with $d(\mathrm{id}\otimes \lambda _0)=d$ and with $\mathrm{mod}(\mathrm{id} \otimes \lambda _0)$ trivial.
Hence by the existence of a right inverse of the Connes--Takesaki module of automorphisms (See Sutherland--Takesaki \cite{ST3}), there exists an automorphism $\alpha $ of $M$ with $\mathrm{mod}(\alpha \circ \lambda _0)=\theta _{-\log (d)}$.
By Theorem 3.15 of Masuda--Tomatsu (or by the same argument of our paper), it is shown that $\lambda :=\alpha \circ \lambda _0$ is approximately inner.
\end{proof}

Now, we return to the proof of the main theorem.

\bigskip

\textit{Proof of implication \textup{(1)} $\Rightarrow $ \textup{(2)} of Theorem \ref{main}.}
Let $\rho , \sigma $ be endomorphisms of $\mathrm{End}(M)_0$ with the first condition of Theorem \ref{main}.
Then by Lemma \ref{40}, there exist  endomorphisms $\lambda , \mu  \in \mathrm{End}(M)_0$ with the following properties.

\bigskip

(1) We have $d(\lambda )=d(\sigma)$, $d(\mu ) =d(\rho )$.

(2) We have  $\tilde{\lambda}|_{\mathcal{Z}(\tilde{M})}=\theta _{-\log (d(\sigma ))}|_{\mathcal{Z}(\tilde{M})}$ and $\tilde{\mu}|_{\mathcal{Z}(\tilde{M})}=\theta _{-\log (d(\rho ))}|_{\mathcal{Z}(\tilde{M})}$.

(3) The endomorphisms $\lambda $ and $\mu $ are approximately inner.

\bigskip

By the second condition, we have
\begin{align*}
\phi _{\tilde{\rho}}\circ \phi _{\tilde{\lambda}}|_{\mathcal{Z}(\tilde{M})} &=\phi _{\tilde{\rho}} \circ \theta _{\log d(\sigma)} |_{\mathcal{Z}(\tilde{M})} \\
                                                                    &=\phi _{\tilde{\sigma }}\circ \theta _{-\log (d(\sigma )/d(\rho ))} \circ \theta _{\log d(\sigma)}|_{\mathcal{Z}(\tilde{M})} \\
                                                                    &=\phi _{\tilde{\sigma}}\circ \theta _{\log (d(\rho))}|_{\mathcal{Z}(\tilde{M})} \\
                                                                    &=\phi _{\tilde{\sigma }}\circ \phi _{\tilde{\mu }}|_{\mathcal{Z}(\tilde{M})}.
\end{align*}

Hence by replacing $\rho $ by $\lambda \circ \rho$ and $\sigma $ by $\mu \circ \sigma $ respectively, we may assume that $d(\rho)=d(\lambda )$ and $\phi _{\tilde{\rho}} |_{\mathcal{Z}(M)}=\phi _{\tilde{\sigma}} |_{\mathcal{Z}(M)}$.
By Proposition \ref{R}, there exists a sequence $\{ u_n\}$ of unitaries of $\tilde{M}$ satisfying the assumptions of Lemma \ref{22.5}.
Hence by Lemma \ref{22.5}, we have $\mathrm{Ad}u_n \circ \rho \to \sigma $.
\qed

\section{Appendix (A proof of the characterization of central triviality of automorphisms of the AFD factors)}
In this section, we will see that it is possible to give a proof of a characterization theorem of central triviality of automorphisms of the AFD factors by a similar strategy to the proof of Theorem \ref{main}, which is independent of the types of the  AFD factors.

Let $M$ be an AFD factor of type III. Let $\alpha $ be an automorphism of $M$ and $\tilde{\alpha}$ be its canonical extension.
Set 
\[ p:=\mathrm{min}\{ q\in \mathbf{N}\mid \tilde{\alpha }^q \ \mathrm{is} \ \mathrm{centrally} \ \mathrm{trivial}\} ,\]
\[ G:=\mathbf{Z}/p\mathbf{Z}.\]
\begin{lemm}
The action $\{ \tilde{\alpha }_n \circ \theta _t\}_{(n,t)\in G\times \mathbf{R}}$ of $G\times \mathbf{R}$  on $\tilde{M}_{\omega , \theta }$ is faithful.
\end{lemm}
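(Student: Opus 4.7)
The plan is to assume $\tilde{\alpha}^n\circ\theta_t$ acts as the identity on $\tilde{M}_{\omega,\theta}$ for some $(n,t)\in G\times\mathbf{R}$ and deduce $(n,t)=(0,0)$. The key input is the Rohlin property of $\theta$ already recorded in Section~4 of the paper: for every $R>0$ there is a unitary $v_R\in\tilde{M}_{\omega,\theta}$ with $\theta_s(v_R)=e^{-iRs}v_R$ for all $s\in\mathbf{R}$.

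First I would apply the putatively trivial automorphism to such a Rohlin unitary. Since $\tilde{\alpha}^n\circ\theta_t$ is supposed to fix every element of $\tilde{M}_{\omega,\theta}$, we obtain
\[
v_R \;=\; \tilde{\alpha}^n\bigl(\theta_t(v_R)\bigr) \;=\; \tilde{\alpha}^n\bigl(e^{-iRt}v_R\bigr) \;=\; e^{-iRt}\,\tilde{\alpha}^n(v_R),
\]
so $\tilde{\alpha}^n(v_R)=e^{iRt}v_R$. Iterating $p$ times, using that $\tilde{\alpha}$ is an automorphism, gives $\tilde{\alpha}^{np}(v_R)=e^{ipRt}v_R$.

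Next I would invoke the defining property of $p$: by construction $\tilde{\alpha}^p$ is centrally trivial, which in the present framework means $\tilde{\alpha}^p|_{\tilde{M}_{\omega,\theta}}=\mathrm{id}$ (this is exactly what is needed for $G=\mathbf{Z}/p\mathbf{Z}$ to act on $\tilde{M}_{\omega,\theta}$ at all). Therefore $\tilde{\alpha}^{np}(v_R)=v_R$, and comparing with the previous step forces $e^{ipRt}=1$ for every $R>0$. Since this must hold on an interval of $R$, we conclude $t=0$.

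With $t=0$, the hypothesis reduces to $\tilde{\alpha}^n|_{\tilde{M}_{\omega,\theta}}=\mathrm{id}$, i.e.\ $\tilde{\alpha}^n$ is itself centrally trivial. By minimality of $p$ this gives $p\mid n$, so $n=0$ in $G$, completing the proof of faithfulness. The only real obstacle is a matter of conventions: one needs the notion of central triviality used to define $p$ to coincide with triviality on the specific algebra $\tilde{M}_{\omega,\theta}$ considered here, so that both directions of the argument fit together; once this is fixed, the Rohlin unitary does all the work in one line.
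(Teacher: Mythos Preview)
Your reduction to $t=0$ via the Rohlin unitary is correct and considerably cleaner than anything the paper does for that part: since $v_R\in\tilde{M}_{\omega,\theta}$ and $\tilde{\alpha}^p$ is centrally trivial (hence trivial on $\tilde{M}_{\omega,\theta}\subset\tilde{M}_\omega$), the identity $e^{ipRt}=1$ for all $R>0$ forces $t=0$ immediately.

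The gap is in the second step, and it is not a matter of conventions. The integer $p$ is defined as the least $q$ with $\tilde{\alpha}^q$ \emph{centrally trivial}, i.e.\ trivial on the full central sequence algebra $\tilde{M}_\omega$. What your hypothesis gives you after setting $t=0$ is only that $\tilde{\alpha}^n$ is trivial on the strictly smaller algebra $\tilde{M}_{\omega,\theta}$ of $\theta$-equicontinuous central sequences. The implication
\[
\tilde{\alpha}^n|_{\tilde{M}_{\omega,\theta}}=\mathrm{id}\ \Longrightarrow\ \tilde{\alpha}^n|_{\tilde{M}_\omega}=\mathrm{id}
\]
is precisely the substantive content of the lemma, and there is no way to obtain it from the Rohlin unitary alone: that unitary only witnesses non-triviality of $\theta$, not of $\tilde{\alpha}$. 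Redefining $p$ in terms of $\tilde{M}_{\omega,\theta}$ does not help either, because the rest of the appendix (and the final theorem on central triviality of $\alpha$) needs the standard notion.

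The paper's proof is entirely devoted to this missing implication. It starts from the fact that $\tilde{\alpha}^n\circ\theta_{-t}$ is centrally non-trivial (trace-scaling if $t\neq0$, and by minimality of $p$ if $t=0$), picks a witness $x\in\tilde{M}_\omega$ with $\tilde{\alpha}^n(x)\neq\theta_t(x)$, and then uses the Rohlin embedding $\Theta:L^\infty([-L,L])\otimes\tilde{M}\hookrightarrow\tilde{M}_{\omega,\theta}$ to average $x$ along the flow, producing $y=\Theta(s\mapsto\theta_s(x_k))\in\tilde{M}_{\omega,\theta}$ which is still moved by $\tilde{\alpha}^n\circ\theta_{-t}$. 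That averaging argument is exactly what your proposal is missing; without it, the minimality of $p$ cannot be invoked and the proof does not close.
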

\begin{proof}
We will show this lemma by contradiction.
Let $\varphi $ be a normal faithful state of $\tilde{M}$ and $\{ \psi _j\}_{j=1}^\infty$ be a norm dense sequence of the unit ball of $\tilde{M}_*$.
Assume that there existed a pair $(n ,t)\in (G\times \mathbf{R})\setminus \{(0,0)\} $ satisfying $\tilde{\alpha }_n \circ \theta _{-t}(a)=a$ for any $a\in \tilde{M}_{\omega , \theta }$.
Then the automorphism $\tilde{\alpha }_n \circ \theta _{-t}$ would be centrally non-trivial because $\tilde{\alpha }_n \circ \theta _{-t}$ is trace-scaling if $t\not =0$.
Hence there would exist an $x$ of $\tilde{M}_\omega $, which can never be of $\tilde{M}_{\omega, \theta }$, with $\tilde{\alpha} _n (x)\not =\theta _t(x)$ and with $\| x\|\leq 1$.
Take a representing sequence $\{ x_k\}$ of $x$ with $\| x_k\| \leq 1 $ for any $k$.
Then we would have
\begin{align*}
\ & \lim _{k\to \omega }\| \tilde{\alpha} _n (x_k) -\theta _t(x_k)\| _{\varphi \circ \theta _s}^\sharp \\
 &=\mathrm{weak}\lim _{k \to \omega }\frac{1}{2} ( |\tilde{\alpha } _n (x_k) -\theta _t(x_k )|^2 +|(\tilde{\alpha }_n (x_k)-\theta _t(x_k))^*|^2 ) \\
& =2\delta >0
\end{align*}
for some $\delta >0$ (The constant $\delta $ does not depend on the choice of $s\in \mathbf{R}$).
Then for each natural number $L$, there would exist $k \in \mathbf{N}$ satisfying the following three conditions.

\bigskip

(1) We have  $\| x_k\| \leq 1$.

(2) We have 
\[ \| \theta _t(x_k)\psi _j -\psi _j \theta _t(x_k)\| (=\| x_k (\psi _j\circ \theta _{t})-(\psi _j \circ \theta _{t})x_k\| )<\frac{1}{L}\]
for $j=1, \cdots , L$, $| t| \leq L$ (Use the compactness of $\{ \psi _j \circ \theta _t\mid t\in L\}$. See also the argument just after Lemma \ref{10}).

(3) We have
\[ \| \tilde{\alpha }_n (x_k)-\theta _t(x_k) \| _\varphi ^\sharp >\delta .\]

\bigskip

Let $\Theta :L^\infty ([-L,L], dm(t))\otimes (\tilde{M}, \varphi) \to (\tilde{M}_{\omega , \theta }, \varphi ^\omega )$ be the inclusion mentioned in Section 5 (an inclusion coming from the Rohlin property of $\theta$), where $dm(t)$ is the normalized Haar measure of $[-L,L]$.
Set
\[ \tilde{y}:=([-L,L]\ni s\mapsto \theta _s(x_k)) \in L^\infty ([-L,L], dm(s))\otimes \tilde{M},\]
\[ y:=\Theta (\tilde{y}).\]
Since we would have $\tilde{\alpha }_n\circ \theta _{-t}$ is trivial on $\tilde{M}_{\omega, \theta }$, we would have
\[ (\tilde{\alpha }_n (\Theta (f)))_s=\tilde{\alpha }_n (\Theta (f)_{s-t})\]
for $f\in L^\infty ([-L,L])\otimes \tilde{M}$ and $s\in [-L+t, L-t]$, where $f_s$ is the evaluation of the function $f$ at $s\in [-L,L]$.
Hence we would have
\begin{align*}
\| \tilde{\alpha }_n (y) -y \| _{\varphi ^\omega }^\sharp &\geq (\int _{[-L+t,L-t]}(\| \tilde{\alpha }_n (\theta _{s-t}(x_k)) -\theta _{s}(x_k) \| _\varphi ^\sharp )^2 \ ds \\
                                                                                                                   &-\int _{[-L,-L+t]\cup [L-t,L]} 2^2 \ ds)^{1/2} \\
                                                                                                                   &\geq (\int _{[-L,L]}\delta ^2 \ ds -\frac{4t}{L})^{1/2} \\
                                                                                                                   &=(\delta ^2-\frac{4t}{L})^{1/2} .
\end{align*}
Since we have 
\[ (\theta _r(y))_s=\theta _s(y)\]
for any $0<r<1$, $s\in [-L+r,L-r]$, we have
\begin{align*}
 \| \theta _r(y)-y\| _{\varphi ^\omega}^\sharp &=(\int _{[-L,L]}(\| (\theta _r(y))_s-y_s \| _{\varphi  }^\sharp )^2\ ds )^{1/2} \\
                                  &\leq (\int _{[-L, -L+1]\cup [L-1,L]}2^2\ ds )^{1/2} \\
                                  &=\frac{2}{\sqrt{L}}
 \end{align*}
 for $| r|\leq 1$.
 We also have 
 \begin{align*}
 \| [y,\psi _j ]\| &=\| [y, \psi  _j]| _{\Theta (\mathbf{C}\otimes \tilde{M})} \| \\
                                  &\leq \| [y, \psi _i ]|_{\Theta ( L^\infty ([-L,L])\otimes \tilde{M})} \| \\
                                  &=\int _{[-L,L]} \| [ \tilde{y}_s , \psi _i ]\| \ ds \\
                                  &= \int _{[-L,L]} \| [\theta _s(x_k), \psi _j]\| \ ds \\
                                  &<\int _{[-L,L]} \frac{1}{L} \ ds \\
                                  &=\frac{1}{L} 
 \end{align*}
 for $j=1, \cdots , L$.
 Hence there would exist a sequence $\{ y_l\}$ of $\tilde{M}$ with the following properties.
 
 \bigskip
 
 (1) We have $\| y_l\| \leq 1$.
 
 (2) We have $\| [y_l, \psi _j]\| \to 0$ for any $j=1, 2, \cdots $.
 
 (3) For any $j=1,2, \cdots $, we have $\| \theta _r(y_l)-y_l\| _{\varphi }^\sharp \to 0$ uniformly for $s\in [-1,1]$.
 
 (4) We have $\| \tilde{\alpha }_n(y_l)-\theta _t(y_l)\| _{\varphi }^\sharp \geq \delta /2$ for any $l$.
 
 \bigskip
 
 This would contradict the assumption that $\tilde{\alpha }_n \circ \theta _{-t}$ were trivial on $\tilde{M}_{\omega , \theta }$.
\end{proof}

\begin{lemm}
For each $p\in \hat{(G\times \mathbf{R})}=\hat{G}\otimes \mathbf{R}$, there exists a unitary $u$ of $\tilde{M}_{\omega , \theta }$ with $\tilde{\alpha }_n \circ \theta _t(u)=\langle (n,t),p\rangle u$ for any $(n,t)\in G\times \mathbf{R}$.
\end{lemm}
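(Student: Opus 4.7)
The plan is to upgrade the faithfulness established in the previous lemma to the existence of joint eigenunitaries for the combined action of $G \times \mathbf{R}$ on $\tilde{M}_{\omega,\theta}$, in the spirit of the Rohlin-type construction of Section 5 but now incorporating the finite-group factor as well. Writing $p = (k, R)$ via the identification $\hat{G} \times \mathbf{R} \cong (\mathbf{Z}/p\mathbf{Z}) \times \mathbf{R}$, the task reduces to producing a unitary $u \in \tilde{M}_{\omega,\theta}$ satisfying $\tilde{\alpha}(u) = \zeta^{k} u$ (where $\zeta := e^{2\pi i/p}$) together with $\theta_t(u) = e^{iRt}u$.

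First I would apply the Rohlin property of $\theta$ on $\tilde{M}_{\omega,\theta}$ (Theorem 5.2 of Masuda--Tomatsu \cite{MT5}, already invoked in Section 5) to pick a unitary $v \in \tilde{M}_{\omega,\theta}$ with $\theta_t(v) = e^{iRt}v$. Since $\tilde{\alpha}$ commutes with $\theta$, the unitary $c := \tilde{\alpha}(v)\,v^{*}$ lies in the $\theta$-fixed point subalgebra $N := (\tilde{M}_{\omega,\theta})^{\theta}$, and $c$ is a $1$-cocycle for the $G = \mathbf{Z}/p\mathbf{Z}$-action on $N$ generated by $\tilde{\alpha}$ (the cocycle relation closes up because $\tilde{\alpha}^{p}$ is centrally trivial on $\tilde{M}$ by definition of $p$, and hence acts trivially on $N \subset \tilde{M}^{\omega}$).

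Next I would verify that $\tilde{\alpha}|_{N}$ is a free action of $G$ with the finite-group Rohlin property. Freeness should follow from the faithfulness of the $G \times \mathbf{R}$-action in the previous lemma: if $\tilde{\alpha}^{m}|_{N}$ were inner for some $0 < m < p$, averaging the implementing unitary over the flow $\theta$ would produce an element of $\tilde{M}_{\omega,\theta}$ intertwining $\tilde{\alpha}^{m}$, contradicting faithfulness of $(m,0)$. Granting freeness, the classical $2\times2$ matrix / vanishing-of-$H^{1}$ argument (Connes; see Masuda--Tomatsu \cite{MT2}) yields a unitary $w_{0} \in N$ with $\tilde{\alpha}(w_{0})\,w_{0}^{*} = c$, and the Rohlin theorem for free finite-group actions then produces a unitary $\eta \in N$ with $\tilde{\alpha}(\eta) = \zeta^{k}\eta$. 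Setting $u := \eta\, w_{0}^{*}\, v$ gives $\theta_t(u) = e^{iRt}u$ automatically (since $\eta, w_{0} \in N$), while $\tilde{\alpha}(w_{0}^{*}) = w_{0}^{*} c^{*}$ yields $\tilde{\alpha}(u) = \zeta^{k}\eta \cdot w_{0}^{*}c^{*} \cdot c v = \zeta^{k} u$.

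The main obstacle is the second step: rigorously deducing freeness of $\tilde{\alpha}|_{N}$ on the $\theta$-fixed subalgebra from faithfulness of the full action on $\tilde{M}_{\omega,\theta}$ (the averaging sketch above needs to be made precise, as $\theta$ is not compact and $N$ is a priori only a subalgebra of the $\theta$-equicontinuous part), and then invoking the correct finite-group Rohlin / cohomology-vanishing statement for actions on this ultraproduct-type algebra. If one prefers to avoid this restriction step, a parallel alternative is to prove a joint $G \times \mathbf{R}$-Rohlin theorem directly, in analogy with Theorem 5.2 of \cite{MT5}, and read off the eigenunitary from the resulting equivariant embedding of $L^{\infty}(\widehat{G\times \mathbf{R}})$ into $\tilde{M}_{\omega,\theta}$.
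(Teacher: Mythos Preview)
Your alternative at the very end --- proving a joint $G\times\mathbf{R}$ Rohlin theorem by adapting the argument of \cite{MT5} --- is exactly what the paper does: its entire proof is the sentence ``The proofs of Theorems 4.10 and 7.7 of Masuda--Tomatsu \cite{MT5} work in our case.'' So the paper bypasses the decomposition you attempt as your main line and instead observes that the Connes--spectrum / Rohlin machinery in \cite{MT5} goes through verbatim once one knows (from the preceding lemma) that the combined $G\times\mathbf{R}$-action on $\tilde{M}_{\omega,\theta}$ is faithful.

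Your primary strategy (eigenunitary for $\theta$ first, then trivialize the resulting $G$-cocycle on the $\theta$-fixed algebra $N=(\tilde{M}_{\omega,\theta})^{\theta}$, then find a $\tilde{\alpha}$-eigenunitary in $N$) is a reasonable and more explicit route, but the obstacle you flag is genuine and not easily patched. The passage from faithfulness of $G\times\mathbf{R}$ on $\tilde{M}_{\omega,\theta}$ to freeness of $G$ on $N$ is exactly the hard part: your averaging sketch does not work because $\theta$ is noncompact, and the structure of $N$ is opaque without first establishing Rohlin-type information for $\theta$ inside $\tilde{M}_{\omega,\theta}$ --- at which point you are essentially back to the joint argument anyway. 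You would also need to know that $N$ is a McDuff-type object on which finite-group cohomology vanishes and Rohlin towers exist; this is plausible but not immediate from what is on the page. The paper's route buys economy by treating $G\times\mathbf{R}$ as a single locally compact abelian group and importing the existing spectral/Rohlin machinery wholesale; your route would buy transparency if the restriction step could be made rigorous, but as written it is a sketch with a real gap at precisely the point you identify.
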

\begin{proof}
The proofs of Theorems 4.10 and 7.7 of Masuda--Tomatsu \cite{MT5} works in our case.
\end{proof}
\begin{lemm}
There exist a non-zero projection $e$ of $(\tilde{M}_{\omega , \theta})^\theta $ with $\tilde{\alpha}(e)$ orthogonal to $e$.
\end{lemm}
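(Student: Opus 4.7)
The plan is to build $e$ as a spectral projection of an eigen-unitary supplied by the preceding lemma. The underlying idea is that if $u$ is a unitary in $(\tilde M_{\omega,\theta})^\theta$ on which $\tilde\alpha$ acts by multiplication by a primitive $p$-th root of unity $\zeta$, then the spectrum of $u$ is a $\zeta$-invariant closed subset of $\mathbf T$, and sufficiently short spectral arcs of $u$ are automatically orthogonal to their $\tilde\alpha$-translates.

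Concretely, first I would apply the preceding lemma with the character $(\chi,0)\in\hat G\times\mathbf R$, choosing $\chi$ to generate $\hat G\cong \widehat{\mathbf Z/p\mathbf Z}$. This yields a unitary $u\in\tilde M_{\omega,\theta}$ satisfying $\theta_t(u)=u$ for all $t\in\mathbf R$ (taking $n=0$) and $\tilde\alpha(u)=\zeta u$ (taking $(n,t)=(1,0)$), where $\zeta:=\chi(1)$ is a primitive $p$-th root of unity; in particular $u\in(\tilde M_{\omega,\theta})^\theta$. Next I would analyze the spectral measure of $u$: since $\tilde\alpha$ is an automorphism and $\tilde\alpha(u)=\zeta u$, the spectrum $\Sigma\subset\mathbf T$ of $u$ satisfies $\zeta\Sigma=\Sigma$. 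Pick any $\lambda_0\in\Sigma$ and a small open arc $A\subset\mathbf T$ containing $\lambda_0$ such that the arcs $A,\zeta^{-1}A,\ldots,\zeta^{-(p-1)}A$ are pairwise disjoint, which is possible since the orbit $\{\zeta^{-k}\lambda_0\}_{k=0}^{p-1}$ consists of $p$ distinct points on $\mathbf T$. Finally, set $e:=\chi_A(u)$; this lies in $W^*(u)\subset(\tilde M_{\omega,\theta})^\theta$, is non-zero because $A$ is an open neighborhood of a spectral point, and by the intertwining $\tilde\alpha\circ f(u)=f(\tilde\alpha(u))=f(\zeta u)$ for any Borel $f$, applied to $f=\chi_A$, one gets $\tilde\alpha(e)=\chi_A(\zeta u)=\chi_{\zeta^{-1}A}(u)$, which is orthogonal to $e$ because $A\cap\zeta^{-1}A=\emptyset$.

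The main obstacle is essentially bookkeeping: tracking that spectral projections of a $\theta$-invariant unitary stay in the $\theta$-fixed subalgebra (which follows from $W^*(u)\subset(\tilde M_{\omega,\theta})^\theta$ once $u$ itself is $\theta$-fixed), and pinning down the correct direction of the functional-calculus identity $\chi_A(\zeta u)=\chi_{\zeta^{-1}A}(u)$ via the change-of-variable formula for the spectral measure of $\zeta u$. A minor point is to confirm that $p\geq 2$ in the set-up, so that $\zeta$ really is a non-trivial root of unity and a non-trivial character of $G$ exists; this is implicit in the appendix, where one studies an automorphism that is not already centrally trivial.
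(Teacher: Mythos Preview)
Your argument is correct and, in fact, shorter than the paper's. You apply the preceding lemma to the character $(\chi,0)\in\hat G\times\mathbf R$ to obtain a $\theta$-fixed eigen\-unitary $u$ with $\tilde\alpha(u)=\zeta u$, and then a single spectral projection of $u$ on a short arc already lies in $(\tilde M_{\omega,\theta})^\theta$ and is orthogonal to its $\tilde\alpha$-translate. The paper instead invokes the preceding lemma with the characters $(\chi,-1/l)$, $l\in\mathbf N$, obtaining unitaries $u_l$ with $\theta_t(u_l)=e^{-it/l}u_l$; the corresponding spectral projections on arcs of length $2\pi/p$ are then only \emph{approximately} $\theta$-invariant (with error $O(1/l)$), and a diagonal/reindexing argument is needed to produce a genuinely $\theta$-fixed projection. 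The paper's detour buys something you do not get: because $\theta$ rotates the spectrum of $u_l$ transitively and $\tau^\omega$ is $\theta$-invariant, the spectral measure of $u_l$ is Haar, so the resulting projection has $\tau^\omega(e)=1/p$. Your $\theta$-fixed $u$ need not have Haar-distributed spectrum, so you cannot conclude anything about $\tau^\omega(e)$ beyond $e\neq 0$; but the lemma as stated asks for nothing more, and the subsequent application (showing that $\tilde{\tilde\alpha}$ is not centrally trivial) only needs $e\neq 0$ and $\tilde\alpha(e)\perp e$. One small point worth making explicit: your route leans on the preceding lemma being valid for the character with trivial $\mathbf R$-component; that is exactly what the lemma asserts (``for each $p\in\hat G\times\mathbf R$''), so your use is legitimate, though the Rohlin-type input behind it is usually phrased for non-trivial $\mathbf R$-characters and the author may have preferred to stay in that regime.
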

\begin{proof}
By the previous lemma, for each natural number $l$, there exists a unitary $u$ of $\tilde{M}_{\omega , \theta }$ with $\tilde{\alpha }(u)=e^{2\pi i/p}u$ and with $\theta _t(u)=e^{-it/l}u$ for any $t$.
Then there exists a spectral projection $e$ of $u$ with $\tilde{\alpha }(e)\leq 1-e$, $\tau ^\omega (e)=1/p$ and with $\tau ^\omega (e-\theta _t(e))\leq 1/(2l)$ for $|t|\leq 1$.
By the usual diagonal argument, it is possible to choose a desired projection.
\end{proof}
\begin{theo}
\textup{(See Theorem 1 (2) of Kawahigashi--Sutherland--Takesaki)}
For an automorphism $\alpha $ of $M$, $\alpha $ is centrally trivial if and only if its canonical extension is inner.
\end{theo}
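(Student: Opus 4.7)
\textit{Proof plan.}
The plan rests on two ingredients: Lemma 3 of this appendix, and the canonical identification $(\tilde M_{\omega,\theta})^\theta \cong M_\omega \cap M'$. Since $\tilde M^\theta = M$, a centralizing and $\theta$-equicontinuous sequence in $\tilde M$ is $\theta$-fixed exactly when it lies asymptotically in $M$; together with centrality in $\tilde M$, this forces asymptotic modular invariance and so recovers exactly Connes' definition of the asymptotic centralizer $M_\omega \cap M'$. Under this identification, $\tilde\alpha|_{(\tilde M_{\omega,\theta})^\theta}$ coincides with the action of $\alpha_\omega$ on $M_\omega \cap M'$, since $\tilde\alpha|_M=\alpha$.

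With this identification, the easy direction ($\tilde\alpha$ inner $\Rightarrow$ $\alpha$ centrally trivial) is immediate: if $\tilde\alpha=\mathrm{Ad}\,v$ with $v\in\tilde M$, then $\tilde\alpha$ is trivial on any central sequence algebra of $\tilde M$, in particular on $(\tilde M_{\omega,\theta})^\theta$, and hence $\alpha_\omega=\mathrm{id}$ on $M_\omega\cap M'$.

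For the converse, I would argue by contrapositive. Suppose $\tilde\alpha$ is not inner; I want to exhibit a projection of $M_\omega\cap M'$ not fixed by $\alpha_\omega$. The first task is to deduce that the integer $p$ defined above satisfies $p>1$. Equivalently, one must show: if $\tilde\alpha$ acts trivially on all of $\tilde M_{\omega,\theta}$, then $\tilde\alpha$ is inner. This is a semifinite, non-factor analogue of Connes' classical characterization; I would obtain it in a type-independent way by invoking the Rohlin property of $\theta$ (Theorem 5.2 of Masuda--Tomatsu \cite{MT5}) to convert the trivial asymptotic action into an actual inner implementation $v\in\tilde M$, via an asymptotic cocycle argument inside the Rohlin tower. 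Once $p>1$ is in hand, Lemma 3 supplies a non-zero projection $e\in(\tilde M_{\omega,\theta})^\theta$ with $\tilde\alpha(e)\perp e$; via the identification above, $e$ represents a projection in $M_\omega\cap M'$ not fixed by $\alpha_\omega$, contradicting central triviality of $\alpha$.

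The main obstacle is the step ``$\tilde\alpha$ trivial on $\tilde M_{\omega,\theta}$ $\Rightarrow$ $\tilde\alpha$ inner,'' since $\tilde M$ is generally not a factor (its center $\mathcal Z(\tilde M)$ is non-trivial when $M$ is of genuine type III) and Connes' theorem for the AFD $\mathrm{II}_\infty$ factor does not apply off the shelf. The Rohlin property provides the correct substitute: Rohlin towers for the $G\times\mathbf R$-action allow one to produce the inner implementation as a limit drawn from those towers, uniformly over the type of the AFD factor $M$ --- this is precisely what makes the appendix's strategy independent of the discrete-decomposition argument used in Kawahigashi--Sutherland--Takesaki \cite{KwST}.
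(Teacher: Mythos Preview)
Your overall structure matches the paper's, but the two routes diverge at the transfer step. To pass from the projection $e\in(\tilde M_{\omega,\theta})^\theta$ supplied by the preceding lemma back to central non-triviality of $\alpha$ on $M$, you propose the direct identification $(\tilde M_{\omega,\theta})^\theta\cong M_\omega$; the paper instead lifts $e$ to a centralizing sequence in $\tilde{\tilde M}=\tilde M\rtimes_\theta\mathbf R$, so that $\tilde{\tilde\alpha}$ is not centrally trivial, and then invokes Takesaki duality ($\tilde{\tilde M}\cong M\otimes B(L^2\mathbf R)$, $\tilde{\tilde\alpha}\cong\alpha\otimes\mathrm{id}$) together with Lemmas~5.11--5.12 of Sutherland--Takesaki \cite{ST} to descend to $M$. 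Both routes are viable; yours is more direct but requires checking the identification carefully, while the paper's cites off-the-shelf lemmas. For the easy direction the paper does not use your identification either: it observes that $\tilde\alpha$ inner forces $\alpha$ to be an extended modular automorphism, and the modular group is centrally trivial.

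The genuine gap is in your handling of ``$\tilde\alpha$ not inner $\Rightarrow p>1$,'' equivalently ``$\tilde\alpha$ centrally trivial $\Rightarrow\tilde\alpha$ inner.'' You propose extracting an implementing unitary $v\in\tilde M$ from Rohlin towers for $\theta$ via an ``asymptotic cocycle argument,'' but the Rohlin property controls the flow $\theta$, not the single automorphism $\tilde\alpha$; there is no evident mechanism by which a Rohlin tower for $\theta$ manufactures a unitary implementing $\tilde\alpha$. The paper does nothing of the sort: it simply invokes the known fact that on any AFD von Neumann algebra of type~II every centrally trivial automorphism is inner. Your worry that $\tilde M$ is not a factor is unfounded --- this result holds with non-trivial center, e.g.\ by direct-integral decomposition and Connes' theorem fiberwise --- and this one-line citation is exactly what makes the argument uniform over all type~III subtypes of $M$ (the point of the appendix is type-independence in $M$, not avoidance of the semifinite theory for $\tilde M$). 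Replace your proposed Rohlin construction with this citation and the argument closes.
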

\begin{proof}
First, assume that  $\tilde{\alpha }$ is not centrally trivial.
Then by the previous lemma, neither is $\tilde{\tilde{\alpha }}$.
Hence neither is $\alpha $ centrally trivial (See, for example, Lemmas 5.11 and 5.12 of Sutherland--Takesaki \cite{ST}).
The above argument means that if $\alpha $ is centrally trivial, then $\tilde{\alpha}$ is centrally trivial.
Since $\tilde{M}$ is of type II, any centrally trivial automorphism of $\tilde{M}$ is inner.
The opposite direction is trivial by the central triviality of a modular endomorphism group.
\end{proof}

\begin{rema}
Finally, we remark that by our results and the result of Masuda \cite{M}, if we admit that the AFD factors are completely classified by their flows of weights, it is possible to classify the actions of discrete amenable groups on the AFD factors without separating cases by the types of the factors.
\end{rema}

\end{document}